\numberwithin{equation}{section}
\newtheorem{theorem}[equation]{Theorem}
\newtheorem{thm}{Theorem}
\theoremstyle{plain}
\newtheorem{lemma}[equation]{Lemma}
\newtheorem{proposition}[equation]{Proposition}
\newtheorem{definition}[equation]{Definition}
\newtheorem{corollary}[equation]{Corollary}
\newtheorem*{corollary*}{Corollary}
\newtheorem{remark}[equation]{Remark}
\def\GL{\mathrm{GL}}
\def\GSp{\mathrm{GSp}}
\def\Sp{\mathrm{Sp}}
\def\Stab{\mathrm{Stab}}
\def\GU{\mathrm{GU}}
\def\Nilp{\mathrm{Nilp}}
\def\det{\mathrm{det}}
\def\ord{\mathrm{ord}}
\def\Lie{\mathrm{Lie}}
\def\inv{\mathrm{inv}}
\def\Sets{\mathrm{Sets}}
\def\Ker{\mathrm{Ker}}
\def\Nrd{\mathrm{Nrd}}
\def\Trd{\mathrm{Trd}}
\DeclareMathOperator{\End}{End}
\DeclareMathOperator{\Adm}{Adm}
\def\calL{\mathcal{L}}
\def\calN{\mathcal{N}}
\def\calM{\mathcal{M}}
\def\calO{\mathcal{O}}
\def\gothS{\mathfrak{S}}
\def\CC{\mathbb{C}}
\def\DD{\mathbb{D}}
\def\FF{\mathbb{F}}
\def\GG{\mathbb{G}}
\def\PP{\mathbb{P}}
\def\QQ{\mathbb{Q}}
\def\RR{\mathbb{R}}
\def\XX{\mathbb{X}}
\def\ZZ{\mathbb{Z}}
\newcommand{\Iw}{\mathrm{Iw}}
\newcommand{\Spf}{\mathrm{Spf}}
\newcommand{\Dieu}{Dieudonn\'{e}}
\newcommand{\Sh}{\gothS h}
\newcommand{\der}{\mathrm{der}}
\author{Haining Wang}
\address{\parbox{\linewidth} {Haining Wang\\ Department of Mathematics,\\ McGill University,\\ 805 Sherbrooke St W,\\ Montreal, QC H3A 0B9, Canada.~ }}
\email{wanghaining1121@outlook.com}
\subjclass[2000]{Primary 11G18, Secondary 20G25}
\date{\today}
\begin{document}
\title{On quaternionic unitary Rapoport-Zink spaces with parahoric level structures}

\keywords{\emph{Shimura varieties, Rapoport-Zink spaes, affine Deligne-Lusztig varieties}}

\begin{abstract}
In this article, we describe the underlying reduced scheme of a quaternionic unitary Rapoport-Zink space with Iwahori level structure.  In a previous work, we have studied the quaternionic unitary Rapoport-Zink space with a special maximal parahoric level structure. We will describe the morphism between these two Rapoport-Zink spaces and the fiber of this morphism. As an application of this result, we describe the the quaternionic unitary Rapoport-Zink space with any parahoric level structure. 
\end{abstract}

\maketitle

\tableofcontents
\section{Introduction}
This article contributes to the problem of explicitly describing the supersingular locus in the reduction modulo an odd prime $p$ of an integral model of a Shimura variety. This work is a natural continuation of \cite{Wang-2} where we studied the Shimura variety and Rapoport-Zink space 
related to the group $\GU_{B}(2)$ where $B$ is an indefinite quaternion algebra over $\QQ$ with a maximal parahoric level structure. This parahoric level structure corresponds to the fact that the Shimura variety or the Rapoport-Zink space classifies abelian varieties or $p$-divisible groups with principal polarizations. In this article we are concerned with the Shimura variety and the Rapoport-Zink space for the same group but with the Iwahori level structure. As an application, we can extend our descriptions to arbitrary parahoric level structures. There are a few cases where one knows how to describe the supersingular locus of a Shimura variety with Iwahori level structure in an explicit way. We mention the following results. 

\begin{enumerate}
\item In \cite{Stam97}, the author treats the case corresponding to $\GL_{2}/F$ where $F$ is a real quadratic field. This is the case of a  Hilbert modular surface with Iwahori level structure.
\item In \cite{GY12}, the authors treat the case $\GSp(2g)$ for general $g$. This is the case of Siegel modular varieties with Iwahori level structure. In this case, the authors can only describe parts of the supersingular locus namely those Kottwitz-Rapoport strata that are completely contained in the supersingular locus. The case of $\GSp(4)$ with Iwahori level structure is special and the supersingular locus can be completely described. This is done in \cite{Yu08}. 

\item In \cite{GHN16}, the authors introduce the concept of a fully Hodge-Newton decomposable pair $(G, \mu)$ where $G$ is a quasi-simple reductive group over a non-archimedean local field and $\mu$ is a miniscule cocharacter of $G$ over the algebraic closure $\bar{F}$. If $(G, \mu)$ arises from a local Shimura datum, then one can expect to describe the reduced scheme of the corresponding Rapoport-Zink space with any parahoric level structure in terms of classical Deligne-Lusztig varieties. The quaternionic unitrary case studied in this article gives such a fully Hodge-Newton decomposable pair. 
\end{enumerate}

In \cite{Stam97}, it is shown that the supersingular locus of the Hilbert modular surface with Iwahori level structure is purely $2$-dimensional and therefore  consists of irreducible components of the whole Shimura variety. In fact one can classify all the Shimura varieties with parahoric level structure that a similar phenomenon occur. This result is obtained in the recent preprint of \cite{GHR19} using purely group theoretic method.  We will show below that the supersingular locus in our case is not quite equidimensional: in addition to irreducible components of dimension $3$, there are also irreducible components of dimension $2$. Although our results and methods don't rely directly on that of \cite{GHR19}, we are indeed inspired by their results and computations. Moreover we also include a section that compare our results with the predictions in \cite{GHN16} and \cite{GHR19} using purely group theoretic method. 

Finally we mention that the group $\GU_{B}(2)$ is a non-trivial inner form of the split symplectic group $\GSp(4)$. The relation between our Shimura variety and the classical Siegel threefold should resemble the relation between the Shimura curve associated to an indefinite quaternion algebra and the classical modular curve. Therefore our results should have many interesting arithmetic applications and we will report one application in \cite{Wang-pre}. 

\subsection{Results on Rapoport-Zink space} We describe our results in more details. Let $D$ be the quaternion division algebra over $\QQ_{p}$. Let $\mathcal{O}_{D}$ be a maximal order stable under a neben involution $*$ of $D$. We denote by $\FF$ an algebraic closure of $\FF_{p}$ and by $W_{0}=W(\FF)$ the Witt vectors of $\FF$. Let $S$ be a $W_{0}$-scheme on which $p$ is locally nilpotent and $\bar{S}$ the closed subscheme defined by the ideal sheaf $p\mathcal{O}_{S}$. We fix a principally polarized $p$-divisible group $\XX$ over $\FF$ which is isoclinic of slope $1/2$ equipped with an action $\iota: \mathcal{O}_{D}\rightarrow \End(\XX)$. We consider the set valued functor $\mathcal{N}$ on those $S$-points above parametrizing the following data up to isomorphism.
\begin{enumerate}
\item $X_{0}$ is a $p$-divisible group over $S$ of dimension $4$.
\item $\iota_{0}: \mathcal{O}_{D}\rightarrow \End{X_{0}}$ is a $\mathcal{O}_{D}$-action on $X$.
\item $\lambda_{0}: X_{0}\rightarrow X_{0}^{\vee}$ is a principal polarization.
\item $\rho_{0}: X_{0}\times_{S} \bar{S}\rightarrow \XX\times_{\FF} \bar{S}$ is a quasi-isogeny. 
\end{enumerate}
The quadruple $(X_{0}, \iota_{0}, \lambda_{0}, \rho_{0})$ are required to satisfy certain additional conditions which we will refer the reader to Definition \ref{RZ-functor} for a precise definition. This functor is representable by a formal scheme locally formally of finite type. It decomposes into connected components $\mathcal{N}(i)$ for $i\in \ZZ$ according to the height of the quasi-isogeny $\rho_{0}$. Let $\mathcal{M}$ be the reduced scheme of the formal scheme $\mathcal{N}(0)$. The structure of this scheme $\mathcal{M}$ is studied using the Bruhat-Tits stratification in \cite{Wang-2}. In this article we are mainly concerned with the following variant of the above functor which we call the \emph{quaternionic unitary Rapoport-Zink space with Iwahori level structure}. We denote this functor by $\mathcal{N}_{\Iw}$ and its $S$-valued points is given by the isomorphism classes of the following data
\begin{enumerate}
\item $X_{i}$ for $i=-1, 0, 1$ is a $p$-divisible group over $S$ of dimension $4$ with $\mathcal{O}_{D}$-linear isogenies 
\begin{equation*}
X_{-1}\rightarrow X_{0}\rightarrow X_{1}.
\end{equation*}
\item $\iota_{i}: \mathcal{O}_{D}\rightarrow \End{X_{i}}$ is a $\mathcal{O}_{D}$-action on $X_{i}$ for $i=-1,0,1$.
\item $\lambda_{i}: X_{i}\rightarrow X^{\vee}_{-i}$ is an isomorphism.
\item $\rho_{i}: X_{i}\times_{S} \bar{S}\rightarrow \XX\times_{\FF} \bar{S}$ is a quasi-isogeny.  
\end{enumerate}
We refer the reader to Definition \ref{RZ-functor} for a precise and uniform way of defining these two moduli problems. Our main results concern the geometry of $\calM_{\Iw}$ defined in a completely similar way as $\calM$. We denote by $\pi_{\{1\}}: \calM_{\Iw}\rightarrow \calM$ the natural map. We first recall the main result of \cite{Wang-2} which describes the scheme $\calM$. 

\begin{thm}[\cite{Wang-2}]The scheme $\calM$ is pure of dimension $2$ and can be decomposed into 
$$\calM=\calM^{\circ}_{\{0\}}\sqcup\calM^{\circ}_{\{2\}}\sqcup\calM^{\circ}_{\{02\}}\sqcup\calM_{\{1\}}$$\
which is called the Bruhat-Tits stratification. 
\begin{enumerate}
\item Let $\calM_{\{0\}}$ be the closure of $\calM^{\circ}_{\{0\}}$. We denote by $\calM_{L_{0}}$ an irreducible component of $\calM_{\{0\}}$, then $\calM_{L_{0}}$ is isomorphic to a projective surface give by  
$$x^{p}_{3}x_{0}-x^{p}_{0}x_{3}+x^{p}_{2}x_{1}-x^{p}_{1}x_{2}=0.$$ 
Let $\calM_{\{2\}}$ be the closure of $\calM^{\circ}_{\{2\}}$. It is isomorphic to $\calM_{\{0\}}$ and an irreducible component of it is  denoted by $\calM_{L_{2}}$ which is isomorphic to the same surface as above.
\item An irreducible component of the closure $\calM_{\{02\}}$ of $\calM^{\circ}_{\{02\}}$ is denoted by $\calM_{L_{0}, L_{2}}$ and it is isomorphic to a projective line $\PP^{1}$.
\item The scheme $\calM_{\{1\}}$ consists of finitely many $\FF$-points which are superspecial.
\end{enumerate}
\end{thm}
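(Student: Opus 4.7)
The plan is to describe $\calM$ via Dieudonné theory by passing to the rational Dieudonné module and identifying $\calM(\FF)$ with an affine Deligne-Lusztig variety attached to the local Shimura datum for $\GU_B(2)$. Let $N = M(\XX)[1/p]$ carry its Frobenius $F$, the induced $\calO_D$-action, and the symplectic form coming from the principal polarization on $\XX$. Then $\calM(\FF)$ corresponds to the set of $\calO_D$-stable self-dual $W_0$-lattices $M \subset N$ satisfying the Kottwitz condition and the minuscule relation $pM \subset FM \subset M$ appropriate to slope $1/2$.

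The scheme structure comes from a Bruhat-Tits stratification. A vertex lattice is an $\calO_D$-stable lattice $L \subset N$ with $L \subset L^\vee$ and $L^\vee/L$ of a prescribed type, indexed by subsets of the local Dynkin diagram of the inner form $J$ of self-quasi-isogenies of $\XX$. For our group this diagram is of type $\tilde C_2$ with vertices $\{0\}, \{1\}, \{2\}$, and to each vertex lattice $L$ of type $t$ one attaches a locally closed subscheme $\calM^\circ_L \subset \calM$ classifying Dieudonné lattices with prescribed relative position to $L$. Letting $L$ vary produces the four strata in the theorem, where the intersection stratum $\calM^\circ_{\{02\}}$ is indexed by pairs $(L_0, L_2)$ with $L_0 \subset L_2$.

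For a fixed $L$ of type $\{0\}$ or $\{2\}$, the closure $\calM_L$ embeds into the projectivization $\PP(L^\vee/L)$ and is realized as the Deligne-Lusztig variety attached to the longest element of the Weyl group of the rank-$2$ reductive quotient of the parahoric stabilizer. A direct coordinate computation on the four-dimensional quotient $L^\vee/L$, using the induced form and the $\sigma$-linear Frobenius, produces the equation $x^p_3 x_0 - x^p_0 x_3 + x^p_2 x_1 - x^p_1 x_2 = 0$ in $\PP^3$; geometrically this expresses that a line $\ell \subset L^\vee/L$ is orthogonal to its Frobenius twist $\ell^{(p)}$. Similarly $\calM_{L_0, L_2}$ is the DL variety for a simple reflection and becomes a $\PP^1$, while the self-dual vertex type $\{1\}$ yields zero-dimensional (superspecial) DL varieties.

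The main technical obstacle is verifying the explicit surface equation: one must choose a basis of $L^\vee/L$ respecting the $\calO_D$-structure, compute the induced pairing, check that the moduli condition $pL^\vee \subset FM \subset M \subset L^\vee$ together with compatibility with the $\calO_D$-action translates into the vanishing of precisely the pairing $\langle \ell, \ell^{(p)} \rangle$, and read off the displayed polynomial. Purity of dimension $2$ then follows since $\calM^\circ_{\{0\}}$ and $\calM^\circ_{\{2\}}$ are $2$-dimensional, while the remaining strata occur only in their closures.
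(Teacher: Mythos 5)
This theorem is not proved in the present paper at all: it is imported from \cite{Wang-2}, and Section 2 of this paper only reviews the ingredients (the lattice model of $\calM(\FF)$, vertex lattices, the Bruhat--Tits strata). Measured against that material, your outline follows essentially the same route --- \Dieu{} theory, vertex lattices indexed by the $\tilde C_2$ local Dynkin diagram, closed strata identified with explicit Deligne--Lusztig-type varieties, the surface equation coming from $\langle \ell, \ell^{(p)}\rangle=0$ --- so in spirit you are reconstructing the argument of \cite{Wang-2} rather than inventing a new one.

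Two concrete points, however, are either missing or wrong as stated. First, you define vertex lattices as $\calO_{D}$-stable lattices $L\subset N$ with $L\subset L^{\vee}$; in the actual setup one must first split $N=N_{0}\oplus N_{1}$ via the $\ZZ_{p^{2}}$-action, set $\tau=\Pi^{-1}F$, and work with the $4$-dimensional $\QQ_{p}$-symplectic space $C=N_{0}^{\tau=1}$ (so that $J_{b}\cong\GSp(C)$); vertex lattices of types $0,1,2$ live in $C$, and it is this reduction that produces the $\tilde C_{2}$ building and the three types. Related to this, your sketch never explains why the four strata exhaust $\calM$: the exhaustion rests on the key bound $\dim_{\FF}(M_{0}+\tau(M_{0}))/M_{0}\leq 1$ (Lemma \ref{spin} here), which guarantees that $M_{0}+\tau(M_{0})$ resp.\ $M_{0}\cap\tau(M_{0})$ produces a vertex lattice when $M_{0}$ is not itself $\tau$-invariant; without it "letting $L$ vary produces the four strata" is an assertion, not an argument, and purity of dimension $2$ (which needs the closure relations putting $\calM^{\circ}_{\{02\}}$ and $\calM_{\{1\}}$ inside the closures of the $2$-dimensional strata) is not yet established. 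Second, the identification of $\calM_{L_{0}}$ with the Deligne--Lusztig variety "attached to the longest element" of the Weyl group of the reductive quotient is incorrect: for $\Sp_{4}$ the longest element has length $4$, whereas the stratum is $2$-dimensional; this is a basic locus of Coxeter type, and the relevant elements have length $2$. The mislabel does not affect your coordinate computation of the hypersurface $x^{p}_{3}x_{0}-x^{p}_{0}x_{3}+x^{p}_{2}x_{1}-x^{p}_{1}x_{2}=0$, which is indeed the heart of the matter, but it should be corrected.
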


The following theorem describes the structure of the space $\calM_{\Iw}$ and the proof relies heavily on the method used in \cite{Wang-2}.

\begin{thm}\label{thm2}
The scheme $\calM_{\Iw}$ is not equidimensional and it admits a decomposition of the form
$$\calM_{\Iw}=\mathcal{Y}_{\{0\}}\sqcup \mathcal{Y}_{\{2\}}\sqcup \mathcal{Y}_{\{02\}}\sqcup \mathcal{Y}_{\{1\}}$$
\begin{enumerate}
\item The scheme $\mathcal{Y}_{\{0\}}=\pi_{\{1\}}^{-1}(\calM^{\circ}_{\{0\}})$ is a $\PP^{1}$-bundle over $\calM^{\circ}_{\{0\}}$.
\item The scheme $\mathcal{Y}_{\{2\}}=\pi_{\{1\}}^{-1}(\calM^{\circ}_{\{2\}})$ is a $\PP^{1}$-bundle over $\calM^{\circ}_{\{2\}}$.
\item The scheme $\mathcal{Y}_{\{02\}}=\pi_{\{1\}}^{-1}(\calM^{\circ}_{\{2\}})$ is an $E$-bundle over $\calM^{\circ}_{\{2\}}$ where $E$ is the transverse intersection of two $\PP^{1}$ at a point.
\item The scheme $\mathcal{Y}_{\{1\}}=\pi_{\{1\}}^{-1}(\calM_{\{1\}})$ is a $\PP^{1}\times \PP^{1}$-bundle over $\calM_{\{1\}}$. 
\end{enumerate}
The scheme $\calM_{\Iw}$ has three types of irreducible components. We denote them by $\overline{\mathcal{Y}}_{L_{0}}$, $\overline{\mathcal{Y}}_{L_{2}}$ and ${\mathcal{Y}}_{L_{1}}$ respectively. 
\begin{enumerate}
\item $\overline{\mathcal{Y}}_{L_{0}}$, $\overline{\mathcal{Y}}_{L_{2}}$ are $\PP^{1}$-bundles over the surfaces $\mathcal{M}_{L_{0}}$ and $\mathcal{M}_{L_{2}}$ which are three dimensional
\item ${\mathcal{Y}}_{L_{1}}$ is isomorphic to  $\PP^{1}\times \PP^{1}$ which is two dimensional.
\end{enumerate}
\end{thm}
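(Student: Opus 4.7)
The strategy is to analyze the natural forgetful map $\pi_{\{1\}}: \calM_{\Iw} \to \calM$ fiber-by-fiber via covariant \Dieu{} theory, adapting the lattice-chain techniques already developed in \cite{Wang-2}. After fixing the rational \Dieu{} module $N$ of $\XX$, the space $\calM_{\Iw}$ is identified with a moduli of chains $M_{-1} \subset M_0 \subset M_1$ of $\mathcal{O}_{D}$-stable lattices in $N$ satisfying the polarization-induced duality $M_{-1}^{\vee} = M_1$ together with the usual Frobenius and Verschiebung conditions, while $\calM$ parametrizes just the self-dual middle lattice $M_0$. Since $M_1$ is determined by $M_{-1}$ via duality, the fiber of $\pi_{\{1\}}$ over $M_0$ reduces to the moduli of admissible $\mathcal{O}_{D}$-stable sublattices $M_{-1} \subset M_0$ of the prescribed colength.

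With this translation in hand, the plan is to compute the fibers stratum by stratum of $\calM$, using the explicit descriptions of the universal $M_0$ on each Bruhat-Tits stratum recalled in the preceding theorem. Over the open stratum $\calM^{\circ}_{\{0\}}$, the Frobenius condition on $M_{-1}$ cuts out a one-parameter family of admissible choices, producing a $\PP^{1}$; the stratum $\calM^{\circ}_{\{2\}}$ is symmetric. Over the edge stratum $\calM^{\circ}_{\{02\}}$, the conditions imposed simultaneously by the two adjacent vertex-type lattices force $M_{-1}$ to lie on one of two $\PP^{1}$'s, one inherited from each end of the edge, and these two $\PP^{1}$'s share exactly one common lattice, giving the transverse union $E$. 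At a superspecial point of $\calM_{\{1\}}$, the Frobenius acts semisimply on the relevant two-dimensional subspace and the choices of neighbors of type $0$ and type $2$ decouple, yielding $\PP^{1} \times \PP^{1}$.

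To promote these pointwise fiber descriptions to the global bundle structures claimed in the theorem, I would work with the universal object over each $\calM^{\circ}_{\{i\}}$ and trivialize the sublattice moduli Zariski- or \'etale-locally, using the explicit equations and coordinates available from \cite{Wang-2}. The irreducible components then follow by taking closures: the three-dimensional components $\overline{\mathcal{Y}}_{L_{0}}$ and $\overline{\mathcal{Y}}_{L_{2}}$ arise as closures of the $\PP^{1}$-bundles over the surfaces $\mathcal{M}_{L_{0}}$ and $\mathcal{M}_{L_{2}}$, while the two-dimensional components $\mathcal{Y}_{L_{1}} \cong \PP^{1} \times \PP^{1}$ appear as the fibers over the isolated superspecial points of $\calM_{\{1\}}$. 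A dimension count rules out the latter being absorbed into the closures of the former, which is the source of the failure of equidimensionality.

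The main technical obstacle I anticipate is the precise identification of the fiber geometry over the smaller strata $\calM^{\circ}_{\{02\}}$ and $\calM_{\{1\}}$: verifying that the two $\PP^{1}$'s meet transversally at an edge point, and that the two neighbor choices genuinely decouple into a product at a superspecial point, requires a careful bookkeeping of the Frobenius action on $N$ together with the $\mathcal{O}_{D}$-structure on $M_0$. Once this local analysis is pinned down, the assembly of the bundle structure, the dimension computation, and the listing of irreducible components should follow from a routine gluing argument combined with the explicit surface equations recalled from \cite{Wang-2}.
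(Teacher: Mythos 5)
Your overall strategy is the same as the paper's: identify $\calM_{\Iw}(\FF)$ with chains of $\calO_{D}$-stable {\Dieu} lattices, note that the duality $M_{1}=M_{-1}^{\vee}$ reduces the fiber of $\pi_{\{1\}}$ over $M_{0}$ to the choice of one intermediate lattice (equivalently, after the $\ZZ_{p^{2}}$-decomposition, to the pair $(S_{0},T_{0})$ the paper works with), compute this fiber stratum by stratum in the Bruhat-Tits stratification, and then globalize and take closures. However, there is a genuine gap in your final step. The claim that ``a dimension count rules out the latter being absorbed into the closures of the former'' is not valid: a $2$-dimensional closed subvariety can perfectly well be contained in the closure of a $3$-dimensional stratum, and upper semicontinuity of fiber dimension allows $\overline{\mathcal{Y}}_{\{0\}}$ a priori to have a $2$-dimensional fiber over a boundary point of $\calM_{\{1\}}$, i.e.\ to swallow the whole $\PP^{1}\times\PP^{1}$ sitting over a superspecial point. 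What is actually needed --- and what the paper proves --- is that the closure $\overline{\mathcal{Y}}_{L_{0}}$ (resp.\ $\overline{\mathcal{Y}}_{L_{2}}$) is still a $\PP^{1}$-bundle over the \emph{closed} stratum $\calM_{L_{0}}$ (resp.\ $\calM_{L_{2}}$), so that its trace on the fiber over a superspecial point is only a $\PP^{1}$; the explicit intersection computation of Corollary \ref{main-cor} then shows $\mathcal{Y}_{L_{1}}$ is not contained in any such closure and hence is a separate ($2$-dimensional) irreducible component. Without this, the non-equidimensionality statement is unproved.

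A second, softer issue is the globalization step. A bijection on $\FF$-points of each fiber does not by itself produce the claimed bundle (scheme) structures, and ``trivializing the sublattice moduli locally'' still requires a comparison morphism. The paper handles this by defining explicit morphisms from $\mathcal{Y}_{\{i\}}$ to candidate $\PP^{1}$- or $\PP^{1}\times\PP^{1}$-bundles built from the $0$-component of the universal {\Dieu} crystal, upgrading the lattice computation from $\FF$ to arbitrary field-valued points via the theory of windows (Remark \ref{windremark}), and then invoking Zariski's main theorem (properness, bijectivity on points, normality of the target), as in Theorems \ref{YL1}, \ref{YL2}, \ref{YL0} and \ref{YL02}. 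Your sketch would need an equivalent mechanism. Relatedly, the statements you treat as immediate on the open strata --- that over $\calM^{\circ}_{\{0\}}$ and $\calM^{\circ}_{\{2\}}$ one component of $M_{-1}$ is free and the other is \emph{uniquely} determined, and that over $\calM^{\circ}_{\{02\}}$ this fails at exactly one point --- are the technical core of the paper (the case analyses of Lemmas \ref{fiber2}, \ref{fiber0}, \ref{fiber02}, resting on the dichotomy of Lemma \ref{BT-description} and the bound of Lemma \ref{spin}); in a complete write-up these must be carried out, not just asserted.
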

We provide an informal discussion of the method proving the above theorem. We will construct two $\PP^{1}$-bundles over $\calM$ using the data coming from $\calM_{\Iw}$. Over the stratum $\calM_{\{1\}}$, the two $\PP^{1}$-bundles will not interfere with each other. When we move to $\calM^{\circ}_{\{02\}}$, one $\PP^{1}$-bundle will determine the other one  except at one point and this gives the transverse intersection of two $\PP^{1}$ at a point. Over $\calM^{\circ}_{\{0\}}$ and $\calM^{\circ}_{\{2\}}$, one $\PP^{1}$-bundle will be completely determined by the other one. The following theorem describes the intersections of different types of irreducible components.
\begin{thm}\label{thm3}
The schemes $\overline{\mathcal{Y}}_{L_{0}}$, $\overline{\mathcal{Y}}_{L_{2}}$ and ${\mathcal{Y}}_{L_{1}}$ intersect with each other in the following way. 
\begin{enumerate}
\item The intersection of $\overline{\mathcal{Y}}_{L_{0}}$ and $\overline{\mathcal{Y}}_{L_{2}}$ is one dimensional and is equal to $\mathcal{M}_{L_{0},L_{2}}$.
\item The intersection of $\overline{\mathcal{Y}}_{L_{0}}$ and ${\mathcal{Y}}_{L_{1}}$ is one dimensional and is equal to a $\PP^{1}$-bundle over $\calM_{L_{1}}$.
\item The intersection of $\overline{\mathcal{Y}}_{L_{2}}$ and ${\mathcal{Y}}_{L_{1}}$ is one dimensional and is equal to a $\PP^{1}$-bundle over $\calM_{L_{1}}$.
\end{enumerate}
\end{thm}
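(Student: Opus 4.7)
The plan is to analyze each intersection stratum-by-stratum along the Bruhat-Tits stratification of $\calM$ and to exploit the fibration structure of $\calM_{\Iw}$ over $\calM$ provided by Theorem~\ref{thm2}. In each case I would pull the two components back to a given stratum of $\calM$, identify their restrictions using the Dieudonn\'e-lattice description of $\calM_{\Iw}$ developed in the body of the paper, and then take closures.

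For part~(1), the map $\pi_{\{1\}}$ sends $\overline{\mathcal{Y}}_{L_{0}}$ into $\calM_{L_{0}}$ and $\overline{\mathcal{Y}}_{L_{2}}$ into $\calM_{L_{2}}$, so the intersection lies above $\calM_{L_{0}} \cap \calM_{L_{2}}$, which by Theorem~\ref{thm2} is supported on $\calM^{\circ}_{\{02\}} \sqcup \calM_{\{1\}}$. Over $\calM^{\circ}_{\{02\}}$ the $E$-bundle structure of $\mathcal{Y}_{\{02\}}$, with $E$ a union of two transverse $\PP^{1}$'s, decomposes fiberwise into its two $\PP^{1}$-components; I would show that these two components coincide with the restrictions of $\overline{\mathcal{Y}}_{L_{0}}$ and $\overline{\mathcal{Y}}_{L_{2}}$, so that the intersection restricts to the section of $\mathcal{Y}_{\{02\}} \to \calM^{\circ}_{\{02\}}$ cut out by the transverse point of $E$. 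Over each superspecial point of $\calM_{\{1\}}$ the fiber is $\PP^{1} \times \PP^{1}$, and a similar computation identifies the two restrictions with the two rulings, which meet in a single point. Taking closures then produces the $\PP^{1}$ identified with $\calM_{L_{0}, L_{2}}$.

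For parts (2) and (3), $\calM_{L_{1}}$ is a single superspecial $\FF$-point, which lies in the closures $\calM_{L_{0}}$ and $\calM_{L_{2}}$. Hence $\overline{\mathcal{Y}}_{L_{0}} \cap \mathcal{Y}_{L_{1}}$ is the fiber over $L_{1}$ of the $\PP^{1}$-bundle $\overline{\mathcal{Y}}_{L_{0}} \to \calM_{L_{0}}$, sitting inside the fiber $\mathcal{Y}_{L_{1}} \cong \PP^{1} \times \PP^{1}$. A direct lattice-chain computation should identify this $\PP^{1}$ with one of the two rulings of $\PP^{1}\times \PP^{1}$; the analogous argument handles $\overline{\mathcal{Y}}_{L_{2}}$, with the other ruling, giving a picture consistent with part~(1).

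The main obstacle is the fiberwise identification through Dieudonn\'e modules: one must check carefully, using the moduli description of $\calM_{\Iw}$ in terms of chains $X_{-1} \to X_{0} \to X_{1}$, that varying $X_{1}$ (respectively $X_{-1}$) while keeping $X_{0}$ fixed parametrizes exactly one $\PP^{1}$-factor in each fiber, and that these two parametrizations realize the restrictions of $\overline{\mathcal{Y}}_{L_{0}}$ and $\overline{\mathcal{Y}}_{L_{2}}$ uniformly across all Bruhat-Tits strata. Once this local identification is in hand, the three intersection statements follow formally from the bundle structures recorded in Theorem~\ref{thm2} together with the classification of irreducible components of $\calM$ from \cite{Wang-2}.
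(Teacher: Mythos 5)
Your proposal is correct and follows essentially the same route as the paper: both reduce to the pointwise Dieudonn\'e-lattice description of $\calM_{\Iw}$ over the Bruhat--Tits strata of $\calM$ together with the bundle structures of Theorems \ref{YL1}, \ref{YL2}, \ref{YL0} and \ref{YL02}, and then intersect the closures stratum by stratum. The ruling identification over a superspecial point, which the paper leaves implicit, does hold and is the key point for equality in part (1): the web conditions (plus irreducibility of the fibres) force $T_{0}=M_{0}+\tau(M_{0})=L_{0,W_{0}}$ identically on $\mathcal{Y}_{L_{0}}$ and $S_{0}=M_{0}\cap\tau(M_{0})=L_{2,W_{0}}$ identically on $\mathcal{Y}_{L_{2}}$, hence on their closures, so over a superspecial point the two fibres are the two rulings and meet only in $(L_{2,W_{0}},L_{0,W_{0}})$.
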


As an application of the above results, we consider the following quaternionic unitary Rapoport-Zink space $\calN_{P}$. The set $\calN_{P}(S)$ classifies the isomorphism classes of the quadruple $(X_{i}, \lambda_{i}, \iota_{i}, \rho_{i})$ where
\begin{enumerate}
\item $X_{i}$ for $i=-1, 1$ is a $p$-divisible group over $S$ of dimension $4$ with $\mathcal{O}_{D}$-linear isogeny
 \begin{equation*}
 X_{-1}\rightarrow X_{1};
 \end{equation*}
\item $\iota_{i}: \mathcal{O}_{D}\rightarrow \End{X_{i}}$ is a $\mathcal{O}_{D}$-action on $X_{i}$ for $i=-1,1$;
\item $\lambda_{i}: X_{i}\rightarrow X^{\vee}_{-i}$ is an isomorphism for $i=-1,1$;
\item $\rho_{i}: X_{i}\times_{S} \bar{S}\rightarrow \XX\times_{\FF} \bar{S}$ is a quasi-isogeny for $i=-1,1$.  
\end{enumerate}
We will refer to this Rapoport-Zink space as the \emph{quaternionic unitary Rapoport-Zink space with Siegel parahoric level structure}.  Note that $\calN_{\Iw}$ can be viewed as a correspondence between $\calN$ and $\calN_{P}$. This means there is a natural diagram 
\begin{equation}\label{corres}
\begin{tikzcd}
& &\calN_{\Iw}\arrow{rd}{\pi_{\{02\}}} \arrow{ld}[swap]{\pi_{\{1\}}}\\
&\calN&  &\calN_{P}.\\ 
\end{tikzcd}
\end{equation}
Let $\calM_{P}$ be the underlying reduced scheme of $\calN_{P}(0)$ defined similarly as above. As an immediate application of Theorem \ref{thm2} and Theorem \ref{thm3} is the following description of the scheme $\calM_{P}$
\begin{thm}
The scheme $\calM_{P}$ is pure of dimension $2$. Moreover we have
\begin{equation}
\calM_{P}=\bigsqcup_{x\in \calM_{\{1\}}}(\PP^{1}\times\PP^{1})_{x}.
\end{equation} 
\end{thm}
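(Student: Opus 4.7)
The plan is to exploit the correspondence diagram \eqref{corres} and extract the structure of $\calM_P$ from the explicit description of $\calM_{\Iw}$ provided by Theorems \ref{thm2} and \ref{thm3}. The projection $\pi_{\{02\}}: \calM_{\Iw}\to \calM_P$ forgets the intermediate $p$-divisible group $X_0$ in the chain $X_{-1}\to X_0\to X_1$. First I would verify surjectivity of $\pi_{\{02\}}$ on $\FF$-points: for any pair $(X_{-1}, X_1)$ in $\calM_P$, an intermediate $X_0$ can be constructed from a type-$\{1\}$ Dieudonné lattice interpolated between the lattices of $X_{-1}$ and $X_1$ in the Bruhat-Tits building. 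Hence $\calM_P$ is the scheme-theoretic image of $\pi_{\{02\}}$, and it suffices to determine the image of each irreducible component of $\calM_{\Iw}$.

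On each two-dimensional component $\mathcal{Y}_{L_1(x)}\cong \PP^1\times \PP^1$, indexed by a superspecial point $x\in \calM_{\{1\}}$, the middle object $X_0^{x}$ is rigidly fixed by $x$, so $\pi_{\{02\}}$ simply drops this rigid datum. A Dieudonné-module computation, analogous to the ones underlying Theorem \ref{thm2}, should show that $\pi_{\{02\}}|_{\mathcal{Y}_{L_1(x)}}$ is a closed immersion realising each $\mathcal{Y}_{L_1(x)}$ as a $\PP^1\times \PP^1$ component in $\calM_P$. On the three-dimensional components $\overline{\mathcal{Y}}_{L_0}$ and $\overline{\mathcal{Y}}_{L_2}$, the projection $\pi_{\{02\}}$ must contract positive-dimensional fibers because $\dim \calM_P\leq 2$. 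Using Theorem \ref{thm3}---in particular that $\overline{\mathcal{Y}}_{L_j}\cap \mathcal{Y}_{L_1}$ is a $\PP^1$-bundle over $\calM_{L_1}$---and tracking the Dieudonné lattice of $X_0$ on each stratum, one argues that the image of each three-dimensional component lies inside $\bigcup_{x}\pi_{\{02\}}(\mathcal{Y}_{L_1(x)})$, with the contracting $\PP^1$-fiber being precisely the $\PP^1$-fiber of type-$\{0\}$ (respectively type-$\{2\}$) lattices adjacent to the given $(M_{-1}, M_1)$.

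Disjointness of the $\PP^1\times \PP^1$ components for distinct superspecial points $x\neq x'\in \calM_{\{1\}}$ then follows from the fact that a pair $(X_{-1}, X_1)$ singles out a unique middle type-$\{1\}$ Dieudonné lattice as the unique vertex of type $\{1\}$ in the corresponding edge of the Bruhat-Tits building, so $(X_{-1}, X_1)$ cannot lie in the image of two different $\mathcal{Y}_{L_1(x)}$. Combining the three steps yields $\calM_P=\bigsqcup_{x\in \calM_{\{1\}}}(\PP^1\times \PP^1)_x$, and in particular $\calM_P$ is pure of dimension $2$.

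The main obstacle lies in the second step: rigorously verifying that the images of the three-dimensional components $\overline{\mathcal{Y}}_{L_0}$ and $\overline{\mathcal{Y}}_{L_2}$ are entirely absorbed into the union of the $\PP^1\times \PP^1$'s coming from the superspecial locus, and that they contribute no new two-dimensional components to $\calM_P$. This requires a stratum-by-stratum Dieudonné-module analysis on $\mathcal{Y}_{\{0\}}$, $\mathcal{Y}_{\{2\}}$, and $\mathcal{Y}_{\{02\}}$ to compute the fibers of $\pi_{\{02\}}$ explicitly and confirm the contraction picture described above.
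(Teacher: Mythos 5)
You follow the same route as the paper: feed the structure results for $\calM_{\Iw}$ (Theorems \ref{thm2} and \ref{thm3}) through the correspondence \eqref{corres}, let the images of the fibres $\mathcal{Y}_{L_{1}}\cong\PP^{1}\times\PP^{1}$ over the superspecial points produce the components of $\calM_{P}$, and argue that the three-dimensional components $\overline{\mathcal{Y}}_{L_{0}}$, $\overline{\mathcal{Y}}_{L_{2}}$ are swallowed. The difficulty is that the step on which the whole theorem rests is exactly the one you defer. Showing that $\pi_{\{02\}}(\overline{\mathcal{Y}}_{L_{0}})$ and $\pi_{\{02\}}(\overline{\mathcal{Y}}_{L_{2}})$ are contained in the images of the $\mathcal{Y}_{L_{1}}$ is the actual content of the paper's proof: it is read off from the explicit lattice descriptions of $\mathcal{Y}_{L_{0}}$, $\mathcal{Y}_{L_{2}}$, $\mathcal{Y}_{L_{0},L_{2}}$, $\mathcal{Y}_{L_{1}}$ recorded in the proof of Corollary \ref{main-cor}, from the assertion that $\pi_{\{02\}}$ contracts each lattice stratum $\calM_{L}$, and from the intersection statements of Theorem \ref{thm3} (each $\overline{\mathcal{Y}}_{L}\cap\mathcal{Y}_{L_{1}}$ is already a full $\PP^{1}$-bundle over $\calM_{L_{1}}$). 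You only promise this ``stratum-by-stratum Dieudonn\'e-module analysis'', and the one argument you do give for the contraction is circular: it invokes $\dim\calM_{P}\leq 2$, which is part of the statement being proved and is not available beforehand (a priori the fibres of $\pi_{\{02\}}$ lie in $\PP(T_{0}/S_{0})\cong\PP^{1}$, which gives a lower bound for the dimension of the image of each component, not an upper bound for $\dim\calM_{P}$). Surjectivity of $\pi_{\{02\}}$ onto $\calM_{P}$ is likewise asserted rather than proved. So, as written, the proposal reduces the theorem to precisely the computation the paper carries out, without carrying it out.

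A separate gap is the disjointness argument. A point of $\calM_{P}$ is a pair of $W_{0}$-lattices $(S_{0},T_{0})$ which in general is not $\tau$-stable, so it does not single out ``an edge of the Bruhat--Tits building'', and ``the unique type-$1$ vertex of the corresponding edge'' is not a well-defined invariant of the point. Moreover, at the points where $S_{0}$ and $T_{0}$ are both $\tau$-stable (such points occur in every fibre over a superspecial point by Lemma \ref{fiber1}, for instance all lattices obtained from $\ZZ_{p}$-lattices in $C$), each of the $p+1$ $\tau$-stable lattices intermediate between $S_{0}$ and $T_{0}$ is of the form $L_{1,W_{0}}$ for a type-$1$ vertex lattice $L_{1}$ and fits into the web \eqref{web}, so the middle term is not unique there and your uniqueness argument breaks down exactly at these special points. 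Hence it cannot, as stated, justify the asserted decomposition; a complete argument has to proceed, as the paper does, by computing the images of the strata of $\calM_{\Iw}$ under $\pi_{\{02\}}$ (with the index set $\calM_{\{1\}}$ coming from Theorem \ref{YL1}) rather than from a pointwise uniqueness statement.
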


We  remark that the set parametrizing the irreducible components of  $\calM_{P}$ corresponds to the set of superspecial points on $\calM$ which happens also to be the singular locus of $\calM$. Note this also happens for the usual Siegel modular threefold, see \cite[Theorem 4.7]{Yu06}.

\subsection{Results  on supersingular locus} Let $B$ be an indefinite quaternion algebra over $\QQ$. We assume that $B$ is equal to $D$ at the place $p$. Let $V=B\oplus B$ equipped with an alternating form $$(\cdot,\cdot): V\times V\rightarrow \QQ.$$ Let $G=\GU_{B}(V)$ be the quaternionic unitary group of $(G, (\cdot,\cdot))$. Note that $G_{\RR}$ is isomorphic to $\GSp(4)_{\RR}$ and let $h: \GG_{m,\CC}\rightarrow G_{\CC}$ be the Hodge cocharacter given by sending $z\in \GG_{m, \CC}$ to $\text{diag}(z,z,1,1)\in \GSp(4)_{\CC}$. Let $U^{p}\subset G(\mathbb{A}^{p}_{f})$ be an open compact subgroup which is sufficiently small. Let $U_{p}$ be the Iwahori subgroup or the Siegel parahoric subgroup of $G(\QQ_{p})$. In Section $5$, we will use these data to define  
integral models $\Sh_{\Iw, U^{p}}$ and $\Sh_{P, U^{p}}$ over $\ZZ_{p}$ of the quaternionic unitary Shimura varieties.  We denote by $Sh_{\Iw, U^{p}}$ the special fiber of $\Sh_{\Iw, U^{p}, W_{0}}$ and by $Sh^{ss}_{\Iw, U^{p}}$ the supersingular locus of $Sh_{\Iw, U^{p}}$.  Similarly we denote by $Sh_{P, U^{p}}$ the special fiber of $\Sh_{P, U^{p}, W_{0}}$ and by $Sh^{ss}_{P, U^{p}}$ the supersingular locus of $Sh_{P, U^{p}}$. The results of the previous subsection immediately apply to characterize $Sh^{ss}_{\Iw, U^{p}}$ and $Sh^{ss}_{P, U^{p}}$  via the Rapoport-Zink uniformization theorem \cite[Theorem 6.30]{RZ96}. 
\begin{thm}
The scheme $Sh^{ss}_{\Iw, U^{p}}$ has both $2$-dimensional components and $3$-dimensional components. The $2$-dimensional components are of the form $\PP^{1}\times \PP^{1}$ and a $3$-dimensional component is a $\PP^{1}$-bundle over the surface $x^{p}_{3}x_{0}-x^{p}_{0}x_{3}+x^{p}_{2}x_{1}-x^{p}_{1}x_{2}=0$. In particular since the dimension of $Sh_{\Iw, U^{p}}$ is $3$, there are irreducible components of it that are purely supersingular and thus the ordinary locus of $Sh_{\Iw, U^{p}}$ is not dense. 

The scheme $Sh^{ss}_{P, U^{p}}$ is purely two dimensional and an irreducible of it is isomorphic to $\PP^{1}\times\PP^{1}$. 

\end{thm}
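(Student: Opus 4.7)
The plan is to deduce the global statement from the local structure theorems (Theorem \ref{thm2}, Theorem \ref{thm3}, and the preceding description of $\calM_P$) via the Rapoport-Zink uniformization theorem of \cite{RZ96}. That theorem provides an identification, on the level of reduced special fibers, of the form
$$Sh^{ss}_{\Iw, U^p} \;\cong\; I(\QQ) \big\backslash \bigl( \calM_{\Iw} \times G(\AAA_f^p)/U^p \bigr),$$
and similarly with $\Iw$ replaced by $P$, where $I$ is the inner form of $G$ that is anisotropic modulo center at the real place and isomorphic to $J$ at $p$. First I would recall the data of this uniformization in the quaternionic unitary setting of Section $5$, making sure the integral models $\Sh_{\Iw, U^p}$ and $\Sh_{P, U^p}$ match the ones produced by the corresponding local moduli problems $\calN_{\Iw}$ and $\calN_P$ so that \cite[Theorem 6.30]{RZ96} applies directly.

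Second, I would reduce to the structure of $\calM_{\Iw}$ and $\calM_P$. For $U^p$ sufficiently small the arithmetic subgroup $I(\QQ) \cap g U^p g^{-1}$ is torsion-free and acts freely and properly discontinuously on $\calM_{\Iw}$ for each double coset representative $g$, so the quotient $Sh^{ss}_{\Iw, U^p}$ is a finite disjoint union of quotients $\Gamma_i \backslash \calM_{\Iw}$. Since the action preserves the Bruhat-Tits stratification and the decomposition in Theorem \ref{thm2}, and each of the three types of irreducible components $\overline{\calY}_{L_0}$, $\overline{\calY}_{L_2}$, $\calY_{L_1}$ is projective with trivial intersection with its translates under $\Gamma_i$ for small enough $U^p$, the global irreducible components of $Sh^{ss}_{\Iw, U^p}$ are isomorphic to those of $\calM_{\Iw}$. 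This yields the three-dimensional components as $\PP^1$-bundles over $\calM_{L_0}$ and $\calM_{L_2}$, which are defined by $x_3^p x_0 - x_0^p x_3 + x_2^p x_1 - x_1^p x_2 = 0$, and the two-dimensional components as copies of $\PP^1 \times \PP^1$. Because $\dim Sh_{\Iw, U^p} = 3$ while the $\PP^1 \times \PP^1$ components have dimension $2$, they must constitute entire irreducible components of $Sh_{\Iw, U^p}$, so the ordinary locus cannot be dense. For $Sh^{ss}_{P, U^p}$ the same argument applied to the decomposition $\calM_P = \bigsqcup_{x \in \calM_{\{1\}}} (\PP^1 \times \PP^1)_x$ immediately gives the pure two-dimensionality and the $\PP^1 \times \PP^1$ shape of each irreducible component.

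The main obstacle is not the geometric transfer itself, which is routine once the local pictures of Theorems \ref{thm2} and \ref{thm3} are granted, but rather verifying that the Rapoport-Zink uniformization of \cite{RZ96} applies in the present Iwahori and Siegel parahoric settings. One has to check that the integral models defined in Section $5$ are indeed $p$-adic formal completions along the supersingular locus of the spaces represented by $\calN_{\Iw}$ and $\calN_P$, and that the inner form $I$ and its local groups are correctly identified with $J$ at $p$. Once this compatibility is in place and $U^p$ is chosen neat enough that the finitely many stabilizers in $I(\QQ)$ acting on the lattice chain strata of $\calM_{\Iw}$ are trivial, the dimension statement and the explicit shape of the components follow immediately.
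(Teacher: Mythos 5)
Your proposal is correct and follows essentially the same route as the paper: the paper's proof of this theorem is precisely the combination of the Rapoport--Zink uniformization theorem (\cite[Theorem 6.30]{RZ96}, restated as Theorem \ref{RZ-unif}) with the local structure results for $\calM_{\Iw}$ and $\calM_{P}$ (Theorems \ref{main-thm} and \ref{main-app}). The extra care you take about neatness of $U^{p}$ and the freeness of the $I(\QQ)$-action, ensuring irreducible components survive the quotient, is a standard point the paper leaves implicit, so there is no substantive difference in approach.
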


\subsection{Notations and conventions}Let $p$ be a prime and let $\FF$ be an algebraically closed field containing $\FF_{p}$. Let $W_{0}=W(\FF)$ be the Witt vectors of $\FF$ and $K_{0}=W(\FF)_{\QQ}$ its fraction field. Denote by $\psi_{0}$ and $\psi_{1}$ the two embeddings of $\FF_{p^{2}}$ in $\FF$. Let $M_{1}\subset M_{2}$ be two $W(\FF)$-modules we wrire $M_{1}\subset^{d} M_{2}$ if the $\ZZ_{p}$-colength of the inclusion is $d$. If $R$ is ring and $L$ is an $R$-module and $R^{\prime}$ is an $R$-algebra, we define $L_{R^{\prime}}=L\otimes_{R} R^{\prime}$. Let $X$ be a (formal) scheme over $R$, we write $X_{R^{\prime}}$ its base change to $R^{\prime}$.

\subsection*{Acknowledgement} We would like to thank Michael Rapoport for suggesting the problem and make the preprint \cite{GHR19} available to us. We would like to thank Ulrich G\"{o}rtz, Liang Xiao and Eyal Goren for helpful discussions related to this work. This work is completed when the author is a postdoctoral fellow at McGill university and he would like to thank Henri Darmon and Pengfei Guan for their generous support. 

\section{Quaternionic unitary Rapoport-Zink spaces}
\subsection{Quaternionic unitary local Shimura datum}\label{local-shi-datum}
We fix an odd prime $p$. Let $\FF$ be an algebraically closed field containing $\FF_{p}$ and let $W_{0}=W(\FF)$ and $K_{0}=W(\FF)_{\QQ}$ be its fraction field. Let $D$ be the quaternion division algebra and $\mathcal{O}_{D}$ be a maximal order in $D$ stable under the a neben involution $*$. 
More precisely the involution $*$ can be given by 
\begin{equation*}
\begin{split}
&x^{*}=\sigma(x), x\in \QQ_{p^{2}}\\
&\Pi^{*}=\Pi.\\
\end{split}
\end{equation*}
We write $\Trd$ and $\Nrd$ the reduced trace and reduced norm from $D$ tp $\QQ_{p}$. We fix a presentation of $D=\QQ_{p^{2}}[\Pi]$ with $\Pi^{2}=p$ such that $\Pi a=\sigma(a)\Pi$. Let $V=D^{2}$ and fix an alternating form $$(\cdot,\cdot): V\times V\rightarrow \QQ_{p}$$ such that $(av_{1}, v_{2})=(v_{1}, a^{*}v_{2})$ for all $v_{1}, v_{2}\in V$ and $a\in D$. We define the algebraic group $G$ over $\QQ_{p}$ by
\begin{equation}
G(R)=\{g\in GL_{D}(V\otimes_{\QQ_{p}} R): (gx, gy)=c(g)(x, y)\text{ for some $c(g)\in R^{\times}$}\}.
\end{equation}
We refer to $G$ as the quaternionic unitary group and sometimes we write it as $\GU_{D}(V)$ or $\GU_{D}(2)$. By \cite[1.42]{RZ96}, $G$ is a non-trivial inner form of $\GSp(4)$. Let $b\in G(K_{0})$ be an element such that $c(b)=p$ and its image in the $\sigma$-conjugacy classes $B(G)$ of $G$ corresponds to an isocrystal $(N, F)=(V_{K_{0}}, b\sigma)$ which is isoclinic of slope $\frac{1}{2}$. Let $\XX$ be a $p$-divisible group whose associated isocrystal is $(N, F)$. The alternating form $(\cdot, \cdot)$ gives rise to a polarization $\lambda_{\XX}: \XX\rightarrow \XX^{\vee}$. 
For example we can choose $b$ to be the diagonal matrix with $\Pi$ on the diagonal. Since we have $\QQ_{p^{2}}\otimes K_{0}=K_{0}\oplus K_{0}$, there is a decomposition $N=N_{0}\oplus N_{1}$ with $FN_{0}=\Pi N_{0}=N_{1}$. Finally, using the isomorphism $G(K_{0})= \GSp(4)(K_{0})$, we define the \emph{ minscule cocharacter} $\mu: \mathbb{G}_{m}\rightarrow G(K_{0})$ by $z\rightarrow\text{diag}(z,z,1,1)$.

If $\Lambda\subset V_{K_{0}}$ is a lattice, then we define $\Lambda^{\perp}=\{x\in V: (x, y)\in W_{0} \text{ for all $y\in\Lambda$ } \}$ to be the dual lattice.  We consider \emph{periodic lattice chains} in $V$ \cite[Definition 3.]{RZ96}. More precisely, we consider the infinite chain $\mathcal{L}_{\emptyset}$ of $\mathcal{O}_{D}$ lattices in $V$ given by
\begin{equation}\label{chain-iw}
 \cdots\rightarrow \Lambda_{-2}=\Pi\Lambda_{0}\rightarrow \Lambda_{-1}\rightarrow \Lambda_{0}\rightarrow \Lambda_{1}\rightarrow \Lambda_{2}=\Pi^{-1}\Lambda_{0}\rightarrow \cdots.
\end{equation}
We also require that $\Lambda^{\perp}_{0}=\Lambda_{0}$ and $\Lambda^{\perp}_{1}=\Lambda_{-1}$ which means $\mathcal{L}_{\emptyset}$ is selfdual. Now we define the Iwahori subgroup $\Iw\subset G(\QQ_{p})$ by $$\Iw=\bigcap_{i}\Stab_{G(\QQ_{p})}(\Lambda_{i}).$$ 

The above discussion gives an \emph{integral Rapoport-Zink datum} $\mathcal{D}_{\Iw}=(\QQ_{p}, D, \mathcal{O}_{D}, V, (\cdot, \cdot), *,  b, \mu, \mathcal{L}_{\emptyset})$ \cite[Definition 3.18]{RZ96}, \cite[Remark 4.1]{RV14}. We will also consider the chain of lattices $\mathcal{L}_{\{1\}}$ given by
\begin{equation}\label{chain-para}
 \cdots\rightarrow \Lambda_{-2}=\Pi\Lambda_{0}\rightarrow  \Lambda_{0}\rightarrow \Lambda_{2}=\Pi^{-1}\Lambda_{0}\rightarrow \cdots.
\end{equation}
and its associated integral Rapoport-Zink datum $$\mathcal{D}_{1}=(\QQ_{p}, D, \mathcal{O}_{D}, V, (\cdot, \cdot), *,  b, \mu, \mathcal{L}_{\{1\}}).$$  
Notice that we can define a natural map \begin{equation}\label{piL}\pi_{\{1\}}: \calL_{\emptyset}\rightarrow \calL_{\{1\}}\end{equation} by taking the lattice chain $\calL_{\emptyset}$ and skipping those that are $\Pi$-power multiples of $\Lambda_{-1}$ and $\Lambda_{1}$.
Similarly, we can also consider the chain of lattices $\mathcal{L}_{\{02\}}$ given by  
\begin{equation}\label{chain-sieg}
 \cdots\Lambda_{-3}=\Pi\Lambda_{-1} \rightarrow \Lambda_{-1}\rightarrow  \Lambda_{1}\rightarrow \Lambda_{3}=\Pi^{-1}\Lambda_{1}\cdots.
\end{equation}
and its associated integral Rapoport-Zink datum $$\mathcal{D}_{\{02\}}=(\QQ_{p}, D, \mathcal{O}_{D}, V, (\cdot, \cdot), *,  b, \mu, \mathcal{L}_{\{02\}}).$$
We also have the natural map  \begin{equation}\label{piL}\pi_{\{02\}}: \calL_{\emptyset}\rightarrow \calL_{\{02\}}.\end{equation}  by taking the lattice chain $\calL_{\emptyset}$ and skipping those that are $\Pi$-power multiples of $\Lambda_{0}$.

\subsection{Rapoport-Zink space of parahoric level}\label{RZ-para}
The datum $\mathcal{D}_{*}$ with $*=\Iw, \{1\}, \{0,2\}$ gives rise to a \emph{quaternionic Rapoport-Zink space with parahoric level structure}. Let $\Nilp$ be the category of $W_{0}$-schemes $S$ on which $p$ is locally nilpotent. We denote by $\bar{S}$ the closed subscheme of $S$ defined by the ideal sheaf $p\mathcal{O}_{S}$. Let $X$ be a $p$-divisible group over $S$ and denote by $\DD(X)$ the \emph{covariant {\Dieu} crystal}  of $X$ cf. \cite{BBM82}. In the following $\calL=\calL_{\emptyset}, \calL_{\{0,2\}} \text{ or }\calL_{\{1\}}$.
\begin{definition}\label{RZ-functor}
We define the set valued functor $\calN_{\calL}: \Nilp\rightarrow \Sets$ such that for $S\in\Nilp$ the set $\calN_{\calL}(S)$ parametrizes the following data up to isomorphism.
\begin{enumerate}
\item For each lattice $\Lambda\in \mathcal{L}$, a $p$-divisible group $X_{\Lambda}$ over $S$ of dimension $4$ with an action $$\iota_{\Lambda}: \calO_{D}\rightarrow \End(X_{\Lambda}) .$$

\item For each $\Lambda\in \mathcal{L}$, a quasi-isogeny 
$$\rho_{\Lambda}: \XX\times_{\FF} \bar{S}\rightarrow X_{\Lambda}\times_{S} \bar{S}.$$

\item For each $\Lambda\in \mathcal{L}$, an isomorphism
$$\lambda_{\Lambda}: X_{\Lambda}\rightarrow X^{\vee}_{\Lambda^{\perp}}.$$
\end{enumerate}
We require that the datum $(X_{\Lambda}, \iota_{\Lambda}, \rho_{\Lambda}, \lambda_{\Lambda})$ satisfies the following conditions
\begin{enumerate}
\item Locally on $S$ the $\calO_{D}\otimes \calO_{S}$-module $M_{\Lambda}=\DD(X_{\Lambda})(S)$ is isomorphic to $\Lambda\otimes \calO_{S}.$
\item Let $\Lambda\subset \Lambda^{\prime}$ be neighbours in the chain $\calL$ and let $\tilde{\rho}_{\Lambda^{\prime}, \Lambda}$
be the quasi-isogeny that lifts $\rho_{\Lambda^{\prime}, \Lambda}=\rho_{\Lambda^{\prime}}\circ\rho^{-1}_{\Lambda}$. Then $\tilde{\rho}_{\Lambda^{\prime}, \Lambda}$ is an isogeny whose kernel $\Ker(\tilde{\rho}_{\Lambda^{\prime}, \Lambda})$ is locally isomorphic to $\Lambda^{\prime}/\Lambda$ on $S$.
\item For an element $a\in D^{\times}$ that normalizes $\calO_{D}$, let $X^{a}_{\Lambda}$ be the same $p$-divisible group $X_{\Lambda}$ but with an action $\iota^{a}_{X}: \calO_{D}\rightarrow \End(X_{\Lambda})$ given by $\iota^{a}_{X}(x)=\iota_{X}(a^{-1}xa)$. Then there exists an isomorphism $\theta_{a}: X^{a}_{\Lambda}\rightarrow X_{a\Lambda}$.
\item We require that $\iota_{\Lambda}$ satisfies the Kottwitz condition
$$\det(T-\iota_{\Lambda}(a); \Lie X_{\Lambda})= (T^{2}-\Trd(a))T+\Nrd(a))^{2}$$ for all $a\in \calO_{D}$ and all $\Lambda\in \calL$.

\item The polarization $\lambda_{\XX}: \XX_{\bar{S}}\rightarrow \XX^{\vee}_{\bar{S}}$ agrees with the composite $\rho^{\vee}_{\Lambda^{\perp}}\circ\lambda_{\Lambda, \bar{S}}\circ\rho_{\Lambda}$ up to a $\QQ^{\times}_{p}$-multiple. 
\end{enumerate}
\end{definition}

\begin{theorem}[{\cite[Theorem 3.25]{RZ96}}]
The functor $\calN_{\calL}$ is representable by a formal scheme over $W_{0}$ which is formally locally of finite type. 
\end{theorem}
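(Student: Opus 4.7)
The statement is a direct instance of the general Rapoport--Zink representability theorem [RZ96, Theorem 3.25] applied to the three integral Rapoport--Zink data $\mathcal{D}_{\Iw}$, $\mathcal{D}_{\{1\}}$, $\mathcal{D}_{\{02\}}$ constructed in Section \ref{local-shi-datum}. My plan is therefore to verify that each piece of our quaternionic unitary datum fits the input hypotheses of that theorem, rather than to reprove the general representability statement.

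First, I would confirm that each chain $\calL \in \{\calL_{\emptyset}, \calL_{\{1\}}, \calL_{\{02\}}\}$ is a \emph{periodic self-dual multichain of $\calO_D$-lattices in $V$} in the sense of [RZ96, Definitions 3.1 and 3.13]. Periodicity is built into the relations $\Lambda_{i+2} = \Pi^{-1}\Lambda_i$ from equations \eqref{chain-iw}, \eqref{chain-para}, \eqref{chain-sieg}; the $\calO_D$-stability is automatic from the construction; and the relations $\Lambda_0^\perp = \Lambda_0$, $\Lambda_1^\perp = \Lambda_{-1}$ realize the involution on the chain compatible with the alternating form on $V$. In parallel I would confirm that the rational triple $(G, b, \mu)$ defines a local Shimura datum: $G = \GU_D(V)$ is connected reductive over $\QQ_p$, $\mu = \mathrm{diag}(z,z,1,1)$ is minuscule, and the diagonal $b = \mathrm{diag}(\Pi,\Pi)$ with $c(b)=p$ places us in the $\sigma$-conjugacy class whose associated isocrystal $(N,F) = (V_{K_0}, b\sigma)$ is isoclinic of slope $1/2$, matching the framing object $\XX$ together with its polarization $\lambda_{\XX}$.

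Once these combinatorial and group-theoretic hypotheses are in place, the plan is to invoke [RZ96, Theorem 3.25]. The mechanism is Grothendieck--Messing theory: \'etale locally on $S \in \Nilp$, the functor $\calN_{\calL}$ is presented as a closed formal subfunctor of a finite product, indexed by $\calL$ modulo the period, of universal deformation spaces of single $p$-divisible groups, each representable by $\Spf$ of a complete Noetherian local $W_0$-algebra. The isogeny condition on $\Ker(\tilde\rho_{\Lambda',\Lambda})$, the polarization compatibility $\lambda_\Lambda : X_\Lambda \xrightarrow{\sim} X^\vee_{\Lambda^\perp}$, and the $\calO_D$-equivariance provided by the isomorphisms $\theta_a$ each impose closed conditions on this product, giving a formal subscheme. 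The decomposition $\calN_{\calL} = \bigsqcup_i \calN_{\calL}(i)$ by the height of $\rho_{\Lambda_0}$ yields the \emph{formally locally of finite type} property, since after fixing the height each piece is quasi-compact.

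The most delicate verification will be checking that the Kottwitz determinant condition (item (4) of Definition \ref{RZ-functor}) is consistent with the chosen minuscule $\mu$, so that the prescribed closed condition actually defines a non-empty formal subscheme. Using $\mu = \mathrm{diag}(z,z,1,1)$ under the identification $G(K_0) \simeq \GSp(4)(K_0)$, the Hodge filtration on $\Lie X_\Lambda$ is a rank-$2$ direct summand of a rank-$4$ free $\calO_S$-module on which $\calO_D$ acts; under the splitting $D \otimes_{\QQ_p} K_0 \simeq M_2(K_0)$ one checks directly that the characteristic polynomial of $\iota_\Lambda(a)$ on the quotient $\Lie X_\Lambda$ is $(T^2 - \Trd(a) T + \Nrd(a))^2$, as required. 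With this verification in hand, the representability and formally locally of finite type assertion follow verbatim from the Rapoport--Zink theorem.
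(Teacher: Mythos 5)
Your proposal is correct and follows essentially the same route as the paper: the paper offers no independent argument but simply cites \cite[Theorem 3.25]{RZ96}, having already packaged the quaternionic unitary data into the integral Rapoport--Zink data $\mathcal{D}_{\Iw}$, $\mathcal{D}_{1}$, $\mathcal{D}_{\{02\}}$ in Section 2.1, which is exactly the verification you carry out. Your extra sketch of the mechanism behind the cited theorem (Grothendieck--Messing, closed conditions, the height decomposition) is harmless but not needed, and the non-emptiness remark at the end is beside the point of the representability statement.
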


We will refer to $\calN_{\Iw}=\calN_{\calL_{\emptyset}}$ as the \emph{quaternionic unitary Rapoport-Zink space with Iwahori level structure} and $\calN=\calN_{\calL_{\{1\}}}$ as the \emph{quaternionic unitary Rapoport-Zink space with paramodular level structure}. The space  $\calN_{P}=\calN_{\calL_{\{0,2\}}}$ will be referred to as the \emph{quaternionic unitary Rapoport-Zink space with Siegel parahoric level structure}.  The underlying reduced scheme of $\calN$ was studied in \cite{Wang-2} using the Bruhat-Tits stratification \cite{Vol10}, \cite{VW11}, \cite{GH15}. For $\calL=\calL_{\emptyset},\calL_{\{0,2\}} \text{ or }\calL_{\{1\}}$, we have the decomposition $$\calN_{\calL}=\bigsqcup_{i\in \ZZ} \calN^{(i)}_{\calL}$$ where $\calN^{(i)}_{\calL}$ is the open and closed formal subscheme  of $\calN_{\calL}$ classifying those $x=(X_{\Lambda}, \iota_{\Lambda},  \rho_{\Lambda}, \lambda_{\Lambda})$ such that $\rho^{\vee}_{\Lambda}\circ\lambda_{\Lambda, \bar{S}}\circ\rho_{\Lambda}=c(x)\lambda_{\XX}$ for some $c(x)\in p^{i}\ZZ^{\times}_{p}$ and a fixed $\Lambda\in \calL$. 

Let $J_{b}$ be the group functor assigning any $\QQ_{p}$-algebra $R$ the group $$J_{b}(R)=\{g\in G(R\otimes_{\QQ_{p}}K_{0}): g(b\sigma)=(b\sigma)g\}.$$
It is representable by an algebraic group over $\QQ_{p}$ and is in our case isomorphic to $\GSp(4)$. We review the proof of this fact as it will be useful when we discuss the set of geometric points of $\calN_{\calL}$. Recall that we have the isocrystal $(N, F)$ associated to $\XX$ and the decomposition $N=N_{0}\oplus N_{1}$. Let $\tau=\Pi^{-1}F$ and consider its action on $N_{0}$. Then we have $N_{0}= C\otimes K_{0}$ with $C=N^{\tau=1}_{0}$. We define a new alternating form $(\cdot, \cdot)_{0}$ on $N_{0}$ by $(x, y)_{0}=(x, \Pi y)$ for all $x,y\in N_{0}$. We have
\begin{equation}
\begin{split}
(\tau x, \tau y)_{0}= &(\Pi^{-1}F x, \Pi^{-1}F y)\\
&=\sigma(x, \Pi y)\\
&=\sigma(x, y)_{0}.\\
\end{split}
\end{equation}
for all $x,y\in N_{0}$. We can restrict the form $(\cdot,\cdot)_{0}$ on $C$ and consider $(C, (\cdot, \cdot)_{0})$ as a symplectic space over $\QQ_{p}$. 

\begin{lemma}
There is an isomorphism $J_{b}(\QQ_{p})\cong \GSp(C)$.
\end{lemma}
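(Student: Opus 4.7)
My plan is to construct the isomorphism explicitly as the restriction map $g \mapsto g|_{C}$ and to verify bijectivity. First I would unpack the defining conditions on $g \in J_{b}(\QQ_{p}) \subset G(K_{0})$: $g$ is $\mathcal{O}_{D}$-linear, preserves $(\cdot,\cdot)$ up to a scalar $c(g) \in K_{0}^{\times}$, and commutes with $F = b\sigma$. The $\QQ_{p^{2}}$-action forces $g$ to preserve the decomposition $N = N_{0} \oplus N_{1}$, and commutativity with both $\Pi$ and $F$ forces $g|_{N_{0}}$ to commute with $\tau = \Pi^{-1}F$. Since $N_{0} = C \otimes_{\QQ_{p}} K_{0}$ with $C = N_{0}^{\tau = 1}$, this says $g|_{N_{0}}$ is the base change of a $\QQ_{p}$-linear automorphism of $C$, yielding the restriction map.

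Next I would check that $g|_{C}$ actually lies in $\GSp(C)$. Using $\Pi g = g\Pi$ together with the definition $(x,y)_{0} = (x, \Pi y)$, the similitude identity on $N$ translates to $(g_{0}x, g_{0}y)_{0} = c(g)(x,y)_{0}$ on all of $N_{0}$. The $\sigma$-equivariance $(\tau x, \tau y)_{0} = \sigma (x,y)_{0}$ established just before the lemma shows that $(\cdot,\cdot)_{0}$ restricts to a $\QQ_{p}$-valued nondegenerate alternating form on $C$, and this both forces $c(g) \in \QQ_{p}^{\times}$ and realises $g|_{C}$ as a symplectic similitude. Injectivity is then immediate, since $g|_{C}$ determines $g|_{N_{0}}$ by $K_{0}$-linearity, which in turn determines $g|_{N_{1}}$ by conjugation with $\Pi$.

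For surjectivity I would reverse the construction: given $h \in \GSp(C)$ with multiplier $c \in \QQ_{p}^{\times}$, extend $h$ $K_{0}$-linearly to $g_{0}$ on $N_{0}$, set $g_{1} := \Pi g_{0} \Pi^{-1}$ on $N_{1}$, and take $g := g_{0} \oplus g_{1}$. Compatibility with the $\mathcal{O}_{D}$-action is built in; compatibility with $F$ reduces to $g_{0}\tau = \tau g_{0}$, which holds because $g_{0}$ is the $K_{0}$-linear extension of a $\QQ_{p}$-linear map on $C$; and preservation of $(\cdot,\cdot)$ on the only nontrivial pairing $N_{0} \times N_{1}$ reduces through $\Pi$ to preservation of $(\cdot,\cdot)_{0}$ by $g_{0}$, which is exactly the hypothesis on $h$. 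The main subtlety, rather than any long computation, is the rationality of the similitude factor---that $c(g)$ lies in $\QQ_{p}^{\times}$ and not merely in $K_{0}^{\times}$---and this is precisely what the $\sigma$-equivariance of $(\cdot,\cdot)_{0}$ together with the fixed-point description $C = N_{0}^{\tau = 1}$ are designed to deliver.
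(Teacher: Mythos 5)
Your proof is correct and follows essentially the same route as the paper's: use the $\calO_{D}$-action to respect the splitting $N=N_{0}\oplus N_{1}$, use commutation with $F$ (hence with $\tau=\Pi^{-1}F$) to descend the restriction to $N_{0}$ down to $C=N_{0}^{\tau=1}$, and identify the image with $\GSp(C)$ via the form $(\cdot,\cdot)_{0}$. The paper's proof is just a terser version of this; your added checks (rationality of the similitude factor via the $\sigma$-equivariance of $(\cdot,\cdot)_{0}$, and the explicit inverse construction for surjectivity) are exactly the details it leaves implicit.
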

\begin{proof}
By definition we have $J_{b}(\QQ_{p})=\{g\in \GU_{D}(N); gF=Fg\}$. Since $g$ commutes with the action of $D$, it respects the decomposition of $N=N_{0}\oplus N_{1}$.  Since it also commutes with the Frobenius $F$, the action is uniquely determined on its restriction to $N_{0}$. The result then follows from the fact that $g$ also commutes with $\tau$. 
\end{proof}

The group $J_{b}(\QQ_{p})$ acts on the space $\calN_{\calL}$ from left. Let $x=(X_{\Lambda}, \iota_{\Lambda}, \rho_{\Lambda}, \lambda_{\Lambda})$, then we define $g.x=(X_{\Lambda}, \iota_{\Lambda}, \rho_{\Lambda}\circ g^{-1}, \lambda_{\Lambda})$. For each $g\in J_{b}(\QQ_{p})$, this action restricts to an isomorphism $\calN^{(i)}_{\calL}\cong\calN^{(i+\ord_{p}(c(g)))}_{\calL}$ where $c(g)$ is the similitude factor of $g\in \GSp(C)$. Since the similitude map $c: \GSp(C)\rightarrow \QQ^{\times}_{p}$ is surjective and $c(p)=p^{2}$, we arrive at the following lemma immediately. 

\begin{lemma}\hfill
\begin{enumerate}
\item There is an isomorphism $\calN^{(i)}_{\calL}\cong\calN^{(0)}_{\calL}$ for any $i\in \ZZ$.
\item  There is an isomorphism $p^{\ZZ}\backslash \calN_{\calL}=\calN^{(0)}_{\calL}\sqcup \calN^{(1)}_{\calL}$.
\end{enumerate}
\end{lemma}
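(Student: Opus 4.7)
The plan is to deduce both parts directly from the $J_b(\QQ_p)$-action on $\calN_{\calL}$ constructed in the paragraph preceding the lemma, together with the identification $J_b(\QQ_p) \cong \GSp(C)$ furnished by the previous lemma and the surjectivity of the similitude character $c \colon \GSp(C) \to \QQ_p^\times$. The key input already in place is the formula $g.\calN^{(i)}_{\calL} = \calN^{(i+\ord_p(c(g)))}_{\calL}$, so everything reduces to choosing explicit elements $g$ with prescribed $\ord_p(c(g))$.

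For part (1), fix an integer $i$ and choose $g_i \in \GSp(C) = J_b(\QQ_p)$ with $\ord_p(c(g_i)) = i$; such an element exists because $c$ is surjective and $\ord_p \colon \QQ_p^\times \to \ZZ$ is surjective (one may for instance take a suitable diagonal similitude). The action $x \mapsto g_i.x$ then restricts to an isomorphism $\calN^{(0)}_{\calL} \xrightarrow{\sim} \calN^{(i)}_{\calL}$, with inverse induced by the action of $g_i^{-1}$.

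For part (2), the central scalar $p \cdot \mathrm{id}_C \in \GSp(C)$ has similitude factor $c(p \cdot \mathrm{id}_C) = p^2$, since $(p x, p y)_0 = p^2 (x,y)_0$. Hence its action sends $\calN^{(i)}_{\calL}$ isomorphically onto $\calN^{(i+2)}_{\calL}$ for every $i$. Consequently the subgroup $p^{\ZZ} \subset J_b(\QQ_p)$ acts on $\calN_{\calL} = \bigsqcup_{i \in \ZZ} \calN^{(i)}_{\calL}$ by shifting the index by $2$; this action is free because it permutes a disjoint union of open-and-closed formal subschemes without fixed points, and the orbits on the index set $\ZZ$ are precisely the two residue classes modulo $2$. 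Choosing $\{0,1\}$ as representatives yields the identification $p^{\ZZ} \backslash \calN_{\calL} \cong \calN^{(0)}_{\calL} \sqcup \calN^{(1)}_{\calL}$.

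There is no real obstacle here: the argument is essentially formal once the action formula above is granted, and the only cosmetic checks needed are the computation $c(p \cdot \mathrm{id}) = p^2$ and the well-definedness of $c(x)$ independently of the chosen reference lattice $\Lambda \in \calL$, which is guaranteed by condition (5) of Definition \ref{RZ-functor}.
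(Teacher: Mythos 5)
Your proposal is correct and follows essentially the same route as the paper: the paper likewise deduces both parts immediately from the formula $g.\calN^{(i)}_{\calL}\cong\calN^{(i+\ord_{p}(c(g)))}_{\calL}$, the surjectivity of the similitude map $c:\GSp(C)\rightarrow\QQ^{\times}_{p}$, and the computation $c(p)=p^{2}$. The extra details you supply (choice of $g_i$, freeness of the $p^{\ZZ}$-action, residue classes mod $2$) are exactly the routine verifications the paper leaves implicit.
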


\subsection{Lattice description of geometric points} Let $\calM_{\calL}$ be the underlying reduced scheme of the the formal scheme $\calN^{(0)}_{\calL}$. The goal of this subsection is to describe the set $\calM_{\calL}(\FF)$ of geometric points of $\calM_{\calL}$ in terms of lattices in $N_{0}$. 
We first discuss the case of $\calL=\calL_{\{1\}}$ and we write $\calM_{\calL_{\{1\}}}$ as $\calM$. We have repeated some proofs in \cite{Wang-2} for the reader's convenience. 
\begin{lemma}\label{RZ-set-pre}
The map sending a point $x=(X_{\Lambda}, \iota_{\Lambda}, \rho_{\Lambda}, \lambda_{\Lambda})\in\calM(\FF)$ to the {\Dieu} module $M=M_{\Lambda_{0}}$ of $X_{\Lambda_{0}}$ gives a bijection between the set $\calM$ and the set of $W_{0}$-lattices
$$\{M\subset N: M^{\perp}=M, pM\subset VM\subset^{4} M, \Pi M\subset^{4} M\}.$$
\end{lemma}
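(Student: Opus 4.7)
The plan is to apply covariant Dieudonn\'e theory over $\FF$ to convert the moduli problem into a problem about $W_0$-lattices in the isocrystal $N$.

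First I would define the forward map. Given $x = (X_{\Lambda_0}, \iota_{\Lambda_0}, \rho_{\Lambda_0}, \lambda_{\Lambda_0}) \in \calM(\FF)$, set $M := \DD(X_{\Lambda_0})(\FF)$ with its Frobenius $F$ and Verschiebung $V$. The quasi-isogeny $\rho_{\Lambda_0}$ induces an $F$-equivariant and $\calO_D$-equivariant identification $M \otimes_{W_0} K_0 \cong N$, realizing $M$ as a $W_0$-lattice in $N$. I would then verify each lattice condition: the $\calO_D$-action gives $\Pi M \subset M$ of $W_0$-colength $4$, since $\Pi^2 = p$ and $\mathrm{rank}_{W_0} M = 8$; the covariant identification $\Lie X_{\Lambda_0} \cong M/VM$ combined with the Kottwitz condition forces $\dim_\FF M/VM = 4$, giving $VM \subset^{4} M$; the inclusion $pM \subset VM$ follows from $FM \subset M$ and $p = VF$; finally, the principal polarization $\lambda_{\Lambda_0}$, together with the compatibility with $\lambda_\XX$ up to a unit in $\ZZ_p^\times$ (because $x \in \calN^{(0)}$), yields $M^\perp = M$.

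Next I would construct the inverse. Given a lattice $M \subset N$ satisfying the four conditions, I define a Frobenius on $M$ by $F := p V^{-1}$, which is well-defined precisely because $pM \subset VM$, so that $(M, F, V)$ is a Dieudonn\'e module. By the covariant Dieudonn\'e equivalence over $\FF$, it corresponds to a $p$-divisible group $X_M$ of height $8$ over $\FF$. The $\calO_D$-action on $N$ restricts to $M$ and hence produces $\iota_M: \calO_D \to \End(X_M)$; the self-duality $M^\perp = M$ gives a principal polarization $\lambda_M: X_M \to X_M^\vee$; and the inclusion $M \subset N$ together with an identification $N \cong \DD(\XX)(\FF) \otimes_{W_0} K_0$ produces a quasi-isogeny $\rho_M: \XX \to X_M$.

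The main obstacle will be verifying the full Kottwitz condition on $\iota_M$, which is more restrictive than the dimension statement $\dim_\FF \Lie X_M = 4$: it specifies the $\calO_D$-module structure of $M/VM$. I would resolve this by showing that the listed lattice conditions force $M$ to be isomorphic, as an $\calO_D \otimes_{\ZZ_p} W_0$-module, to $\Lambda_0 \otimes_{\ZZ_p} W_0$; indeed, any self-dual $\calO_D$-stable $W_0$-lattice in $N$ of rank $8$ with $\Pi M \subset^{4} M$ is of this form by a standard local-model-theoretic classification. Once this identification is in place the Kottwitz condition on $M/VM$ is automatic, and since the forward and inverse constructions are manifestly inverse at the level of Dieudonn\'e modules, the claimed bijection follows.
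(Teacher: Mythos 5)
Your forward direction is fine and is essentially the paper's argument, with the implicit points (why $\Pi M\subset^{4}M$, why $pM\subset VM$, and why $c(x)\in\ZZ_{p}^{\times}$ on $\calN^{(0)}$ gives $M^{\perp}=M$) correctly spelled out.

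The converse is where your proposal breaks, precisely at the step you single out. The classification you appeal to is false: a self-dual, $\calO_{D}$-stable $W_{0}$-lattice $M\subset N$ with $\Pi M\subset^{4}M$ need \emph{not} be isomorphic to $\Lambda_{0}\otimes_{\ZZ_{p}}W_{0}$ as an $\calO_{D}\otimes W_{0}$-module. Since $N_{0}$ and $N_{1}$ are isotropic, self-duality forces $M=M_{0}\oplus \Pi M_{0}^{\vee}$, where $(\cdot)^{\vee}$ is the dual for $(x,y)_{0}=(x,\Pi y)$ on $N_{0}$, and the module type is governed by $d=\dim_{\FF}M_{0}^{\vee}/M_{0}\in\{0,2,4\}$: only $d=2$ gives $\Lambda_{0}\otimes W_{0}$, while $\Pi M\subset^{4}M$ holds for \emph{every} $\calO_{D}$-stable lattice of rank $8$ (as you observe, $\Pi^{2}=p$), so it excludes nothing. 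Worse, the extreme types genuinely occur inside the displayed set: take a vertex lattice $L\subset C$ of type $2$, i.e. $L=pL^{\vee}$, and put $M_{0}=L_{W_{0}}$, $M=M_{0}\oplus\Pi M_{0}^{\vee}$. One checks that $M^{\perp}=M$, that $M$ is $\calO_{D}$-, $F$- and $V$-stable with $VM_{1}=M_{0}$ and $VM_{0}=pM_{1}$, hence $pM\subset VM\subset^{4}M$; yet $M/VM=M_{1}/pM_{1}$ is concentrated in a single $\ZZ_{p^{2}}$-eigenspace, so the Kottwitz condition fails, and correspondingly $\dim M_{0}/\Pi M_{1}=0$, $\dim M_{1}/\Pi M_{0}=4$, so $M\not\cong\Lambda_{0}\otimes W_{0}$. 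Thus the Kottwitz condition is not a consequence of the listed lattice conditions, and no argument can derive it from them; it has to be imposed, equivalently the componentwise conditions $pM_{0}^{\vee}\subset M_{0}\subset^{2}M_{0}^{\vee}$ and $p\tau(M_{0}^{\vee})\subset M_{0}\subset^{2}\tau(M_{0}^{\vee})$ of \eqref{RZ-set-para} must be read into the statement. The paper's own converse does not derive it either: it simply carries the conditions of Definition \ref{RZ-functor} along with $M$, which is why it can say the tuple is ``clearly'' determined.

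What is salvageable in your plan is the implication in the other order: once the module condition $pM_{0}^{\vee}\subset M_{0}\subset^{2}M_{0}^{\vee}$ is imposed, then, because $\tau=\Pi^{-1}F$ preserves volumes of lattices in $N_{0}$ (it fixes the $\QQ_{p}$-form $C$) and $(\tau x,\tau y)_{0}=\sigma((x,y)_{0})$, the $F$- and $V$-stability inclusions $p\tau(M_{0}^{\vee})\subset M_{0}\subset\tau(M_{0}^{\vee})$ automatically have colengths $(2,2)$, which is the Kottwitz condition; so ``module type $\Lambda_{0}\otimes W_{0}$ implies Kottwitz'' is fine, and what fails is that the module type is forced. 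A smaller imprecision of the same kind: your inverse uses that the $\ZZ_{p^{2}}$-action preserves $M$, i.e. $M=(M\cap N_{0})\oplus(M\cap N_{1})$, which is likewise not literally among the displayed conditions and has to be assumed.
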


\begin{proof}
Since any $\Lambda\in \calL_{\{1\}}$ is of the form $\Pi^{n} \Lambda_{0}$ for some $n\in \ZZ$, $X_{\Lambda}$ is determined by $X_{\Lambda_{0}}$ by condition $(3)$ in Definition \ref{RZ-functor}. Let $M=M_{\Lambda_{0}}$, then $M=M^{\perp}$ follows from condition $(1)$ in Defintion \ref{RZ-functor} and the fact that $\Lambda_{0}=\Lambda^{\perp}_{0}$.  The inclusions $pM\subset VM\subset^{4} M $ follows from the Kottwitz condition which is condition $(4)$ in Definition \ref{RZ-functor}. The inclusion $ \Pi M\subset^{4} M$ follows from condition $(2)$ in Definition \ref{RZ-functor}. Conversely given $M$ in the above set, it clearly determines a tuple $(X_{\Lambda_{0}}, \iota_{\Lambda_{0}}, \rho_{\Lambda_{0}}, \lambda_{\Lambda_{0}})$ which in turn determines a unique point $x=(X_{\Lambda}, \iota_{\Lambda}, \rho_{\Lambda}, \lambda_{\Lambda})\in \calM(\FF)$.  
\end{proof}

Recall that we have $W_{0}\otimes \ZZ_{p^{2}}= W_{0}\oplus W_{0}$ and $\calO_{D}= \ZZ_{p^{2}}[\Pi]$. This induces a decomposition $M=M_{0}\oplus M_{1}$ for any $\calO_{D}$-module $M\subset N$. Moreover if we assume that $M$ is a {\Dieu} module, then we have 
\begin{equation}\label{M0M1}
\begin{split}
& \Pi M_{0}\subset M_{1}, \Pi M_{1}\subset M_{1};\\
& F M_{0}\subset M_{1}, F M_{1}\subset M_{1}.\\
\end{split}
\end{equation}

\begin{corollary}
The map sending $M$ to $M_{0}$ defines a bijection between the set $$\{M\subset N: M^{\perp}=M, pM\subset VM\subset^{4} M, \Pi M\subset^{4} M\}$$ in Lemma \ref{RZ-set-pre} and the set
\begin{equation}\label{RZ-set-para}
\{M_{0}\subset N_{0}:pM^{\vee}_{0}\subset M_{0}\subset^{2} M^{\vee}_{0}, p\tau(M^{\vee}_{0})\subset M_{0}\subset^{2} \tau(M^{\vee}_{0}) \}.
\end{equation}

Here $M^{\vee}_{0}$ is the dual lattice of $M_{0}$ with respect to the form $(\cdot, \cdot)_{0}$ on $N_{0}$. 
\end{corollary}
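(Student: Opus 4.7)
The plan is to use the idempotent decomposition $M=M_0\oplus M_1$ coming from $\ZZ_{p^2}\otimes_{\ZZ_p}W_0=W_0\oplus W_0$ and translate each defining condition of the set in Lemma~\ref{RZ-set-pre} into an equivalent condition on $M_0\subset N_0$ alone. The map $M\mapsto M_0$ will then be bijective onto the set in \eqref{RZ-set-para}.

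First, I would show that the self-duality $M^\perp=M$ forces $M_1=\Pi M_0^\vee$, so that $M_0$ already determines $M$. The compatibility $(av_1,v_2)=(v_1,a^*v_2)$ with $*=\sigma$ on $\QQ_{p^2}$ implies $(N_0,N_0)=(N_1,N_1)=0$, so $(\cdot,\cdot)$ restricts to a perfect pairing $N_0\times N_1\to K_0$; combined with $(\cdot,\cdot)_0=(\cdot,\Pi\cdot)$ one computes $\{y\in N_1:(M_0,y)\subset W_0\}=\Pi M_0^\vee$. Next, the condition $\Pi M\subset^4 M$ splits according to the decomposition: $\Pi M_0\subset M_1=\Pi M_0^\vee$ translates to $M_0\subset M_0^\vee$, and $\Pi M_1=pM_0^\vee\subset M_0$ is the reverse inclusion. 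Since $[M_0^\vee:pM_0^\vee]=4$, the total colength being $4$ together with the self-duality forces $[M_0^\vee:M_0]=[M_0:pM_0^\vee]=2$, producing $pM_0^\vee\subset M_0\subset^2 M_0^\vee$.

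For the Kottwitz/Verschiebung condition $pM\subset VM\subset^4 M$, I would use $F=\Pi\tau$ on $N_0$ and the parallel formula on $N_1$ (obtained from $\tau'=\Pi\tau\Pi^{-1}$) to compute the restrictions of $V$ to $N_0$ and $N_1$ explicitly in terms of $\tau$, $\Pi$, and $p$. After substituting $M_1=\Pi M_0^\vee$, each of the two inclusions $VM_0\subset M_1$ and $VM_1\subset M_0$ can be rewritten as a condition on $M_0$ alone; the semi-isometry identity $(\tau x,\tau y)_0=\sigma(x,y)_0$ yields $(\tau L)^\vee=\tau(L^\vee)$, which is used to convert one of these into the form $M_0\subset\tau(M_0^\vee)$, while the other becomes $p\tau(M_0^\vee)\subset M_0$. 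The total colength constraint $[M:VM]=4$ then forces each of the two individual inclusions to have colength exactly $2$, reproducing the condition in \eqref{RZ-set-para}.

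The converse is obtained by reversing each translation: given $M_0$ in the set on the right, defining $M:=M_0\oplus\Pi M_0^\vee$ yields an $\mathcal{O}_D$-stable, self-dual lattice satisfying both colength conditions of Lemma~\ref{RZ-set-pre}. The main technical obstacle lies in the Kottwitz step, where one has to identify $V|_{N_0}$ and $V|_{N_1}$ in terms of $\tau$ while keeping careful track of which factor each operator lands in and how colengths of $W_0$-lattices transform under the $\sigma$-linear map $\tau$; once these formulas are at hand, the remainder is straightforward bookkeeping.
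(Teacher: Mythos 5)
Your overall route coincides with the paper's: decompose $M=M_{0}\oplus M_{1}$ via $\ZZ_{p^{2}}\otimes W_{0}=W_{0}\oplus W_{0}$, use $M^{\perp}=M$ together with $(\cdot,\cdot)_{0}=(\cdot,\Pi\cdot)$ to show $M_{1}=\Pi M^{\vee}_{0}$ (this is exactly the displayed computation $\Pi M^{\perp}_{0}=pM^{\vee}_{0}$ in the paper), translate the $\Pi$-condition into the first row of \eqref{RZ-set-para} and the $V$-condition into the second row using $F=\Pi\tau$ and $(\tau L)^{\vee}=\tau(L^{\vee})$, and invert by $M_{0}\mapsto M_{0}\oplus\Pi M^{\vee}_{0}$. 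So there is no divergence of method.

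There is, however, a genuine gap at the one numerically delicate point, namely the exact index $2$. You claim twice that the total colength $4$ (together with self-duality) forces the two individual colengths to be $2$, and this is false. Once the inclusions $pM^{\vee}_{0}\subset M_{0}\subset M^{\vee}_{0}$ hold, the sum $[M_{0}:pM^{\vee}_{0}]+[M^{\vee}_{0}:M_{0}]$ equals $[M^{\vee}_{0}:pM^{\vee}_{0}]=4$ automatically, so the condition $\Pi M\subset^{4}M$ carries no information beyond the inclusions; since $(\cdot,\cdot)_{0}$ is alternating the splitting is even, but $(0,4)$ and $(4,0)$ are not excluded, and the same applies to your argument for the $V$-row. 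Concretely, choose a self-dual lattice $C_{0}\subset C$ for $(\cdot,\cdot)_{0}$ and set $M_{0}=C_{0}\otimes W_{0}$ (so $\tau(M_{0})=M_{0}=M^{\vee}_{0}$) and $M=M_{0}\oplus\Pi M_{0}$: then $M^{\perp}=M$, $\Pi M\subset^{4}M$ and $pM\subset VM\subset^{4}M$ (here $VM_{0}=M_{1}$ and $VM_{1}=pM_{0}$, so both splittings are $(0,4)$), yet $M_{0}$ does not lie in \eqref{RZ-set-para}. The correct source of the index-$2$ statements is the refined moduli conditions standing behind Lemma \ref{RZ-set-pre}: the Kottwitz condition (4) of Definition \ref{RZ-functor} forces the two $\ZZ_{p^{2}}$-eigenspaces of $\Lie X=M/VM$ to be $2$-dimensional each, i.e. $pM_{0}\subset VM_{1}\subset^{2}M_{0}$ and $pM_{1}\subset VM_{0}\subset^{2}M_{1}$, and conditions (1)--(2) make $M/\Pi M$ locally free of rank $2$ over $\FF\times\FF$, i.e. $\Pi M_{1}\subset^{2}M_{0}\subset^{2}\Pi^{-1}M_{1}$; this is also how the paper obtains these indices (its set-theoretic description records only the total colengths, so the $\subset^{2}$'s must be traced back to the moduli problem rather than deduced from the displayed set). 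With that replacement, the rest of your translation and the inverse construction go through and agree with the paper's proof.
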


\begin{proof}
It follows from $\Pi M\subset^{4} M$ that $\Pi M_{1}\subset^{2} M_{0} \subset^{2}  \Pi^{-1}M_{1}$. Since $M^{\perp}=M$, $M^{\perp}_{0}=M_{1}$ and $M^{\perp}_{1}=M_{0}$.  Note that 
\begin{equation}
\begin{split}
\Pi M^{\perp}_{0}&= \{x\in N_{0}: (\Pi^{-1}x, y)\in W_{0} \text{ for all $y\in M_{0}$}\}\\
&=\{x\in N_{0}: (p^{-1}x, \Pi y)\in W_{0} \text{ for all $y\in M_{0}$}\}\\
&=\{x\in N_{0}: (p^{-1}x,  y)_{0}\in W_{0} \text{ for all $y\in M_{0}$}\}\\
&=pM^{\vee}_{0}.
\end{split}
\end{equation}
The inclusion $pM^{\vee}_{0}\subset M_{0}\subset^{2} M^{\vee}_{0}$ follows immediately.  The inclusion $p\tau(M^{\vee}_{0})\subset M_{0}\subset^{2} \tau(M^{\vee}_{0})$ follows from $pM_{0}\subset^{2} VM_{1}\subset^{2} M_{0}$ and $pM_{1}\subset^{2} VM_{0}\subset^{2} M_{1}$ by a similar computation as above.  Conversely if we are given $$M_{0}\in \{M_{0}\subset N_{0}:pM^{\vee}_{0}\subset M_{0}\subset^{2} M^{\vee}_{0}, p\tau(M^{\vee}_{0})\subset M_{0}\subset^{2} \tau(M^{\vee}_{0}) \},$$ then we can define $M= M_{0}\oplus \Pi M^{\vee}_{0}$ and check that it indeed gives an element in $$\{M\subset N: M^{\perp}=M, pM\subset VM\subset^{4} M, \Pi M\subset^{4} M\}.$$
These two maps are clearly inverse to each other and we get the claimed bijections. 
\end{proof}

We also have the following lemma.

\begin{lemma}\label{spin}
Let $M_{0}\in \calM(\FF)$ described in \eqref{RZ-set-para}, then $\dim_{\FF} (M_{0}+\tau(M_{0}))/M_{0}\leq 1$.
\end{lemma}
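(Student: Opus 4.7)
The plan is to realize $(M_0+\tau(M_0))/M_0$ as an isotropic subspace of a two-dimensional non-degenerate symplectic $\FF$-space, so the bound drops out immediately from linear algebra.

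First I would construct an alternating form on $M_0^{\vee}/M_0$ with values in $\FF$. From $pM_0^{\vee}\subset M_0$ and the defining property $(M_0^{\vee},M_0)_0\subset W_0$ of the dual, one gets $p(x,y)_0\in W_0$ for all $x,y\in M_0^{\vee}$, hence $(M_0^{\vee},M_0^{\vee})_0\subset p^{-1}W_0$. The quotient form takes values in $p^{-1}W_0/W_0\cong \FF$, is alternating, and is non-degenerate because $(M_0^{\vee})^{\vee}=M_0$. Since $M_0\subset^{2} M_0^{\vee}$, this is a 2-dimensional non-degenerate symplectic $\FF$-space.

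Next I would show $\tau(M_0)\subset M_0^{\vee}$, so that $M_0+\tau(M_0)$ sits inside $M_0^{\vee}$. The second defining condition $M_0\subset^{2}\tau(M_0^{\vee})=(\tau(M_0))^{\vee}$ gives $(M_0,\tau(M_0))_0\subset W_0$, i.e.\ $\tau(M_0)\subset M_0^{\vee}$ by duality. Then the image of $\tau(M_0)$ in $M_0^{\vee}/M_0$ is precisely the subspace $(M_0+\tau(M_0))/M_0$ whose dimension we want to bound.

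Finally I would verify that this image is isotropic. Using $(\tau x,\tau y)_0=\sigma(x,y)_0$ together with $(M_0,M_0)_0\subset W_0$ (from $M_0\subset M_0^{\vee}$), we obtain
\[
(\tau(M_0),\tau(M_0))_0=\sigma((M_0,M_0)_0)\subset \sigma(W_0)=W_0,
\]
so the image of $\tau(M_0)$ in $M_0^{\vee}/M_0$ lies in the kernel of the symplectic form. An isotropic subspace of a two-dimensional non-degenerate symplectic space has dimension at most one, which gives the claim.

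The only mildly subtle point is step one: correctly identifying the value group of the induced form as $p^{-1}W_0/W_0$ and checking non-degeneracy via $(M_0^{\vee})^{\vee}=M_0$. Once this symplectic target is in hand, the rest is formal, exploiting only $M_0\subset M_0^{\vee}$ and the semilinear invariance $(\tau x,\tau y)_0=\sigma(x,y)_0$.
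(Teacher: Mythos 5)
Your proof is correct, but it takes a genuinely different route from the paper's. The paper argues on the full Dieudonn\'e module $M=M_{0}\oplus M_{1}$: the two maps $\Pi\colon M_{0}/VM_{1}\rightarrow M_{1}/VM_{0}$ and $\Pi\colon M_{1}/VM_{0}\rightarrow M_{0}/VM_{1}$ compose to multiplication by $p$, hence vanish on these $2$-dimensional quotients, so at least one is non-invertible; the compatibility $(\Pi x,y)=(x,\Pi y)$ with the polarization forces the other to be non-invertible as well, and the dimension bound is read off from that. You instead stay entirely inside $N_{0}$: you descend $(\cdot,\cdot)_{0}$ to a non-degenerate alternating $\FF$-valued form on the $2$-dimensional space $M^{\vee}_{0}/M_{0}$ (using $pM^{\vee}_{0}\subset M_{0}\subset^{2}M^{\vee}_{0}$ and $(M^{\vee}_{0})^{\vee}=M_{0}$), deduce $\tau(M_{0})\subset M^{\vee}_{0}$ from $M_{0}\subset\tau(M^{\vee}_{0})=(\tau(M_{0}))^{\vee}$, and show that the image of $\tau(M_{0})$ in $M^{\vee}_{0}/M_{0}$ is totally isotropic via $(\tau x,\tau y)_{0}=\sigma((x,y)_{0})$, whence dimension at most $1$. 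Your argument is more elementary and self-contained: it uses only the lattice description \eqref{RZ-set-para} and the semilinear invariance of $(\cdot,\cdot)_{0}$, with no appeal to $V$, $\Pi$, the Kottwitz condition, or the component $M_{1}$; the paper's argument, on the other hand, simultaneously produces the companion bound for the $M_{1}$-component and stays closer to the $p$-divisible group structure used elsewhere. One small wording point: ``lies in the kernel of the symplectic form'' should read ``is totally isotropic for the induced form'' --- the radical of the non-degenerate induced form is zero --- but your concluding sentence makes clear that isotropy is what you actually prove, so this is only a matter of phrasing.
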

\begin{proof}
Let $M=M_{0}\oplus M_{1}$ be the associated {\Dieu} module. Since we have the composite of $\Pi: M_{0}/VM_{1}\rightarrow M_{1}/VM_{0}$ and $\Pi: M_{1}/VM_{0}\rightarrow M_{0}/VM_{1}$ being multiplication by $p$, at least one of them is not invertible. Note also that we have $(\Pi x, y )=(x, \Pi y)$, the other is also not invertible. On the level of $\FF$-points, this translates to 
\begin{equation*}
\dim_{\FF} VM_{0}+\Pi M_{0}/VM_{0} \leq1 \text{ and }
\dim_{\FF} VM_{1}+\Pi M_{1}/VM_{1} \leq1.
\end{equation*}
\end{proof}

 Next we move on to the Iwahori case $\calL=\calL_{\emptyset}$ and we write $\calM_{\Iw}= \calM_{\calL_{\emptyset}}$. 

\begin{lemma}\label{RZ-set-pre}
The map sending a point $x=(X_{\Lambda}, \iota_{\Lambda}, \rho_{\Lambda}, \lambda_{\Lambda})\in\calM_{\Iw}(\FF)$ to the {\Dieu} modules $$S=M_{\Lambda_{-1}}\subset M=M_{\Lambda_{0}}\subset T=M_{\Lambda_{1}}$$ of $X_{\Lambda_{-1}}\rightarrow X_{\Lambda_{0}}\rightarrow X_{\Lambda_{1}}$ gives a bijection between the set $\calM_{\Iw}(\FF)$ and the set of chain of $W_{0}$-lattices
\begin{equation}\label{Iw-set-pre}
\begin{split}
\{S\subset M\subset T\subset N: & M^{\perp}=M, S=T^{\perp}, \Pi M\subset^{2} S\subset^{2} M \subset^{2} T\subset^{2} \Pi^{-1} M,\\
 & pM\subset VM\subset^{4} M,    pS\subset VS\subset^{4} S,  pT\subset VT\subset^{4} T \}.\\
\end{split}
\end{equation}
\end{lemma}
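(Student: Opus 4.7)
The plan is to mirror the proof of the preceding paramodular lemma, now tracking covariant Dieudonné modules at all three non-equivalent positions $\Lambda_{-1}, \Lambda_0, \Lambda_1$ of the Iwahori chain $\calL_\emptyset$. Every other lattice in $\calL_\emptyset$ is a $\Pi$-power multiple of one of these three, and condition (3) of Definition \ref{RZ-functor} applied to $a = \Pi$ reduces the entire family of $p$-divisible groups to the triple $(S, M, T)$ via the identifications $M_{\Pi^n \Lambda_i} = \Pi^n M_{\Lambda_i}$.

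For the forward direction, given $x = (X_\Lambda, \iota_\Lambda, \rho_\Lambda, \lambda_\Lambda) \in \calM_{\Iw}(\FF)$, use the quasi-isogenies to identify $S = \DD(X_{\Lambda_{-1}})(\FF)$, $M = \DD(X_{\Lambda_0})(\FF)$, and $T = \DD(X_{\Lambda_1})(\FF)$ as $W_0$-lattices inside the ambient isocrystal $N$. Each condition in Definition \ref{RZ-functor} then translates into a condition of \eqref{Iw-set-pre}. Condition (2) applied to the inclusions $\Lambda_{-1} \subset \Lambda_0 \subset \Lambda_1$ gives the colength-$2$ steps $S \subset^2 M \subset^2 T$; combined with the identification $M_{\Pi \Lambda_0} = \Pi M$ and $M_{\Pi^{-1}\Lambda_0} = \Pi^{-1} M$ coming from condition (3), this yields the boundary inclusions $\Pi M \subset^2 S$ and $T \subset^2 \Pi^{-1} M$. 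The Kottwitz condition (4) applied at each of $\Lambda_{-1}, \Lambda_0, \Lambda_1$ gives the three Kottwitz inclusions $pM \subset VM \subset^4 M$, $pS \subset VS \subset^4 S$, and $pT \subset VT \subset^4 T$. Finally, condition (5) combined with $\Lambda_0^\perp = \Lambda_0$ and $\Lambda_1^\perp = \Lambda_{-1}$ produces $M^\perp = M$ and $S = T^\perp$.

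Conversely, given a triple $S \subset M \subset T$ satisfying \eqref{Iw-set-pre}, extend it to a full periodic chain by setting $M_{\Pi^n \Lambda_i} := \Pi^n M_{\Lambda_i}$ for all $n \in \ZZ$ and $i \in \{-1, 0, 1\}$. Covariant Dieudonné theory over the perfect field $\FF$ then produces a $p$-divisible group $X_\Lambda$ with $\calO_D$-action for each $\Lambda$; the lattice inclusions provide the required $\calO_D$-linear isogenies; the common embedding into $N$ supplies the quasi-isogenies $\rho_\Lambda$ to $\XX$; and the self-duality conditions $M^\perp = M$ and $S = T^\perp$ supply the polarization isomorphisms $\lambda_\Lambda : X_\Lambda \to X^\vee_{\Lambda^\perp}$. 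All five conditions of Definition \ref{RZ-functor} are then direct to verify, and the two constructions are manifestly mutually inverse. The only real subtlety is checking that the boundary inclusion $\Pi M \subset^2 S$ in \eqref{Iw-set-pre} faithfully encodes the single chain step $\Lambda_{-2} \subset \Lambda_{-1}$ under the identification $M_{\Lambda_{-2}} = \Pi M$ forced by condition (3); beyond this, the argument is a direct extension of the paramodular proof applied at three successive lattices instead of one.
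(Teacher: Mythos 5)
Your proposal is correct and follows essentially the same route as the paper's own proof: reduce to the three lattices $\Lambda_{-1},\Lambda_{0},\Lambda_{1}$ via condition (3) of Definition \ref{RZ-functor}, read off the chain inclusions from condition (2), the Kottwitz inclusions from condition (4), the dualities $M^{\perp}=M$, $S=T^{\perp}$ from the polarization data and the self-duality $\Lambda_{0}=\Lambda_{0}^{\perp}$, $\Lambda_{-1}=\Lambda_{1}^{\perp}$, and then invert by Dieudonn\'{e} theory over $\FF$. The only (immaterial) difference is bookkeeping of which numbered condition yields the duality statements, where the paper cites condition (1) and you cite condition (5); both rest on the same polarization isomorphisms and the normalization to the degree-zero component.
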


\begin{proof}
Since any lattice $\Lambda\in \calL_{\emptyset}$ is of the form $\Lambda=\Pi^{n}\Lambda_{0}, \Pi^{n}\Lambda_{-1}, \Pi^{n}\Lambda_{1}$ for some $n\in \ZZ$, a point $x=(X_{\Lambda}, \iota_{\Lambda}, \rho_{\Lambda}, \lambda_{\Lambda})$ is determined by $(X_{\Lambda_{i}}, \iota_{\Lambda_{i}}, \rho_{\Lambda_{i}}, \lambda_{\Lambda_{i}})_{i=-1, 0, 1}$ by condition $(3)$ in Definition \ref{RZ-functor}. The equalities $M^{\perp}=M, S=T^{\perp}$ follows from $\Lambda_{0}=\Lambda^{\perp}_{0}, \Lambda_{-1}=\Lambda^{\perp}_{1}$ and condition $(1)$ in Lemma \ref{RZ-functor}.  The inclusions $\Pi M\subset^{2} S\subset^{2} M \subset^{2} T\subset^{2} \Pi^{-1} M$ follows from condition $(2)$ in Lemma \ref{RZ-functor}. The other three inclusions  $pM\subset VM\subset^{4} M,    pS\subset VS\subset^{4} S,  pT\subset VT\subset^{4} T$ follow from the Kottwitz conditions on $\iota_{\Lambda_{i}}$ for $ i=-1,0,1$. Conversely given an element $(S\subset M\subset T)$ in the set \eqref{Iw-set-pre}, it determines a tuple $(X_{\Lambda_{i}}, \iota_{\Lambda_{i}}, \rho_{\Lambda_{i}}, \lambda_{\Lambda_{i}})_{i=-1, 0, 1}$ satisfying all the conditions in Lemma \ref{RZ-functor} and hence determines a unique point in $\calM_{\Iw}(\FF)$ by the above discussion.
\end{proof}

\begin{corollary}
The map sending $S\subset M\subset T$ in \eqref{Iw-set-pre} to their $0$-th components $S_{0}\subset M_{0}\subset T_{0}$ defines a bijection between the set in \eqref{Iw-set-pre} and the set 
\begin{equation}
\{S_{0}\subset M_{0}\subset T_{0}\subset N_{0}: \begin{array}[c]{ccccccccc}
\tau(M_{0})&{\subset^{1}}&\tau(T^{\vee}_{0})&{\subset^{1}}&\tau(M^{\vee}_{0})&{\subset^{1}}&\tau(S^{\vee}_{0})&{\subset^{1}}&\frac{1}{p}\tau(M_{0})\\
\rotatebox{90}{$\subset$}&&\rotatebox{90}{$\subset$}&&\rotatebox{90}{$\subset$}&&\rotatebox{90}{$\subset$} &&\rotatebox{90}{$\subset$}\\
pM^{\vee}_{0}&{\subset^{1}}&S_{0}&{\subset^{1}} &M_{0} &{\subset^{1}} &T_{0} &{\subset^{1}}& M^{\vee}_{0}\\
\rotatebox{90}{$\subset$}&&\rotatebox{90}{$\subset$}&&\rotatebox{90}{$\subset$}&&\rotatebox{90}{$\subset$} &&\rotatebox{90}{$\subset$}\\
p\tau(M_{0})&{\subset^{1}}&p\tau(T^{\vee}_{0})&{\subset^{1}}&p\tau(M^{\vee}_{0})&{\subset^{1}}&p\tau(S^{\vee}_{0})&{\subset^{1}}&\tau(M_{0})
\end{array} \}.
\end{equation}
Here all the vertical inclusions have index $2$. 
\end{corollary}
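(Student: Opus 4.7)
The plan is to proceed in parallel with the proof of the previous corollary, using the $\calO_{D}$-decomposition $N = N_{0} \oplus N_{1}$ and the $\Pi$-action to reduce data on $N$ to data on $N_{0}$. First, the $N_{1}$-components are recovered from the duality: exactly as in the previous proof, the computation $\Pi L_{0}^{\perp} = p L_{0}^{\vee}$ (valid for any $W_{0}$-lattice $L_{0} \subset N_{0}$), combined with $M^{\perp} = M$, gives $M_{1} = \Pi M_{0}^{\vee}$; applying the same formula to $S = T^{\perp}$ and $T = S^{\perp}$ yields $S_{1} = \Pi T_{0}^{\vee}$ and $T_{1} = \Pi S_{0}^{\vee}$. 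Hence $(S \subset M \subset T)$ is determined by its $N_{0}$-components, which gives injectivity of the map and simultaneously constructs a candidate inverse.

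For the forward direction, I would verify that the three rows and the vertical inclusions of the target diagram hold for $(S_{0}, M_{0}, T_{0})$. The middle row arises by intersecting $\Pi M \subset^{2} S \subset^{2} M \subset^{2} T \subset^{2} \Pi^{-1} M$ with $N_{0}$ and substituting $\Pi M_{1} = p M_{0}^{\vee}$ and $\Pi^{-1} M_{1} = M_{0}^{\vee}$; each colength-$2$ step in $N$ splits as colength-$1$ on each of $N_{0}$ and $N_{1}$, because the quotient is an $\calO_{D}/\Pi \cong \FF_{p^{2}}$-module of $\FF$-dimension $2$, hence $1$-dimensional over $\FF_{p^{2}}$ with equal $\psi_{0}$- and $\psi_{1}$-eigenspace components. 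For the vertical colength-$2$ inclusions, I would use that for each $L \in \{S, M, T\}$ the \Dieu\ condition $V L \subset L$ together with the Kottwitz determinant condition forces $V L_{0} \subset^{2} L_{1}$ and $V L_{1} \subset^{2} L_{0}$; combined with $FV = p$ this yields $p L_{1} \subset F L_{0} \subset L_{1}$ with both colengths equal to $2$. Translating through $\tau = \Pi^{-1} F$ and the three identifications $\Pi^{-1} M_{1} = M_{0}^{\vee}$, $\Pi^{-1} S_{1} = T_{0}^{\vee}$, $\Pi^{-1} T_{1} = S_{0}^{\vee}$ produces columns such as $p M_{0}^{\vee} \subset \tau(M_{0}) \subset M_{0}^{\vee}$ and $p \tau(T_{0}^{\vee}) \subset S_{0} \subset \tau(T_{0}^{\vee})$, and similarly for the remaining columns. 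Finally, applying $\tau$ (which preserves colengths) to the middle row and invoking $\Pi$-periodicity furnishes the top and bottom rows with their horizontal $\subset^{1}$ labels.

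For the reverse direction, given $(S_{0}, M_{0}, T_{0})$ satisfying the diagram I would define $M = M_{0} \oplus \Pi M_{0}^{\vee}$, $S = S_{0} \oplus \Pi T_{0}^{\vee}$, $T = T_{0} \oplus \Pi S_{0}^{\vee}$, and verify the defining conditions of \eqref{Iw-set-pre} by reversing the computations above. The main obstacle I anticipate is the careful bookkeeping of how a colength-$d$ inclusion in $N$ splits across $N_{0}$ and $N_{1}$, and of how the $\tau$-translates of the middle-row lattices line up with the correct columns of the diagram; the essential input that makes the vertical colengths exactly $2$ (rather than some other partition summing to $4$) is the Kottwitz determinant condition.
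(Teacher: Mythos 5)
Your overall strategy coincides with the paper's: recover the $N_{1}$-components from duality ($M_{1}=\Pi M_{0}^{\vee}$, $S_{1}=\Pi T_{0}^{\vee}$, $T_{1}=\Pi S_{0}^{\vee}$, using $M^{\perp}=M$ and $S=T^{\perp}$), read the rows off the chain condition and the columns off $F$-, $V$-stability plus the Kottwitz condition, and invert by the direct-sum construction $S=S_{0}\oplus\Pi T_{0}^{\vee}$, $M=M_{0}\oplus\Pi M_{0}^{\vee}$, $T=T_{0}\oplus\Pi S_{0}^{\vee}$. However, the one step where you supply your own argument is flawed, and it is precisely the step that needs an argument. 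You claim each colength-$2$ inclusion in $\Pi M\subset^{2}S\subset^{2}M\subset^{2}T\subset^{2}\Pi^{-1}M$ splits as colength $1$ on each of $N_{0}$ and $N_{1}$ because the quotient is an $\calO_{D}/\Pi\cong\FF_{p^{2}}$-module of $\FF$-dimension $2$, ``hence $1$-dimensional over $\FF_{p^{2}}$ with equal eigenspace components''. This is not true: $M/S$ is a module over $\FF_{p^{2}}\otimes_{\FF_{p}}\FF\cong\FF\times\FF$ of $\FF$-dimension $2$, and such a module can be concentrated in a single eigenspace; nothing in this structure rules out, say, $S_{0}=M_{0}$ and $S_{1}\subset^{2}M_{1}$ (you are implicitly conflating $M/S$ with the finite group $\Lambda_{0}/\Lambda_{-1}\cong\FF_{p^{2}}$). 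Indeed, duality together with the chain condition only gives $[M_{0}:S_{0}]\cdot[T_{0}:M_{0}]=p^{2}$, since $[M_{1}:S_{1}]=[M_{0}^{\vee}:T_{0}^{\vee}]=[T_{0}:M_{0}]$; it gives neither the $(1,1)$ split nor the index statement $[M_{0}^{\vee}:M_{0}]=p^{2}$ that you also need (already in the paramodular corollary you import), and which does not follow from $\Pi M\subset^{4}M$ alone.

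What pins the indices down is the Kottwitz (determinant) condition on all three lattices $S,M,T$ combined with the fact that $\tau=\Pi^{-1}F$ is a $\sigma$-linear bijection of $N_{0}$ and therefore preserves indices of lattices. Using $FL_{0}=\Pi\tau(L_{0})$ and $V(\Pi L_{0})=p\tau^{-1}(L_{0})$ for $L_{0}\subset N_{0}$, the condition $VM_{1}\subset^{2}M_{0}$ reads $[M_{0}:p\tau^{-1}(M_{0}^{\vee})]=p^{2}$, and an index count (indices are $\tau$-invariant) gives $[M_{0}^{\vee}:M_{0}]=p^{2}$; likewise $VS_{1}\subset^{2}S_{0}$, i.e. $[S_{0}:p\tau^{-1}(T_{0}^{\vee})]=p^{2}$, gives $[M_{0}:S_{0}]=[T_{0}:M_{0}]$, which together with the product relation forces both indices to equal $p$, i.e. the $(1,1)$ split and hence the horizontal $\subset^{1}$ labels. (Alternatively one can invoke condition (1) of Definition \ref{RZ-functor}, that $\DD(X_{\Lambda})$ is locally isomorphic to $\Lambda\otimes\calO_{S}$, which controls the rank of $\Pi$ on $M_{\Lambda}/pM_{\Lambda}$.) The paper's own proof is terse here, merely attributing rows to the chain condition and columns to the Kottwitz condition, but since you offered a specific justification you should replace it by an argument of this kind; the remainder of your proposal (the duality identifications, the columns, the dual-plus-$\tau$ derivation of the outer rows, and the converse construction) matches the paper's intended proof.
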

\begin{proof}
Here the proof is essentially the same as the one given in Lemma \ref{RZ-set-para}. Notice that all the horizontal rows come from the requirement that $\Pi M\subset^{2} S\subset^{2} M \subset^{2} T\subset^{2} \Pi^{-1} M$ in \eqref{Iw-set-pre} and all the vertical inclusions come from the Kottwitz conditions imposed on $S\subset M\subset T$. Here one also uses the fact that $T^{\perp}=S$. 
\end{proof}

\subsection{Bruhat-Tits stratification of $\calM$} We review the theory of Bruhat-Tits stratification for $\calM$ in \cite{Wang-2}. Consider the derived group $J^{\der}_{b}=\Sp(C)$ of $J_{b}$ and denote by $\mathcal{B}(J^{\der}_{b})$ the \emph{Bruhat-Tits building} of $J^{\der}_{b}$. We fix a base alcove in $\mathcal{B}(J^{\der}_{b})$ and identify the nodes on the affine Dynkin diagram with the vertices of the chosen base alcove. The affine Dynkin diagram in our case is of type $\tilde{C}_{2}$ and is given by
\begin{displaymath}
\xymatrix{\underset{0}\circ \ar@2{->}[r] &\underset{1}\circ &\underset{2}\circ\ar@2{->}[l]}
\end{displaymath}
where the nodes $0$ and $2$ are special and $1$ is not special in the sense that the parahoric subgroup corresponding to $0$ and $2$ are special parahoric subgroups. There is an interpretation of the vertices in terms of lattices in $C$ and this gives the notion of \emph{vertex lattices}. Let $L$ be a lattice in $C$ then we three types of vertex lattices
\begin{equation}
\begin{split}
&\text{vertex lattice of type $0$: } pL^{\vee}\subset^{4} L\subset^{0} L^{\vee};\\
&\text{vertex lattice of type $2$: }  pL^{\vee}\subset^{0} L\subset^{4} L^{\vee};\\
&\text{vertex lattice of type $1$: }  pL^{\vee}\subset^{2} L\subset^{2} L^{\vee}.\\
\end{split}
\end{equation}
For each vertex lattice $L_{i}$ of type $i$ with $i=0, 1, 2$ we can define a $p$-divisible group $\mathbb{X}_{L^{+}_{i}}$ and $\mathbb{X}_{L^{-}_{i}}$and a projective scheme $\calM_{L_{i}}$ such that 
\begin{enumerate}
\item $\calM_{L_{0}}$ classifies the chain of isogenies 
$$X_{\Pi\Lambda_{0}}\rightarrow \mathbb{X}_{L^{-}_{0}}\rightarrow X_{\Lambda_{0}}\rightarrow \mathbb{X}_{L^{+}_{0}}\rightarrow X_{\Pi^{-1}\Lambda_{0}}$$
\item $\calM_{L_{2}}$ classifies the chain of isogenies
$$X_{\Pi\Lambda_{0}}\rightarrow \mathbb{X}_{L^{+}_{2}}\rightarrow X_{\Lambda_{0}}\rightarrow \mathbb{X}_{L^{-}_{2}}\rightarrow X_{\Pi^{-1}\Lambda_{0}}.$$
\item $\calM_{L_{1}}$ classifies those $X_{\Lambda_{0}}$ such that
$$ \mathbb{X}_{L^{+}_{1}}=X_{\Lambda_{0}}=\mathbb{X}_{L^{-}_{1}}.$$
\end{enumerate}
We will refer the reader to \cite[Section 4]{Wang-2} for the definition of the $p$-divisible group $\XX_{L}$ for a vertex lattice $L$. 
The $\FF$-points of theses schemes are given by
\begin{equation}
\begin{split}
&\calM_{L_{0}}(\FF)=\{M_{0}\in\calM(\FF): pL^{\vee}_{0, W_{0}}\subset^{1}pM^{\vee}_{0}\subset^{2}M_{0}\subset^{1}L_{0, W_{0}}\};\\
&\calM_{L_{2}}(\FF)=\{M_{0}\in\calM(\FF): L_{2, W_{0}}\subset^{1}M_{0}\subset^{2}M^{\vee}_{0}\subset^{1}L^{\vee}_{2, W_{0}}\};\\
&\calM_{L_{1}}(\FF)=\{M_{0}\in\calM(\FF): pL^{\vee}_{1, W_{0}}\subset^{0}pM^{\vee}_{0}\subset^{2}M_{0}\subset^{0}L_{1, W_{0}}\}.\\
\end{split}
\end{equation}

Those projective schemes of the form $\calM_{L_{0}}$ and $\calM_{L_{2}}$ are the irreducible components of the scheme $\calM$. They are smooth of dimension $2$. The scheme $\calM_{L_{1}}$ is of dimension $0$ and consists of a  reduced point which can be shown to be \emph{superspecial}. We will refer to these projective schemes as \emph{lattice strata}. 
Next we take the union of the lattice strata and we obtain the \emph{closed Bruhat-Tits strata}.

\begin{equation}
\begin{split}
&\calM_{\{0\}}=\bigcup_{L_{0}}\calM_{L_{0}} \text{ where $L_{0}$ goes through all the vertex lattices of type $0$};\\
&\calM_{\{2\}}=\bigcup_{L_{2}}\calM_{L_{2}} \text{ where $L_{2}$ goes through all the vertex lattices of type $2$};\\
&\calM_{\{1\}}=\bigcup_{L_{1}}\calM_{L_{1}} \text{ where $L_{1}$ goes through all the vertex lattices of type $1$}.\\
\end{split}
\end{equation}
We also record another useful characterization of these strata.

\begin{lemma}\label{BT-description}
\begin{equation}
\begin{split}
&\calM_{\{0\}}(\FF)=\{M_{0}\in \calM: \tau(M_{0}+\tau(M_{0}))=M_{0}+\tau(M_{0})\};\\
&\calM_{\{2\}}(\FF)=\{M_{0}\in \calM: \tau(M_{0}\cap\tau(M_{0}))=M_{0}\cap\tau(M_{0})\};\\
&\calM_{\{1\}}(\FF)=\{M_{0}\in\calM: \tau(M_{0})=M_{0}\}.\\
\end{split}
\end{equation}
\end{lemma}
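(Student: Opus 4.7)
The plan rests on the correspondence between $\tau$-stable $W_0$-lattices in $N_0$ and $\ZZ_p$-lattices in the symplectic space $C = N_0^{\tau=1}$, combined with the defining chain $pM_0^\vee \subset^{2} M_0 \subset^{2} M_0^\vee$ from \eqref{RZ-set-para} and the crucial inequality $\dim_\FF (M_0 + \tau(M_0))/M_0 \leq 1$ from Lemma \ref{spin}. Given $M_0 \in \calM(\FF)$, whenever $M_0 + \tau(M_0)$ (respectively $M_0 \cap \tau(M_0)$, respectively $M_0$) is $\tau$-stable, I descend it to a $\ZZ_p$-lattice $L^+$ (respectively $L^-$, respectively $L$) in $C$, determine its vertex type by an index count, and read off the Bruhat-Tits stratum from the lattice description of $\calM_{L_i}(\FF)$ given above.

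For the case $\calM_{\{1\}}$, $\tau(M_0) = M_0$ means $M_0 = L \otimes_{\ZZ_p} W_0$ for $L := M_0^{\tau=1}$, and the chain $pM_0^\vee \subset^{2} M_0 \subset^{2} M_0^\vee$ descends to $pL^\vee \subset^{2} L \subset^{2} L^\vee$, so $L$ is a vertex lattice of type $1$ and $M_0 \in \calM_L(\FF)$. Conversely, if $M_0 \in \calM_{L_1}(\FF)$ for some type-$1$ vertex lattice $L_1$, the two zero indices in its defining chain force $M_0 = L_{1,W_0}$, which is $\tau$-stable.

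For the case $\calM_{\{0\}}$, suppose $\tau(M_0 + \tau(M_0)) = M_0 + \tau(M_0)$; the subcase $\tau(M_0) = M_0$ reduces to the previous paragraph, so I may assume $\tau(M_0) \neq M_0$, in which case Lemma \ref{spin} gives $\dim_\FF L^+_{W_0}/M_0 = 1$, where $L^+ := (M_0 + \tau(M_0))^{\tau = 1}$. Dualizing with respect to $(\cdot, \cdot)_0$ yields $(L^+)^\vee_{W_0} \subset^{1} M_0^\vee$, hence $p(L^+)^\vee_{W_0} \subset^{1} pM_0^\vee$, and concatenation produces
\[
p(L^+)^\vee_{W_0} \subset^{1} pM_0^\vee \subset^{2} M_0 \subset^{1} L^+_{W_0},
\]
so $p(L^+)^\vee \subset^{4} L^+$ in $C$; of the three admissible vertex types only type $0$ is compatible with this total colength, so $L^+$ is of type $0$ and the same chain exhibits $M_0 \in \calM_{L^+}(\FF)$. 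For the converse, if $M_0 \in \calM_{L_0}(\FF)$ for some type-$0$ vertex lattice $L_0$, then $M_0 \subset^{1} L_{0,W_0}$ and $L_{0,W_0}$ is $\tau$-stable; since no lattice sits strictly between $M_0$ and $L_{0,W_0}$, one has either $\tau(M_0) = M_0$ or $M_0 + \tau(M_0) = L_{0,W_0}$, and in both cases $M_0 + \tau(M_0)$ is $\tau$-stable.

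The case $\calM_{\{2\}}$ is dual: either I repeat the argument with $L^- := (M_0 \cap \tau(M_0))^{\tau=1}$ to derive $L^-_{W_0} \subset^{4} (L^-)^\vee_{W_0}$, which forces $L^-$ to be of type $2$, or I apply the polarization duality $M_0 \mapsto M_0^\vee$, which preserves $\calM(\FF)$, exchanges $\tau$-stable sums with $\tau$-stable intersections, and swaps vertex lattices of type $0$ and type $2$. The main obstacle throughout is the index arithmetic pinning down the vertex type: Lemma \ref{spin} provides the \emph{sharp} upper bound of $1$ on the initial step, and it is this sharpness that makes the chain add up to colength exactly $4$, uniquely determining the type rather than merely bounding it, while also ensuring that no intermediate $\tau$-stable lattice can sneak in between $M_0$ and the candidate $L^\pm_{W_0}$.
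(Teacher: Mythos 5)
Your overall strategy -- descend the $\tau$-stable lattices $M_{0}+\tau(M_{0})$, $M_{0}\cap\tau(M_{0})$, $M_{0}$ to $\ZZ_{p}$-lattices in $C$, use Lemma \ref{spin} to get colength $1$, and read off the vertex type from the resulting four-step chain -- is the right one; the paper itself gives no argument here (it cites \cite[Proposition 3.11]{Wang-2}), and your chain $p(L^{+})^{\vee}_{W_{0}}\subset^{1}pM^{\vee}_{0}\subset^{2}M_{0}\subset^{1}L^{+}_{W_{0}}$ is exactly the membership condition for $\calM_{L^{+}}(\FF)$. But there are two concrete gaps. First, the superspecial subcase is not actually handled: when $\tau(M_{0})=M_{0}$ your first paragraph only shows $M_{0}\in\calM_{\{1\}}$, whereas the asserted equalities force $\calM_{\{1\}}\subset\calM_{\{0\}}\cap\calM_{\{2\}}$, so you must exhibit a type-$0$ (resp.\ type-$2$) vertex lattice adjacent to $L_{1}=M^{\tau=1}_{0}$; this is true (any line in the $2$-dimensional alternating $\FF_{p}$-space $L^{\vee}_{1}/L_{1}$ is isotropic, so a self-dual lattice $L_{1}\subset^{1}L_{0}\subset^{1}L^{\vee}_{1}$ exists, and dually for type $2$), but it needs to be said. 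Second, the step ``only type $0$ is compatible with this total colength'' is not valid for an arbitrary lattice: $p L^{\vee}\subset^{4}L$ only gives equality of covolumes, and there are non-self-dual lattices (elementary divisor pattern $(1,-1)$) satisfying it. You must first check that $L^{+}$ is a vertex lattice, i.e.\ $L^{+}\subset(L^{+})^{\vee}$, which follows from $M_{0}\subset\tau(M^{\vee}_{0})$ (integrality of $(\cdot,\cdot)_{0}$ on $M_{0}+\tau(M_{0})$); similarly for type $2$ you need $p(L^{-})^{\vee}\subset L^{-}$, which follows from $pM^{\vee}_{0}\subset M_{0}$ and $p\tau(M^{\vee}_{0})\subset M_{0}$. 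Only with these inclusions does the colength count pin down the type.

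Two smaller points. Your fallback ``polarization duality $M_{0}\mapsto M^{\vee}_{0}$ preserves $\calM(\FF)$'' is false as stated: the defining chain requires $M_{0}\subset^{2}M^{\vee}_{0}$, which is incompatible with $M^{\vee}_{0}\subset^{2}M_{0}$, so $M^{\vee}_{0}$ does not lie in the set \eqref{RZ-set-para}; the swap of types $0$ and $2$ has to be implemented by a suitable element of $J_{b}(\QQ_{p})$ of odd valuation of the similitude factor, not by naive dualization -- so only your ``repeat the argument with $L^{-}$'' branch stands, and there you should note that $\dim_{\FF}M_{0}/(M_{0}\cap\tau(M_{0}))\leq 1$ is not literally Lemma \ref{spin} but follows from it together with the fact that $M_{0}$ and $\tau(M_{0})$ have equal covolume (itself a consequence of the two colength-$2$ conditions in \eqref{RZ-set-para}). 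The same covolume remark is needed in your converse for $\calM_{\{0\}}$, where the alternative $M_{0}+\tau(M_{0})=M_{0}$ only gives $\tau(M_{0})\subset M_{0}$ and one must upgrade this to equality before concluding $\tau$-stability.
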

\begin{proof}
This follows from the proof of \cite[Proposition 3.11]{Wang-2}.
\end{proof}

The scheme $\calM_{L_{0}}$ admits the \emph{Ekedhal-Oort stratification} given by $$\calM_{L_{0}}=\calM^{\circ}_{L_{0}}\sqcup \calM^{\circ}_{L_{0},\{2\}}\sqcup \calM_{L_{0}, \{1\}}$$ where 

\begin{equation}\label{lattice-stra}
\begin{split}
&\calM^{\circ}_{L_{0}}=\calM_{L_{0}}-(\calM_{\{2\}}\cup\calM_{\{1\}});\\
&\calM^{\circ}_{L_{0}, \{2\}}=(\calM_{L_{0}}\cap\calM_{\{2\}})-\calM_{\{1\}};\\
&\calM_{L_{0}, \{1\}}=\calM_{L_{0}}\cap\calM_{\{1\}}.\\
\end{split}
\end{equation}
Note an irreducible component of $\calM^{\circ}_{L_{0},\{2\}}$ is given by 
\begin{equation}
\begin{split}
\calM^{\circ}_{L_{0},L_{2}}=&\{M_{0}\in\calM_{L_{0}}: L_{2,W_{0}}\subset^{1} M_{0}\subset^{1} L_{0, W_{0}}\\
&\text{ and $M_{0}\neq L_{1, W_{0}}$ for any vertex lattice $L_{1}$ of type $1$}\}.\\
\end{split}
\end{equation}
Similarly the scheme $\calM_{L_{2}}$ admits the \emph{Ekedhal-Oort stratification} given by $$\calM_{L_{2}}=\calM^{\circ}_{L_{2}}\sqcup \calM^{\circ}_{L_{2},\{0\}}\sqcup \calM_{L_{2}, \{1\}.}$$
Here the strata are defined similarly as in the case of  $\calM_{L_{0}}$. 
We define next the open Bruhat-Tits strata by
\begin{equation}\label{BT-strata}
\begin{split}
&\calM^{\circ}_{\{0\}}=\calM_{\{0\}}-\calM_{\{2\}};\\
&\calM^{\circ}_{\{2\}}=\calM_{\{2\}}-\calM_{\{0\}};\\
&\calM^{\circ}_{\{02\}}=(\calM_{\{0\}}\cap\calM_{\{2\}})-\calM_{\{1\}}.\\
\end{split}
\end{equation}

The following theorem summarizes the above discussion about the geometry of the scheme of $\calM$. 

\begin{theorem}
The scheme $\calM$ can be decomposed as
$$\calM=\calM^{\circ}_{\{0\}}\sqcup\calM^{\circ}_{\{2\}}\sqcup\calM^{\circ}_{\{02\}}\sqcup\calM_{\{1\}}$$\
which is called the Bruhat-Tits stratification. 
\begin{enumerate}
\item The closure of the scheme $\calM^{\circ}_{\{0\}}$ is $\calM_{\{0\}}$ and the complement $$\calM_{\{0\}}-\calM^{\circ}_{\{0\}}=\calM^{\circ}_{\{02\}}\sqcup\calM_{\{1\}}.$$ The irreducible components of $\calM_{\{0\}}$ are of the form $$x^{p}_{3}x_{0}-x^{p}_{0}x_{3}+x^{p}_{2}x_{1}-x^{p}_{1}x_{2}=0$$ for a projective coordinate $[x_{0}:x_{1}:x_{2}:x_{3}]$ of $\PP^{3}$. 
\item  The scheme $\calM^{\circ}_{\{2\}}$ is isomorphic to $\calM^{\circ}_{\{0\}}$ and therefore the irreducible components of $\calM_{\{2\}}$ are of the form $$x^{p}_{3}x_{0}-x^{p}_{0}x_{3}+x^{p}_{2}x_{1}-x^{p}_{1}x_{2}=0$$ for a projective coordinate $[x_{0}:x_{1}:x_{2}:x_{3}]$ of $\PP^{3}$. 
\item  The closure of the scheme $\calM^{\circ}_{\{02\}}$ is $\calM_{\{02\}}$ whose irreducible components are isomorphic to $\PP^{1}$. The complement of $\calM_{\{02\}}-\calM^{\circ}_{\{02\}}$ is $\calM_{\{1\}}$ which consists of all the superspecial points on $\calM$.
\end{enumerate}
\end{theorem}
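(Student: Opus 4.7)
The plan is to combine the lattice-theoretic presentation of $\calM(\FF)$ from the preceding corollaries with the Bruhat-Tits stratum characterizations in Lemma \ref{BT-description}; this is essentially a recapitulation of the main result of \cite{Wang-2}, whose proof should be outlined for the reader's convenience.

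First I would establish the decomposition. For any $M_0 \in \calM(\FF)$, Lemma \ref{spin} forces $\dim_\FF(M_0+\tau(M_0))/M_0 \in \{0,1\}$. If the dimension is zero then $\tau(M_0) = M_0$, placing $M_0 \in \calM_{\{1\}}$. Otherwise a case analysis on whether $M_0+\tau(M_0)$ and/or $M_0 \cap \tau(M_0)$ is $\tau$-stable produces exactly one of the three open strata $\calM^\circ_{\{0\}}, \calM^\circ_{\{2\}}, \calM^\circ_{\{02\}}$ via Lemma \ref{BT-description}. Disjointness is automatic, and exhaustion follows because at least one of sum or intersection must be $\tau$-stable by dimension counting in $N_0/\tau$-closure.

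Next I would work out the irreducible components of $\calM_{\{0\}}$. Fix a vertex lattice $L_0$ of type $0$ and consider the quotient $\bar L_0 := L_{0,W_0}/pL^\vee_{0,W_0}$, a $4$-dimensional $\FF$-vector space carrying an induced symplectic form and a Frobenius-semilinear operator $\bar\tau$. Under the description of $\calM_{L_0}(\FF)$, each $M_0$ corresponds to a Lagrangian $\bar M_0 \subset \bar L_0$ satisfying $\dim(\bar M_0 + \bar\tau(\bar M_0))\leq 3$. This realizes $\calM_{L_0}$ as a classical Deligne-Lusztig variety in the Lagrangian Grassmannian of $\Sp_4$ associated to the longest Weyl element; by a direct Pl\"ucker-coordinate calculation it is cut out in $\PP^3$ by
$$x^p_3 x_0 - x^p_0 x_3 + x^p_2 x_1 - x^p_1 x_2 = 0,$$
which is smooth projective of dimension two. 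Part (1) follows by taking the union over all type-$0$ vertex lattices $L_0$ and identifying the complement using the Ekedahl-Oort stratification \eqref{lattice-stra}. Part (2) then follows from the duality $M_0 \mapsto \Pi M_0^\vee$, which swaps vertex lattices of types $0$ and $2$ and induces an isomorphism $\calM_{\{0\}} \cong \calM_{\{2\}}$ restricting to $\calM^\circ_{\{0\}} \cong \calM^\circ_{\{2\}}$.

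For part (3), I would fix adjacent vertex lattices $L_0 \supset L_2$ and analyze $\calM^\circ_{L_0, L_2} \subset \calM^\circ_{\{02\}}$: the Lagrangian condition on the $2$-dimensional quotient $L_{0,W_0}/L_{2,W_0}$ produces a Deligne-Lusztig subvariety of $\PP^1$, whose closure is the full $\PP^1$. The boundary locus $\calM_{\{02\}} - \calM^\circ_{\{02\}}$ consists of $\tau$-fixed points, which by Lemma \ref{BT-description} is precisely $\calM_{\{1\}}$; these are superspecial because $\tau(M_0) = M_0$ forces the associated $p$-divisible group to be $\bar\FF_p$-isogenous to a product of supersingular elliptic curves with $a$-invariant $2$. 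The main technical obstacle is the Pl\"ucker-coordinate identification in the second step of the $\Sp_4$ Deligne-Lusztig variety with the explicit Fermat-type hypersurface; the remaining arguments amount to bookkeeping given the lattice descriptions and the characterization in Lemma \ref{BT-description}.
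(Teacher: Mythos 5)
The paper does not actually prove this theorem: it is stated as a summary of \cite{Wang-2}, and the one ingredient recorded beforehand, Lemma \ref{BT-description}, is itself quoted from the proof of \cite[Proposition 3.11]{Wang-2}. Your outline follows the same lattice-theoretic route as \cite{Wang-2} and the surrounding text (Lemma \ref{spin}, the $\tau$-stability dichotomy for $M_{0}+\tau(M_{0})$ and $M_{0}\cap\tau(M_{0})$, analysis inside a fixed vertex lattice, $\calM_{L_{0},L_{2}}\cong\PP^{1}$, the $\tau$-fixed points being the superspecial ones), so the strategy is the right one; but two steps are wrong or unsupported as written, and the first is exactly what you single out as the main technical step.

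First, for a type $0$ (self-dual) vertex lattice $L_{0}$, a point $M_{0}\in\calM_{L_{0}}(\FF)$ satisfies $pL_{0,W_{0}}\subset^{1}pM^{\vee}_{0}\subset^{2}M_{0}\subset^{1}L_{0,W_{0}}$, so the datum that $M_{0}$ determines inside the $4$-dimensional symplectic $\FF$-space $\bar L_{0}=L_{0,W_{0}}/pL_{0,W_{0}}$ is the hyperplane $M_{0}/pL_{0,W_{0}}$, equivalently its annihilator line $\ell=pM^{\vee}_{0}/pL_{0,W_{0}}$; no Lagrangian is attached to $M_{0}$, so the proposed identification of $\calM_{L_{0}}$ with a locus of Lagrangians $\bar M_{0}$ with $\dim(\bar M_{0}+\bar\tau(\bar M_{0}))\leq 3$ is not even defined, and for the $3$-dimensional image the condition $\dim(\bar M_{0}+\bar\tau(\bar M_{0}))\leq 3$ would force $\bar\tau$-invariance, which is far too strong. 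The computation that actually yields the displayed surface is: the condition $pM^{\vee}_{0}\subset M_{0}$ is automatic (an alternating form makes every hyperplane contain its annihilator line), while the remaining condition $p\tau(M^{\vee}_{0})\subset M_{0}$ becomes the isotropy condition $(\ell,\bar\tau(\ell))_{0}=0$; writing $\ell=[x_{0}:x_{1}:x_{2}:x_{3}]$ in a symplectic basis of $C$-rational vectors, where $\bar\tau$ acts by $x\mapsto x^{(p)}$, gives precisely $x^{p}_{3}x_{0}-x^{p}_{0}x_{3}+x^{p}_{2}x_{1}-x^{p}_{1}x_{2}=0$. Relatedly, the asserted duality $M_{0}\mapsto\Pi M^{\vee}_{0}$ is not a self-map of the lattice model, since $\Pi M^{\vee}_{0}$ lies in $N_{1}$ rather than $N_{0}$; the isomorphism swapping type $0$ and type $2$ strata must instead be induced by an element of $J_{b}(\QQ_{p})$ whose similitude factor has odd valuation, composed with the identification $\calN^{(1)}_{\calL}\cong\calN^{(0)}_{\calL}$. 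Second, the exhaustion statement, namely that for $M_{0}\neq\tau(M_{0})$ at least one of $M_{0}+\tau(M_{0})$ and $M_{0}\cap\tau(M_{0})$ is $\tau$-stable, so that $\calM=\calM_{\{0\}}\cup\calM_{\{2\}}$, is the genuine content of \cite[Proposition 3.11]{Wang-2}; the phrase about dimension counting in the $\tau$-closure is not an argument, and this is where the real work behind the decomposition lies. With these points repaired your sketch matches the argument of \cite{Wang-2} that the present paper is content to cite.
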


\section{The structure of the scheme $\calM_{\Iw}$} 
We now move on to describe the space $\calM_{\Iw}$. We denote by $$\pi_{\{1\}}: \calM_{\Iw}\rightarrow \calM$$ the natural map.  If $x=(X_{\Lambda}, \iota_{\Lambda}, \rho_{\Lambda}, \lambda_{\Lambda})\in \calM_{\Iw}$ with $\Lambda\in \calL_{\emptyset}$, then $\pi_{\{1\}}(x)= (X_{\Lambda^{\prime}}, \iota_{\Lambda^{\prime}}, \rho_{\Lambda^{\prime}}, \lambda_{\Lambda}^{\prime})$ with $\Lambda^{\prime}=\pi_{\{1\}}(\Lambda)$ as defined in \eqref{piL}. Our strategy to describe $\calM_{\Iw}$ is to describe the fiber of the Bruhat-Tits strata of $\calM$ under $\pi_{\{1\}}$. Therefore we set 
\begin{equation}
\begin{split}
&\mathcal{Y}_{\{1\}}= \pi_{\{1\}}^{-1}(\calM_{\{1\}});\\
&\mathcal{Y}_{\{0\}}= \pi_{\{1\}}^{-1}(\calM^{\circ}_{\{0\}});\\
&\mathcal{Y}_{\{2\}}= \pi_{\{1\}}^{-1}(\calM^{\circ}_{\{2\}});\\
&\mathcal{Y}_{\{02\}}= \pi_{\{1\}}^{-1}(\calM^{\circ}_{\{02\}}).
\end{split}
\end{equation}

\subsection{The description of $\mathcal{Y}_{\{1\}}$} Let $x\in \calM_{\{1\}}(\FF)$ be a $\FF$-point. We would like to first understand the set  $\pi^{-1}_{\{1\}}(x)$. 

\begin{lemma}\label{fiber1}
There is a bijection between $\pi_{\{1\}}^{-1}(x)$ and $\PP^{1}(\FF)\times\PP^{1}(\FF)$. 
\end{lemma}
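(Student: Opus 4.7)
The plan is to use the lattice description of $\calM_{\Iw}(\FF)$ from the corollary after Lemma~\ref{RZ-set-pre} (Iwahori case), which identifies $\pione^{-1}(x)$ with the set of pairs $(S_0, T_0)$ of $W_0$-lattices in $N_0$ satisfying the inclusions of the $3\times 5$ diagram displayed there, with the middle entry $M_0$ fixed by the point $x$.

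By Lemma~\ref{BT-description} the superspecial point $x$ corresponds to a lattice with $\tau(M_0) = M_0$. Since $\tau$ is a $\sigma$-semilinear automorphism of $(N_0, (\cdot,\cdot)_0)$ satisfying $(\tau z_1, \tau z_2)_0 = \sigma((z_1, z_2)_0)$, one deduces $(\tau(L))^{\vee} = \tau(L^{\vee})$ for any lattice $L \subset N_0$, so the four corner lattices $pM_0^{\vee}, M_0, M_0^{\vee}, \tfrac{1}{p}M_0$ of the diagram are all $\tau$-stable.

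Under this $\tau$-stability, I would verify that every vertical inclusion of index $2$ in the diagram is an automatic consequence of the horizontal middle-row chain $pM_0^{\vee} \subset^1 S_0 \subset^1 M_0 \subset^1 T_0 \subset^1 M_0^{\vee}$. For example, in the column containing $S_0$ and $\tau(T_0^{\vee})$, dualizing the middle row places $T_0^{\vee}$, and hence $\tau(T_0^{\vee})$, inside $M_0^{\vee}$ above $M_0$ with colength $1$; combining with $S_0 \subset^1 M_0$ gives $S_0 \subset^2 \tau(T_0^{\vee})$. All other vertical inclusions follow by an analogous column-by-column check, using only the $\tau$-invariance of the corner lattices.

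The pairs $(S_0, T_0)$ therefore correspond to independent choices: $S_0$ is any $W_0$-lattice with $pM_0^{\vee} \subset^1 S_0 \subset^1 M_0$, which, via the identification $M_0/pM_0^{\vee} \cong \FF^2$, corresponds to a point of $\PP^1(\FF)$; symmetrically $T_0$ corresponds to a point of $\PP^1(\FF)$ via $M_0^{\vee}/M_0 \cong \FF^2$. This yields the claimed bijection $\pione^{-1}(x) \cong \PP^1(\FF) \times \PP^1(\FF)$. The main technical obstacle is precisely the column-by-column verification described in the previous paragraph, showing that no residual coupling between $S_0$ and $T_0$ survives beyond the two decoupled $\PP^1$-parametrizations.
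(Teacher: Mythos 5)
Your proposal is correct and takes essentially the same route as the paper: both reduce the fiber to the pairs $(S_{0},T_{0})$ in the lattice web with $M_{0}$ fixed, use $\tau(M_{0})=M_{0}$ (hence $\tau(M_{0}^{\vee})=M_{0}^{\vee}$, via compatibility of $\tau$ with the form) to see that the vertical conditions coupling $S_{0}$ and $T_{0}$ hold automatically, and conclude that $S_{0}$ and $T_{0}$ vary independently over $\PP(M_{0}/pM_{0}^{\vee})(\FF)$ and $\PP(M_{0}^{\vee}/M_{0})(\FF)$. The paper phrases this as a pair of explicit mutually inverse maps and checks the inclusions $p\tau(T_{0}^{\vee})\subset S_{0}\subset\tau(T_{0}^{\vee})$ and $p\tau(S_{0}^{\vee})\subset T_{0}\subset\tau(S_{0}^{\vee})$ directly, which is the same verification you describe column by column.
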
 
\begin{proof}
By Lemma \ref{RZ-set-para}, $x$ corresponds to $M_{0}$ that fits in 
\begin{equation}
\begin{split}
& pM^{\vee}_{0}\subset^{2} M_{0} \subset^{2} M^{\vee}_{0}\\
& p\tau(M^{\vee}_{0})\subset^{2} M_{0} \subset^{2} \tau(M^{\vee}_{0}).\\
\end{split}
\end{equation}

Then we need to find all possible $S_{0}$ and $T_{0}$ that fit in the following web
\begin{equation}\label{web}
\begin{array}[c]{ccccccccc}
\tau(M_{0})&{\subset^{1}}&\tau(T^{\vee}_{0})&{\subset^{1}}&\tau(M^{\vee}_{0})&{\subset^{1}}&\tau(S^{\vee}_{0})&{\subset^{1}}&\frac{1}{p}\tau(M_{0})\\
\rotatebox{90}{$\subset$}&&\rotatebox{90}{$\subset$}&&\rotatebox{90}{$\subset$}&&\rotatebox{90}{$\subset$} &&\rotatebox{90}{$\subset$}\\
pM^{\vee}_{0}&{\subset^{1}}&S_{0}&{\subset^{1}} &M_{0} &{\subset^{1}} &T_{0} &{\subset^{1}}& M^{\vee}_{0}\\
\rotatebox{90}{$\subset$}&&\rotatebox{90}{$\subset$}&&\rotatebox{90}{$\subset$}&&\rotatebox{90}{$\subset$} &&\rotatebox{90}{$\subset$}\\
p\tau(M_{0})&{\subset^{1}}&p\tau(T^{\vee}_{0})&{\subset^{1}}&p\tau(M^{\vee}_{0})&{\subset^{1}}&p\tau(S^{\vee}_{0})&{\subset^{1}}&\tau(M_{0})
\end{array} 
\end{equation}

Given $y\in \pi^{-1}(x)$, it determines $S_{0}$ and $T_{0}$ as above. Then sending them to 
\begin{equation*}
\begin{split}
&\bar{S}_{0}=S_{0}/pM^{\vee}_{0}\subset M_{0}/pM^{\vee}_{0}\\ 
&\bar{T}_{0}=T_{0}/M_{0}\subset M^{\vee}_{0}/M_{0}.\\
\end{split}
\end{equation*}
gives a map from $\pi^{-1}(x)$ to $\PP(M_{0}/pM^{\vee}_{0})(\FF)\times\PP(M^{\vee}_{0}/M_{0})(\FF)$. 

Conversely given $(\bar{S}_{0}, \bar{T}_{0})\in \PP(M_{0}/pM^{\vee}_{0})\times\PP(M^{\vee}_{0}/M_{0})(\FF)$, let 
\begin{equation*}
\begin{split}
&pM^{\vee}_{0}\subset S_{0}\subset M_{0}\\
&M_{0}\subset T_{0}\subset M^{\vee}_{0}\\
\end{split}
\end{equation*}
be the preimage of 
\begin{equation*}
\begin{split}
&\bar{S}_{0}\subset M_{0}/pM^{\vee}_{0} \\
& \bar{T}_{0}\subset M^{\vee}_{0}/M_{0}\\
\end{split}
\end{equation*}
under the natural reduction map. Since $M_{0}\subset T_{0}\subset M^{\vee}_{0}$, $p\tau(M_{0})\subset p\tau(T^{\vee}_{0})\subset p\tau(M^{\vee}_{0})$.
Since $M_{0}\in \calM_{\{1\}}$, we have $\tau(M_{0})=M_{0}$. It follows that $p\tau(T^{\vee}_{0})\subset p M^{\vee}_{0}\subset S_{0}$.  Similarly, since $pM^{\vee}_{0}\subset S_{0}\subset M_{0}$, $S_{0}\subset M_{0}=\tau(M_{0})\subset \tau(T^{\vee}_{0})$. This verifies the inclusion $p\tau(T^{\vee}_{0})\subset S_{0}\subset  \tau(T^{\vee}_{0})$. One verifies the inclusion $p\tau(S^{\vee}_{0})\subset T_{0}\subset S^{\vee}_{0}$ completely in a similar way. 
This finishes the proof as $S_{0}$ and $T_{0}$ fit in the web above and the two maps we have defined are mutually inverse to each other. 
\end{proof}

\begin{remark}\label{windremark}
Using the theory of windows for displays of $p$-divisible groups \cite{Zink-pro99},  one can replace $\FF$ by any field extension $k$ in the above lemma. 
\end{remark}

We now describe the global structure of $\mathcal{Y}_{\{1\}}$.  Let $L_{1}$ be a vertex lattice of type $1$, we denote by $\mathcal{Y}_{L_{1}}=\pi^{-1}_{\{1\}}(\mathcal{M}_{L_{1}})$. 

\begin{theorem}\label{YL1}
Let $\mathcal{Y}^{\prime}_{\{1\}}$ be the $\PP^{1}\times \PP^{1}$-bundle over $\calM_{\{1\}}$. There is an isomorphism between $\mathcal{Y}_{\{1\}}$ and $\mathcal{Y}^{\prime}_{\{1\}}$.
\end{theorem}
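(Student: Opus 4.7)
The plan is to reduce the statement to a pointwise description: since $\calM_{\{1\}}$ is zero-dimensional, consisting of finitely many reduced superspecial $\FF$-points, and $\mathcal{Y}'_{\{1\}}$ is the trivial $\PP^1 \times \PP^1$-bundle over this finite set, it suffices to prove that for each $x \in \calM_{\{1\}}(\FF)$ the scheme-theoretic fiber $\pi_{\{1\}}^{-1}(x)$ is isomorphic to $\PP(M_0/pM_0^\vee) \times \PP(M_0^\vee/M_0) \cong \PP^1 \times \PP^1$, where $M_0$ is the Dieudonn\'e module attached to $x$.

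To construct the candidate isomorphism, I would build a natural transformation $\Psi: \mathcal{Y}_{\{1\}} \to \mathcal{Y}'_{\{1\}}$ on $S$-valued points, where $S$ is any $W_0$-scheme in $\Nilp$ mapping to $\calM_{\{1\}}$ (equivalently, lying scheme-theoretically over a single geometric point $x$). Given $y \in \mathcal{Y}_{\{1\}}(S)$ corresponding to a chain $X_{-1} \to X_0 \to X_1$ with polarizations and $\calO_D$-actions, pass to displays/windows in the sense of Zink to extract a chain of locally free $\calO_S$-modules $S \subset M \subset T$ with $0$-components $S_0 \subset M_0 \subset T_0$ fitting into the base change of the web \eqref{web}. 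The map $\Psi$ then sends $y$ to the pair of rank one locally free quotients $(S_0/pM_0^\vee,\, T_0/M_0)$, viewed as a morphism $S \to \PP(M_0/pM_0^\vee) \times \PP(M_0^\vee/M_0)$. On $\FF$-points this recovers exactly the bijection established in Lemma \ref{fiber1}.

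For the inverse $\Phi$, I would exploit the superspecial condition $\tau(M_0) = M_0$ from Lemma \ref{BT-description}: starting from a pair of line subbundles $(\bar{S}_0, \bar{T}_0)$, define $S_0$ and $T_0$ as the preimages under the reduction maps $M_0 \to M_0/pM_0^\vee$ and $M_0^\vee \to M_0^\vee/M_0$, extend by the periodicity relations $S = S_0 \oplus \Pi S_0^\vee$ etc., and verify the remaining horizontal inclusions in \eqref{web} exactly as in Lemma \ref{fiber1}; here the hypothesis $\tau(M_0) = M_0$ is what makes every required inclusion automatic. Grothendieck--Messing/window equivalence then promotes this lattice-theoretic datum to a genuine chain of $p$-divisible groups with the structure of Definition \ref{RZ-functor}, giving the inverse morphism. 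The main technical hurdle will be making sense of the construction universally over $S$ rather than over a field: one must justify that the window-theoretic inverse is functorial and does not require any flatness or smoothness assumption beyond what is automatic. This is precisely the content flagged in Remark \ref{windremark}, and once the inverse is constructed functorially, $\Phi \circ \Psi$ and $\Psi \circ \Phi$ are the identity on points of arbitrary field extensions $k/\FF$, hence on the schemes themselves since both sides are locally of finite type over $W_0$ with underlying reduced structures.
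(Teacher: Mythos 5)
Your forward map $\Psi$ is essentially the paper's morphism $\alpha_{\{1\}}$ (send a point to the pair of rank-one sub/quotient bundles cut out by $\DD_{\Lambda_{-1},0}$ and $\DD_{\Lambda_{1},0}$ inside the crystal of the middle term), and your reduction to the fibers over the finitely many reduced points of $\calM_{\{1\}}$ matches the paper. The gap is in how you close the argument. You propose to build an explicit inverse $\Phi$ over an arbitrary base: from a pair of line subbundles $(\bar S_0,\bar T_0)$ over an $\FF$-algebra (or a scheme $S\in\Nilp$) you must manufacture an actual chain of $p$-divisible groups $X_{\Lambda_{-1}}\to X_{\Lambda_{0}}\to X_{\Lambda_{1}}$ with $\calO_D$-action, polarization and quasi-isogeny satisfying all the conditions of Definition \ref{RZ-functor}. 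You yourself flag this as the main technical hurdle, but then assert it is ``precisely the content'' of Remark \ref{windremark}. It is not: that remark only says that the lattice computation of Lemma \ref{fiber1} remains valid with $\FF$ replaced by an arbitrary field extension $k$, i.e.\ it upgrades the bijection on $\FF$-points to a bijection on $k$-points. It gives no mechanism for passing from semilinear-algebra data to $p$-divisible groups over a general (possibly non-reduced) base, which is exactly where the difficulty lies; and Grothendieck--Messing does not apply as invoked, since there is no divided-power thickening in play and $S_0,T_0$ are not presented as Hodge filtrations of a fixed crystal over $S$. So as written, $\Phi$ does not exist yet, and your otherwise correct final step (two morphisms of reduced schemes locally of finite type that agree on all field-valued points coincide) has nothing to apply to.

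The paper avoids constructing any inverse. It restricts $\alpha_{\{1\}}$ to each fiber $\pi_{\{1\}}^{-1}(x)$, observes that this restriction is a proper morphism (properness of $\pi_{\{1\}}$) between integral schemes with normal target $\PP^{1}\times\PP^{1}$, uses Lemma \ref{fiber1} together with Remark \ref{windremark} to see it is bijective on $k$-points for every field extension $k/\FF$, and concludes by Zariski's main theorem. Your proof becomes correct if you replace the construction of $\Phi$ by this properness-plus-ZMT argument: everything you would need beyond your $\Psi$ is already contained in the pointwise statements you quote, and no window-theoretic moduli description over non-field bases is required.
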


\begin{proof}
We first define a morphism $\alpha_{\{1\}}: \mathcal{Y}_{\{1\}}\rightarrow \mathcal{Y}^{\prime}_{\{1\}}$.  Let $R$ be a $\FF$-algebra. Let $y=(X_{\Lambda}, \iota_{\Lambda}, \rho_{\Lambda}, \lambda_{\Lambda})\in \mathcal{Y}_{\{1\}}(R)$ and for each $\Lambda$ we write $\mathbb{D}_{\Lambda}$ for the evaluation of the {\Dieu} crystal of $X_{\Lambda}$. The action of $\calO_{D}$ on $X_{\Lambda}$ induces a decomposition $\DD_{\Lambda}=\DD_{\Lambda, 0}\oplus \DD_{\Lambda, 1}$. Since we have the isogenies $$X_{\Pi\Lambda_{0}}\rightarrow X_{\Lambda_{-1}}\rightarrow X_{\Lambda_{0}}\rightarrow X_{\Lambda_{1}}\rightarrow X_{\Pi^{-1}\Lambda_{0}},$$ it induces the following inclusions
\begin{equation*}
\begin{split}
&\DD_{\Pi\Lambda_{0}, 0}\subset \DD_{\Lambda_{-1}, 0}\subset \DD_{\Lambda_{0},0}\\
&\DD_{\Lambda_{0}, 0}\subset \DD_{\Lambda_{1}, 0}\subset \DD_{\Pi^{-1}\Lambda_{0},0}.\\
\end{split}
\end{equation*}
Then sending $y$ to $( \DD_{\Lambda_{-1}, 0}/ \DD_{\Pi\Lambda_{0},0}\subset \DD_{\Lambda_{0},0}/\DD_{\Pi\Lambda_{0}, 0},   \DD_{\Lambda_{1}, 0}/\DD_{\Lambda_{0}, 0} \subset \DD_{\Pi^{-1}\Lambda_{0},0}/\DD_{\Lambda_{0}, 0})\in \mathcal{Y}^{\prime}_{\{1\}}$ gives the desired map $\alpha_{\{1\}}$. Since $\calM_{\{1\}}$ is a discrete set of points, to prove the theorem we can restrict $\alpha_{\{1\}}$ to each fiber $\pi_{\{1\}}^{-1}(x)$ with $x\in\calM_{\{1\}}$. It is clear that $\pi_{\{1\}}$ is proper and hence $\pi_{\{1\}}^{-1}(x)$ is proper.  Therefore $\alpha_{\{1\}}$ restricted to $\pi^{-1}_{\{1\}}(x)$ is a proper morphism between integral schemes. Moreover Lemma \ref{fiber1} and Remark \ref{windremark} imply that $\alpha_{\{1\}}$ is bijective on points and the target is normal. Therefore $\alpha_{\{1\}}$ is an isomorphism when restricted to $\pi^{-1}(x)$ by Zariski's main theorem. Hence $\alpha_{\{1\}}$ itself is an isomorphism. 
\end{proof}

\subsection{The description of $\mathcal{Y}_{\{2\}}$} Let $x\in \calM^{\circ}_{\{2\}}(\FF)$ be a $\FF$-point. We would like to first understand the set  $\pi^{-1}_{\{1\}}(x)$.

\begin{lemma}\label{fiber2}
There is a bijection between $\pi^{-1}_{\{1\}}(x)$ and $\PP^{1}(\FF)$.
\end{lemma}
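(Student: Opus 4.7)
The plan parallels the proof of Lemma \ref{fiber1}, adapted to the fact that $\tau(M_{0}) \neq M_{0}$. Write $M_{0}$ for the {\Dieu} lattice attached to $x$. Since $x \in \calM^{\circ}_{\{2\}}(\FF)$, there is a unique type-$2$ vertex lattice $L_{2}$ with $x \in \calM^{\circ}_{L_{2}}$. A preliminary observation is the equality $L_{2, W_{0}} = M_{0} \cap \tau(M_{0}) =: P$: both lattices are $\tau$-stable (the first because $L_{2}$ is defined over $\QQ_{p}$, the second by Lemma \ref{BT-description}) and of colength $1$ in $M_{0}$, and $L_{2, W_{0}} \subset M_{0}$ combined with $\tau(L_{2, W_{0}}) = L_{2, W_{0}}$ gives $L_{2, W_{0}} \subset \tau(M_{0})$, hence $L_{2, W_{0}} \subset P$, with equality by colength. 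From $L_{2, W_{0}} \subset^{1} M_{0} \subset^{2} M^{\vee}_{0} \subset^{1} L^{\vee}_{2, W_{0}}$ together with the type-$2$ condition $L_{2, W_{0}} = pL^{\vee}_{2, W_{0}}$, one further finds $pM^{\vee}_{0} \subset^{1} L_{2, W_{0}}$, so $L_{2, W_{0}}/pM^{\vee}_{0}$ is a canonical point of $\PP(M_{0}/pM^{\vee}_{0})$.

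First I would construct an injection $\PP(M^{\vee}_{0}/M_{0}) \cong \PP^{1}(\FF) \hookrightarrow \pi_{\{1\}}^{-1}(x)$ by setting $S_{0} := L_{2, W_{0}}$ and letting $T_{0}$ vary over all lattices with $M_{0} \subset^{1} T_{0} \subset M^{\vee}_{0}$. Because $L_{2, W_{0}}$ and $L^{\vee}_{2, W_{0}}$ are $\tau$-stable, every $\tau$-twisted inclusion in the web of the corollary collapses to a $\tau$-free one: for instance, column $4$'s constraint $p\tau(S^{\vee}_{0}) \subset T_{0} \subset \tau(S^{\vee}_{0})$ reduces to $L_{2, W_{0}} \subset T_{0} \subset L^{\vee}_{2, W_{0}}$, which is automatic from the chain $L_{2, W_{0}} \subset M_{0} \subset T_{0} \subset M^{\vee}_{0} \subset L^{\vee}_{2, W_{0}}$, and column $2$ is handled symmetrically.

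Second I would show the map is surjective by forcing $S_{0} = L_{2, W_{0}}$ for any $(S_{0}, T_{0}) \in \pi_{\{1\}}^{-1}(x)$. From the column $4$ upper inclusion $T_{0} \subset \tau(S^{\vee}_{0})$ and $T_{0} \supset M_{0}$ one obtains $M_{0} \subset \tau(S^{\vee}_{0}) \cap M^{\vee}_{0}$; this intersection lies between $Q^{\vee} := M^{\vee}_{0} \cap \tau(M^{\vee}_{0})$ and $M^{\vee}_{0}$, and since $M^{\vee}_{0}/Q^{\vee}$ is one-dimensional, only two cases occur. If $\tau(S^{\vee}_{0}) \supset M^{\vee}_{0}$, then dually $S_{0} \subset \tau^{-1}(M_{0}) \cap M_{0} = P$, forcing $S_{0} = L_{2, W_{0}}$ by colength. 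Otherwise $\tau(S^{\vee}_{0}) \cap M^{\vee}_{0} = Q^{\vee}$, forcing $T_{0} = Q^{\vee}$; then invoking column $2$, the constraint $S_{0} \subset \tau(T^{\vee}_{0}) = \tau(Q)$ combined with the identification $M_{0} \cap \tau(Q) = P$ again gives $S_{0} = L_{2, W_{0}}$.

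The main obstacle is the lattice calculation $M_{0} \cap \tau(Q) = P$ in the second subcase of step two, which uses essentially the open stratum condition $x \in \calM^{\circ}_{\{2\}}$ rather than just $x \in \calM_{\{2\}}$: the $\tau$-stability of $P$ together with the failure $\tau(Q) \neq Q$ (which characterizes $\calM^{\circ}_{\{2\}} = \calM_{\{2\}} \setminus \calM_{\{0\}}$) rigidifies the configuration of $M_{0}$, $\tau(M_{0})$, and $\tau^{2}(M_{0})$ so that $M_{0} \cap (\tau(M_{0}) + \tau^{2}(M_{0})) = P$ holds in generic position, and this is exactly what excludes any component of the fiber beyond the $\PP^{1}$ produced in step one.
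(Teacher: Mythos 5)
Your route is genuinely different from the paper's, and where it is complete it is correct. The paper parametrizes the fiber by $T_{0}$ and, through a three-case analysis, constructs the unique $S_{0}$ compatible with each $T_{0}$, without ever recording that this forced $S_{0}$ is independent of $T_{0}$; you instead identify $S_{0}$ outright with the canonical $\tau$-stable lattice $P=M_{0}\cap\tau(M_{0})=L_{2,W_{0}}$ and check that every $T_{0}$ with $M_{0}\subset^{1}T_{0}\subset M^{\vee}_{0}$ then fits in the web. Your verification of the web conditions for $(L_{2,W_{0}},T_{0})$, using $\tau(L_{2,W_{0}})=L_{2,W_{0}}$ and $pL^{\vee}_{2,W_{0}}=L_{2,W_{0}}$, is correct, as are the preliminary identifications $L_{2,W_{0}}=P$ and $pM^{\vee}_{0}\subset^{1}L_{2,W_{0}}$, and Case A of your surjectivity argument is fine. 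Two small implicit points should be made explicit: the colength-one statements you use ($P\subset^{1}M_{0}$, and $M^{\vee}_{0}\cap\tau(M^{\vee}_{0})\subset^{1}M^{\vee}_{0}$) need Lemma \ref{spin} (and its analogue on the dual side, which the paper's proof also invokes) together with $\tau(M_{0})\neq M_{0}$, valid on $\calM^{\circ}_{\{2\}}$.

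The genuine gap is exactly the step you flag: in Case B you need $M_{0}\cap\tau(Q)=P$ with $Q=M_{0}+\tau(M_{0})$, and ``holds in generic position'' is not a proof; it is also the wrong slogan, since you need this identity for \emph{every} point of $\calM^{\circ}_{\{2\}}$, not a generic one. Fortunately it follows in one line from the same mechanism the paper uses repeatedly. If $M_{0}\subset\tau(Q)=\tau(M_{0})+\tau^{2}(M_{0})$, then $Q\subset\tau(Q)$; both lattices contain $\tau(M_{0})$ with index one (the index of $\tau(M_{0})$ in $\tau(Q)$ equals that of $M_{0}$ in $Q$, which is one by Lemma \ref{spin} and $\tau(M_{0})\neq M_{0}$), so $Q=\tau(Q)$, contradicting Lemma \ref{BT-description} for $x\in\calM^{\circ}_{\{2\}}=\calM_{\{2\}}-\calM_{\{0\}}$. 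Hence $M_{0}\cap\tau(Q)$ is a proper sublattice of $M_{0}$ containing $P$, so it equals $P$ by colength, and $S_{0}=L_{2,W_{0}}$ follows. With this line inserted your argument is a complete proof of the lemma, and in my view a cleaner one: it makes explicit the fact, hidden in the paper's case-by-case construction and in its later description of $\mathcal{Y}_{L_{2}}$, that $S_{0}$ is constant equal to $L_{2,W_{0}}$ along the whole fiber, while $T_{0}$ moves freely in $\PP(M^{\vee}_{0}/M_{0})\cong\PP^{1}$.
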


\begin{proof}
Again, we need to find all possible $S_{0}$ and $T_{0}$ such that they fit in the following web when $M_{0}$ is fixed

\begin{equation}
\begin{array}[c]{ccccccccc}
\tau(M_{0})&{\subset^{1}}&\tau(T^{\vee}_{0})&{\subset^{1}}&\tau(M^{\vee}_{0})&{\subset^{1}}&\tau(S^{\vee}_{0})&{\subset^{1}}&\frac{1}{p}\tau(M_{0})\\
\rotatebox{90}{$\subset$}&&\rotatebox{90}{$\subset$}&&\rotatebox{90}{$\subset$}&&\rotatebox{90}{$\subset$} &&\rotatebox{90}{$\subset$}\\
pM^{\vee}_{0}&{\subset^{1}}&S_{0}&{\subset^{1}} &M_{0} &{\subset^{1}} &T_{0} &{\subset^{1}}& M^{\vee}_{0}\\
\rotatebox{90}{$\subset$}&&\rotatebox{90}{$\subset$}&&\rotatebox{90}{$\subset$}&&\rotatebox{90}{$\subset$} &&\rotatebox{90}{$\subset$}\\
p\tau(M_{0})&{\subset^{1}}&p\tau(T^{\vee}_{0})&{\subset^{1}}&p\tau(M^{\vee}_{0})&{\subset^{1}}&p\tau(S^{\vee}_{0})&{\subset^{1}}&\tau(M_{0}).
\end{array} 
\end{equation}

We first note that as $M_{0}\in  \calM^{\circ}_{\{2\}}(\FF)$, it follows that $M_{0}\cap\tau(M_{0})$ is $\tau$-invariant but $M_{0}+ \tau(M_{0})$ is not $\tau$-invariant by Lemma \ref{BT-description}. Therefore $M^{\vee}_{0}+\tau(M^{\vee}_{0})$ is also $\tau$-invariant but $M^{\vee}_{0}\cap\tau(M^{\vee}_{0})$ is not $\tau$-invariant.  Given $y\in \pi^{-1}(x)$, it can be represented by a pair $(S_{0}, T_{0})$.  We define a map from $\pi^{-1}(x)\rightarrow \PP^{1}(M^{\vee}_{0}/M_{0})(\FF)$ by sending $(S_{0}, T_{0})$ to $T_{0}/M_{0}\subset M^{\vee}_{0}/M_{0}$. 

Conversely, let $\bar{T}_{0}$ be a point on $\PP^{1}(M^{\vee}_{0}/M_{0})(\FF)$ and let $T_{0}$ be the preimage of $\bar{T}_{0}$ under the natural reduction map $M^{\vee}_{0}\rightarrow M^{\vee}_{0}/M_{0}$. We will define a $S_{0}$ that is uniquely determined by $T_{0}$.  Here we need to distinguish three cases:
\begin{enumerate}
\item The case $\tau(M_{0})\not\subset T_{0}$ and $M_{0}\not\subset \tau(T^{\vee}_{0})$, 
\item The case $\tau(M_{0})\subset T_{0}$, 
\item The case $M_{0}\subset \tau(T^{\vee}_{0})$.
 \end{enumerate}

\emph{The case when $\tau(M_{0})\not\subset T_{0}$ and $M_{0}\not\subset \tau(T^{\vee}_{0})$}: We define $S^{\prime}_{0}=\tau( T^{\vee}_{0})\cap M_{0}$. Then we first verify that $pM^{\vee}_{0}\subset^{1} S^{\prime}_{0}\subset^{1} M_{0}$. Indeed, the inclusions are clear and we need to prove the index is $1$. Suppose that $S^{\prime}_{0}=pM^{\vee}_{0}$, then we  derive a contradiction. Consider the inclusion $$M_{0}\cap\tau(M_{0})\subset S^{\prime}_{0}=pM^{\vee}_{0}\subset^{2} M_{0}.$$
It follows that $\dim_{\FF}M_{0}/M_{0}\cap \tau(M_{0})\geq 2$. This is impossible by Lemma \ref{spin}.  Next we exclude the possibility $S^{\prime}_{0}=M_{0}$. Suppose otherwise $S^{\prime}_{0}=\tau( T^{\vee}_{0})\cap M_{0}=M_{0}$, then $M_{0}\subset \tau(T^{\vee}_{0})$ which is a contradiction to the assumption. Moreover we see $p\tau(T^{\vee}_{0})\subset S^{\prime}_{0} \subset  \tau(T^{\vee}_{0})$ without any difficulty. 
Next we define $p\tau(S^{\vee}_{0})=T_{0}\cap \tau(M_{0})$. We verify that $p\tau(M^{\vee}_{0})\subset^{1}p\tau(S^{\vee}_{0})\subset^{1}\tau(M_{0})$. Again the inclusions are obvious and we need to exclude the cases $p\tau(M^{\vee}_{0})=p\tau(S^{\vee}_{0})$ and $p\tau(S^{\vee}_{0})=\tau(M_{0})$. Suppose that $p\tau(M^{\vee}_{0})=p\tau(S^{\vee}_{0})$, then we have $M_{0}\cap \tau(M_{0})\subset T\cap \tau(M_{0})=p\tau(S^{\vee}_{0})=p\tau(M^{\vee}_{0})\subset^{2}M_{0}$ which is a contradiction to $\dim_{\FF}(M_{0}+\tau(M_{0}))/M_{0}\leq 1$. Now we assume that $p\tau(S^{\vee}_{0})=\tau(M_{0})$. This implies that $p\tau(S^{\vee}_{0})=T_{0}\cap \tau(M_{0})=\tau(M_{0})$ which in turn implies that $T_{0}\supset \tau(M_{0})$. This is a contradiction to the assumption. We are left to verify that $S_{0}=S^{\prime}_{0}$. To do this, we prove that $p\tau(S^{\prime\vee}_{0})=p\tau^{2}(T_{0})+p\tau(M^{\vee}_{0})\subset p\tau(S^{\vee}_{0})=T_{0}\cap \tau(M_{0})$. It is clear that $p\tau(M^{\vee}_{0})\subset T_{0}\cap \tau(M_{0})$ and we need to verify that $p\tau^{2}(T_{0})\subset T_{0}\cap \tau(M_{0})$. This follows from the following inclusions
$$p\tau^{2}(T_{0})\subset p\tau^{2}(M^{\vee}_{0})\subset p\tau(M^{\vee}_{0})+p\tau^{2}(M^{\vee}_{0})=pM^{\vee}_{0}+p\tau(M^{\vee}_{0})\subset T_{0}\cap \tau(M_{0}).$$
Since we have both $p\tau(M^{\vee}_{0})\subset^{1}p\tau(S^{\vee}_{0})\subset^{1}\tau(M_{0})$ and $p\tau(M^{\vee}_{0})\subset^{1}p\tau(S^{\prime\vee}_{0})\subset^{1}\tau(M_{0})$, we conclude that $S_{0}=S^{\prime}_{0}$. 

\emph{The case when $\tau(M_{0})\subset T_{0}$}: Since $M_{0}\neq \tau(M_{0})$, $T_{0}=M_{0}+\tau(M_{0})$ by an index consideration. 
In this case, we define $S_{0}=pM^{\vee}_{0}+p\tau(T^{\vee}_{0})$. We first verify that $pM^{\vee}_{0}\subset^{1}S_{0}\subset^{1} M^{\vee}_{0}$. Again the inclusions are clear and we only need to exclude that cases: $S_{0}=pM^{\vee}_{0}$ and $S_{0}=M^{\vee}_{0}$. Suppose that $S_{0}=pM^{\vee}_{0}$, then $pM^{\vee}_{0}+p\tau(T^{\vee}_{0})=pM^{\vee}_{0}$. This implies that $p\tau(T^{\vee}_{0})\subset pM^{\vee}_{0}$ which is equivalent to $M_{0}\subset \tau(T_{0})$. By assumption, this is equivalent to $M_{0}\subset \tau(M_{0}+\tau(M_{0}))$. But this implies that $M_{0}+\tau(M_{0})=\tau(M_{0}+\tau(M_{0})$ which is a contradiction to the assumption that $M_{0}+\tau(M_{0})$ is not $\tau$-stable. Suppose next that $S_{0}=M_{0}$, then $pM^{\vee}_{0}+p\tau(T^{\vee}_{0})=M_{0}$. It follows from this that $$pM^{\vee}_{0}+p\tau(M^{\vee}_{0})\supset pM^{\vee}_{0}+p\tau(T^{\vee}_{0})=M_{0}\supset^{2} pM^{\vee}_{0}.$$ As we have seen, this is a contradiction to $\dim_{\FF}M^{\vee}_{0}+\tau(M^{\vee}_{0})/M^{\vee}_{0}\leq 1$. We still need to verify that $p\tau(T^{\vee}_{0})\subset S_{0}\subset \tau(T^{\vee}_{0})$ and $p\tau(S^{\vee}_{0})\subset T_{0}\subset \tau(S^{\vee}_{0})$. The first one is clear. For the second one,  we have $p\tau(S^{\vee}_{0})=\tau(M_{0}\cap \tau(T_{0}))\subset M_{0}+\tau(M_{0})=T_{0}$. Since $T_{0}\subset \frac{1}{p}\tau(M_{0})$ and $T_{0}\subset \frac{1}{p}\tau^{2}(T_{0})$ by definition of $\tau$, we have $T_{0}\subset  \frac{1}{p}\tau(M_{0})\cap\frac{1}{p}\tau^{2}(T_{0})=\tau(S^{\vee}_{0})$.

\emph{The case when $M_{0}\subset \tau(T^{\vee}_{0})$}: Since $\tau(M_{0})\neq M_{0}$, we have $\tau(T^{\vee}_{0})=M_{0}+\tau(M_{0})$ by an index consideration. In this case, we let $\tau(S^{\vee}_{0})=\tau(M^{\vee}_{0})+T_{0}$. We first prove that $$\tau(M^{\vee}_{0})\subset^{1} \tau(S^{\vee}_{0})\subset^{1} \frac{1}{p}\tau(M_{0}).$$
Again the inclusions are clear and we verify the index is as indicated above. Suppose otherwise that $\tau(S^{\vee}_{0})=\tau(M^{\vee}_{0})+T_{0}=\tau(M^{\vee}_{0})$. This is equivalent to $T_{0}\subset \tau(M^{\vee}_{0})$ and is further equivalent to $\tau(T^{\vee}_{0})=M_{0}+\tau(M_{0})\supset \tau^{2}(M_{0})$. This implies that $\tau(M_{0}+\tau(M_{0}))=M_{0}+\tau(M_{0})$ which is a contradiction to our assumption. Next we assume that $\tau(S^{\vee}_{0})=\frac{1}{p}\tau(M_{0})$. This implies that $$p\tau(M^{\vee}_{0})\subset^{2} \tau(M_{0})=p\tau(S^{\vee}_{0})=pT_{0}+p\tau(M^{\vee}_{0})\subset pM^{\vee}_{0}+ p\tau(M^{\vee}_{0}).$$
We have seen many times this is impossible. The inclusion $p\tau(S^{\vee}_{0})\subset T_{0}\subset \tau(S^{\vee}_{0})$ is clear and we are left to show that $p\tau(T^{\vee}_{0})\subset S_{0}\subset \tau(T^{\vee}_{0})$. Notice that $\tau(S_{0})=T^{\vee}_{0}\cap\tau( M_{0})$ and hence that $\tau(S_{0})=\tau(M_{0})\cap T^{\vee}_{0}\subset \tau^{2}(T^{\vee}_{0})=\tau(M_{0})+\tau^{2}(M_{0})$. Therefore $S_{0}\subset \tau(T^{\vee}_{0})$. On the other hand, $p\tau^{2}(T^{\vee}_{0})\subset p\tau(M_{0})$ is clear and $p\tau^{2}(T^{\vee}_{0})\subset T^{\vee}_{0}$ by definition. Therefore $p\tau^{2}(T^{\vee}_{0})\subset  p\tau(M_{0})\cap T^{\vee}_{0}=\tau(S_{0})$ and we have $p\tau(T^{\vee}_{0})\subset S_{0}\subset \tau(T^{\vee}_{0})$.
\end{proof}

Now we describe the global structure of $\mathcal{Y}_{\{2\}}$. Let $L_{2}$ be a vertex lattice of type $2$. We will describe the scheme $\mathcal{Y}_{L_{2}}=\pi^{-1}_{\{1\}}(\calM^{\circ}_{L_{2}})$. 

\begin{theorem}\label{YL2}
The scheme $\mathcal{Y}_{L_{2}}$ is irreducible and is a $\PP^{1}$-bundle over $\calM^{\circ}_{L_{2}}$. In particular its dimension is $3$. Its closure $\overline{\mathcal{Y}}_{L_{2}}$ is a $\PP^{1}$-bundle over $\calM_{L_{2}}$ and is an irreducible component of $\calM_{\Iw}$.\end{theorem}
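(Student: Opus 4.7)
The plan is to mirror the strategy of Theorem \ref{YL1}: construct an explicit Dieudonn\'e-theoretic morphism from $\mathcal{Y}_{L_{2}}$ to a $\PP^{1}$-bundle over $\calM^{\circ}_{L_{2}}$, prove it is a bijection on points using Lemma \ref{fiber2}, and invoke Zariski's main theorem to upgrade this to an isomorphism. Concretely, for a point $y=(X_{\Lambda},\iota_{\Lambda},\rho_{\Lambda},\lambda_{\Lambda})\in\mathcal{Y}_{L_{2}}(R)$ with image $x=\pi_{\{1\}}(y)\in\calM^{\circ}_{L_{2}}(R)$, the isogeny $X_{\Lambda_{0}}\rightarrow X_{\Lambda_{1}}\rightarrow X_{\Pi^{-1}\Lambda_{0}}$ produces, after taking the $0$-th component of the covariant Dieudonn\'e crystal, an inclusion $\DD_{\Lambda_{1},0}/\DD_{\Lambda_{0},0}\subset\DD_{\Pi^{-1}\Lambda_{0},0}/\DD_{\Lambda_{0},0}$, the target being a locally free $\calO_{R}$-module of rank $2$ by the Kottwitz condition in Definition \ref{RZ-functor}(4). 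This defines a morphism $\alpha_{L_{2}}\colon\mathcal{Y}_{L_{2}}\to\mathcal{P}(L_{2})$ to the projective bundle $\mathcal{P}(L_{2})=\PP(\DD_{\Pi^{-1}\Lambda_{0},0}/\DD_{\Lambda_{0},0})$ over $\calM^{\circ}_{L_{2}}$.

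For the bijection on $\FF$-points, Lemma \ref{fiber2} shows that over each $x\in\calM^{\circ}_{L_{2}}(\FF)$ the fiber $\pi_{\{1\}}^{-1}(x)$ is parametrized by $\PP(M_{0}^{\vee}/M_{0})(\FF)$ via the assignment $(S_{0},T_{0})\mapsto T_{0}/M_{0}$, with $S_{0}$ uniquely reconstructed by the case analysis in that lemma. This is precisely the fiber of $\alpha_{L_{2}}$. Remark \ref{windremark}, via window theory for displays, promotes this to a bijection over arbitrary field extensions. The source $\mathcal{Y}_{L_{2}}$ is proper over $\calM^{\circ}_{L_{2}}$ since $\pi_{\{1\}}$ is proper (the Rapoport-Zink datum is $\calO_{D}$-linear and the chain $\calL_{\emptyset}\to\calL_{\{1\}}$ is a finite refinement), and the target $\mathcal{P}(L_{2})$ is smooth, hence normal, of relative dimension $1$ over the smooth surface $\calM^{\circ}_{L_{2}}$. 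Consequently $\alpha_{L_{2}}$ is a proper, quasi-finite, birational morphism onto a normal integral target, and Zariski's main theorem forces it to be an isomorphism. This yields irreducibility of $\mathcal{Y}_{L_{2}}$ and the $\PP^{1}$-bundle assertion, and dimension $3$ follows from $\dim\calM^{\circ}_{L_{2}}=2$.

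To pass to the closure $\overline{\mathcal{Y}}_{L_{2}}$ inside $\calM_{\Iw}$, I extend the projective bundle $\mathcal{P}(L_{2})$ to the whole of $\calM_{L_{2}}$: the rank-$2$ quotient $\DD_{\Pi^{-1}\Lambda_{0},0}/\DD_{\Lambda_{0},0}$ is defined over the entire closed stratum $\calM_{L_{2}}$ (by the structure results for $\calM_{L_{2}}$ from \cite{Wang-2}, which give a global rank-$2$ quotient of Dieudonn\'e modules on the irreducible smooth surface $\calM_{L_{2}}$), so $\mathcal{P}(L_{2})$ extends to a $\PP^{1}$-bundle over $\calM_{L_{2}}$. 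Via the modular interpretation, a section of this extended $\PP^{1}$-bundle still defines a lattice chain satisfying the conditions of Definition \ref{RZ-functor}, giving a morphism from the total space back to $\calM_{\Iw}$ which extends $\alpha_{L_{2}}^{-1}$. Its image is closed, contains the dense open $\mathcal{Y}_{L_{2}}$, and is irreducible of dimension $3$, hence equals $\overline{\mathcal{Y}}_{L_{2}}$ and is an irreducible component of $\calM_{\Iw}$.

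The main obstacle is the final extension step: one must verify that a section of the extended $\PP^{1}$-bundle over the boundary strata $\calM^{\circ}_{L_{2},\{0\}}$ and $\calM_{L_{2},\{1\}}$ still determines a unique admissible $S_{0}$, i.e.\ that the three-case reconstruction in the proof of Lemma \ref{fiber2} specializes correctly when $M_{0}$ acquires extra $\tau$-invariance. Equivalently, one must check that only one of the two $\PP^{1}$'s appearing in the fibers over $\calM_{\{02\}}^{\circ}$ and $\calM_{\{1\}}$ (cf.\ the statements of Theorem \ref{thm2}) lies in $\overline{\mathcal{Y}}_{L_{2}}$, namely the one selected by the quotient $T_{0}/M_{0}\subset M_{0}^{\vee}/M_{0}$. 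This is where the Bruhat-Tits geometry of $\calM_{L_{2}}$ enters substantively, and I expect this verification to be the main technical content, the rest being parallel to Theorem \ref{YL1}.
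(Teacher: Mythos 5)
Your proposal follows essentially the same route as the paper: the same map $y\mapsto(\pi_{\{1\}}(y),\,\DD_{\Lambda_{1},0}/\DD_{\Lambda_{0},0}\subset\DD_{\Pi^{-1}\Lambda_{0},0}/\DD_{\Lambda_{0},0})$ to the $\PP^{1}$-bundle over $\calM^{\circ}_{L_{2}}$, bijectivity on points over arbitrary field extensions via Lemma \ref{fiber2} and Remark \ref{windremark}, and Zariski's main theorem applied to a proper morphism with normal target. The extension to the closure that you flag as the main remaining obstacle is simply asserted to be clear in the paper's proof, and the irreducible-component claim is handled there by the later intersection analysis (Corollary \ref{main-cor}), so your argument is not genuinely different.
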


\begin{proof}
Let $R$ be a $\FF$-algebra, then a point $y\in \mathcal{Y}_{L_{2}}(R)$ is represented by a quadruple $y=(X_{\Lambda}, \iota_{\Lambda}, \rho_{\Lambda}, \lambda_{\Lambda})$ that fits in the following two isogenies
\begin{equation}
\begin{split}
& \mathbb{X}_{L^{+}_{2}}\rightarrow X_{\Lambda_{0}} \rightarrow  \mathbb{X}_{L^{-}_{2}}\\
& X_{\Lambda_{0}}\rightarrow X_{\Lambda_{1}}\rightarrow X_{\Pi^{-1}\Lambda_{0}}\\
\end{split}
\end{equation}

Consider the $\PP(\DD_{\Pi^{-1}\Lambda_{0}, 0}/\DD_{\Lambda_{0},0})$-bundle over $\calM^{\circ}_{L_{2}}$ which we denote it by $\mathcal{Y}^{\prime}_{L_{2}}$. Therefore we can define a map $\alpha_{\{2\}}: \mathcal{Y}_{L_{2}}\rightarrow  \mathcal{Y}^{\prime}_{L_{2}}$ by sending $y=(X_{\Lambda}, \iota_{\Lambda}, \rho_{\Lambda}, \lambda_{\Lambda})$ to 
\begin{equation*}
(\pi_{\{1\}}(y), \DD_{\Lambda_{1},0}/\DD_{\Lambda_{0},0}\subset\DD_{\Pi^{-1}\Lambda_{0},0}/\DD_{\Lambda_{0},0}).
\end{equation*}

By the proof of Lemma \ref{fiber2} and Remark \ref{windremark}, we see $\mathcal{Y}_{ L_{2}}(k)=\mathcal{Y}^{\prime}_{L_{2}}(k)$ for any field extension $k$ over $\FF$. 
The morphism $\alpha_{\{2\}}$ is a proper morphism as both  $\pi_{\{1\}}: \mathcal{Y}_{L_{2}}\rightarrow \calM^{\circ}_{L_{2}}$ and the natural map $\mathcal{Y}^{\prime}_{L_{2}}\rightarrow \calM^{\circ}_{L_{2}}$ are proper.  Since $\mathcal{Y}^{\prime}_{L_{2}}$ is also normal, one can apply Zariski's main theorem to conclude that $\alpha_{\{2\}}$ is an isomorphism. It is clear that the closure $\overline{\mathcal{Y}}_{L_{2}}$ is a $\PP^{1}$-bundle over $\calM_{L_{2}}$. 
\end{proof}

\subsection{The description of $\mathcal{Y}_{\{0\}}$} Let $x\in \calM^{\circ}_{\{0\}}(\FF)$ ba a $\FF$-point. We would like to first understand the set  $\pi^{-1}_{\{1\}}(x)$. This case is symmetric to the case of $\mathcal{Y}_{\{2\}}$. Therefore we will only point out the difference between these two cases and leave the details to the reader. 

\begin{lemma}\label{fiber0}
There is a bijection between $\pi_{\{1\}}^{-1}(x)$ and $\PP^{1}(\FF)$.
\end{lemma}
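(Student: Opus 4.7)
My strategy is to dualize the proof of Lemma \ref{fiber2}, exchanging the roles of sums and intersections. By Lemma \ref{BT-description}, the hypothesis $x = M_{0} \in \calM^{\circ}_{\{0\}}(\FF)$ means that $M_{0} + \tau(M_{0})$ is $\tau$-stable while $M_{0} \cap \tau(M_{0})$ is not; dually, $M_{0}^{\vee} \cap \tau(M_{0}^{\vee})$ is $\tau$-stable while $M_{0}^{\vee} + \tau(M_{0}^{\vee})$ is not. This is precisely the mirror of the situation analyzed in Lemma \ref{fiber2}, and it suggests that the free parameter in the fiber should now be $S_{0}$ rather than $T_{0}$.

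First I would define the map $\pi_{\{1\}}^{-1}(x) \to \PP(M_{0}/pM_{0}^{\vee})(\FF) \cong \PP^{1}(\FF)$ sending a representative pair $(S_{0}, T_{0})$ to the line $\bar{S}_{0} := S_{0}/pM_{0}^{\vee}$, which is well defined because $pM_{0}^{\vee} \subset^{1} S_{0} \subset^{1} M_{0}$ in the web \eqref{web}. To invert, I would start from $S_{0}$ and construct a unique $T_{0}$ by a case analysis that mirrors the trichotomy in the proof of Lemma \ref{fiber2}.

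In the generic case where $S_{0} \not\subset \tau(M_{0})$ and $S_{0} \not\subset \tau^{-1}(M_{0})$, I would set $T_{0} := \tau(S_{0}^{\vee}) \cap M_{0}^{\vee}$, which is the dual of the formula $S_{0} = \tau(T_{0}^{\vee}) \cap M_{0}$ used in Lemma \ref{fiber2}. In the case $S_{0} \subset \tau(M_{0})$, an index comparison forces $S_{0} = M_{0} \cap \tau(M_{0})$, and $T_{0}$ is then given by an explicit formula parallel to the one in case 2 of Lemma \ref{fiber2}. In the remaining case $S_{0} \subset \tau^{-1}(M_{0})$, equivalently $\tau(S_{0}^{\vee}) \supset M_{0}^{\vee}$, an index comparison forces $S_{0} = M_{0} \cap \tau^{-1}(M_{0})$, and $T_{0}$ is given by yet another formula parallel to case 3 of Lemma \ref{fiber2}.

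The main obstacle is the case-by-case verification that the constructed $T_{0}$ lies strictly between $M_{0}$ and $M_{0}^{\vee}$ with $M_{0} \subset^{1} T_{0} \subset^{1} M_{0}^{\vee}$, and that all horizontal and vertical inclusions of the web hold. In particular one must rule out the degenerate possibilities $T_{0} = M_{0}$ and $T_{0} = M_{0}^{\vee}$; this is the technical heart of the argument, and it relies on Lemma \ref{spin} to control the one-dimensional defect $(M_{0} + \tau(M_{0}))/M_{0}$, together with the $\tau$-stability of $M_{0} + \tau(M_{0})$ and the non-stability of $M_{0} \cap \tau(M_{0})$, exactly paralleling the long case analysis in Lemma \ref{fiber2}. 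Once this is settled, the two maps are mutually inverse by construction, completing the bijection.
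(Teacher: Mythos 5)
Your proposal is correct and follows essentially the same route as the paper: you use the same projection $(S_{0},T_{0})\mapsto \bar{S}_{0}\in\PP(M_{0}/pM_{0}^{\vee})(\FF)$, and your trichotomy is equivalent to the paper's (on lattices with $pM_{0}^{\vee}\subset^{1}S_{0}\subset^{1}M_{0}$ your conditions $S_{0}\subset\tau(M_{0})$ and $S_{0}\subset\tau^{-1}(M_{0})$ single out exactly the same special lines as the paper's conditions $S_{0}\supset p\tau(M_{0}^{\vee})$ and $\tau(S_{0}^{\vee})\supset M_{0}^{\vee}$, since $p(M_{0}^{\vee}+\tau(M_{0}^{\vee}))=M_{0}\cap\tau(M_{0})$ here). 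Your generic formula $T_{0}=\tau(S_{0}^{\vee})\cap M_{0}^{\vee}$ is the correct dualization of the construction in Lemma \ref{fiber2}, and the verifications you defer are exactly those the paper itself leaves to symmetry and to the reader.
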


\begin{proof}
Let $M_{0}\in \calM_{\{0\}}$, then it follows from Lemma \ref{BT-description} that $M_{0}+\tau(M_{0})$ is $\tau$-invariant but $M_{0}\cap \tau(M_{0})$ is not $\tau$-invariant. This is equivalent to $M^{\vee}_{0}\cap \tau(M^{\vee}_{0})$ is $\tau$-invariant but $M^{\vee}_{0}+\tau(M^{\vee}_{0})$ is not $\tau$-invariant. Given a point $y\in\pi^{-1}(x)$, we represent it by $(S_{0}, T_{0})$ which fits in the diagram \eqref{web}. Then we define a map $\pi^{-1}(x)\rightarrow \PP^{1}(M_{0}/pM^{\vee})(\FF)$ by sending $(S_{0}, T_{0})$ to $S_{0}/pM^{\vee}_{0}\subset M_{0}/pM^{\vee}_{0}$. We are naturally divided into three cases: the case $S_{0}\not\supset p\tau(M^{\vee}_{0})$ and $\tau(S^{\vee}_{0})\not\supset M^{\vee}_{0}$, the case $S_{0}\supset p\tau(M^{\vee}_{0})$ and the case $\tau(S^{\vee}_{0})\supset M^{\vee}_{0}$. 

\emph{The case $S_{0}\not\supset p\tau(M^{\vee}_{0})$ and $\tau(S^{\vee}_{0})\not\supset M^{\vee}_{0}$:} This is completely symmetric to the proof in the corresponding case in Lemma \ref{fiber2}. We skip all the details.

\emph{The case $S_{0}\supset p\tau(M^{\vee}_{0})$:}  We point out how the condition that $M_{0}\cap \tau(M_{0})$ is not $\tau$-invariant comes into the proof. In this case we will define $T_{0}=M_{0}+p\tau(S^{\vee}_{0})$.  Then we need to check that $M_{0}\subset^{1} T_{0}\subset^{1} M^{\vee}_{0}$. Suppose that $M_{0}=T_{0}=M_{0}+p\tau(S^{\vee}_{0})$ then $M_{0}\supset p\tau(S^{\vee}_{0})$. Note that $S_{0}=pM^{\vee}_{0}+p\tau(M^{\vee}_{0})$ in this case by an index consideration. It follows that $M_{0}\supset p\tau(S^{\vee}_{0})=\tau(M_{0})\cap\tau^{2}(M_{0})$. 
This implies that $M_{0}\cap\tau(M_{0})\supset \tau(M_{0})\cap\tau^{2}(M_{0})$ which in turn implies that $M\cap\tau(M_{0})$ is $\tau$-invariant which is contradiction. We leave the rest of the proof for this Lemma to the reader.

\end{proof}

Now we describe the global structure of $\mathcal{Y}_{\{0\}}$. Let $L_{0}$ be a vertex lattice of type $0$. We will describe the scheme $\mathcal{Y}_{L_{0}}=\pi^{-1}_{\{1\}}(\calM^{\circ}_{L_{0}})$. 

\begin{theorem}\label{YL0}
The scheme $\mathcal{Y}_{L_{0}}$ is irreducible and is a $\PP^{1}$-bundle over $\calM^{\circ}_{L_{0}}$. In particular its dimension is $3$. Its closure $\overline{\mathcal{Y}}_{L_{0}}$ is a $\PP^{1}$-bundle over $\calM_{L_{0}}$ and is an irreducible component of $\calM_{\Iw}$.
\end{theorem}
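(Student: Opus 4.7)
The plan is to mimic the proof of Theorem \ref{YL2}, exchanging the roles of $T_{0}$ and $S_{0}$ in keeping with the symmetry between $\calM^{\circ}_{\{0\}}$ and $\calM^{\circ}_{\{2\}}$ observed in Lemma \ref{fiber0}. More precisely, over $\calM^{\circ}_{L_{0}}$ I introduce the auxiliary projective bundle $\mathcal{Y}^{\prime}_{L_{0}}:=\PP(\DD_{\Lambda_{0},0}/\DD_{\Pi\Lambda_{0},0})$. For a point $y=(X_{\Lambda}, \iota_{\Lambda}, \rho_{\Lambda}, \lambda_{\Lambda})\in\mathcal{Y}_{L_{0}}(R)$, the chain of isogenies $X_{\Pi\Lambda_{0}}\to X_{\Lambda_{-1}}\to X_{\Lambda_{0}}$ induces on the $0$-components of the {\Dieu} crystals an inclusion $\DD_{\Lambda_{-1},0}/\DD_{\Pi\Lambda_{0},0}\subset \DD_{\Lambda_{0},0}/\DD_{\Pi\Lambda_{0},0}$, which is a line in the two-dimensional quotient. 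I therefore define
\begin{equation*}
\alpha_{\{0\}}: \mathcal{Y}_{L_{0}}\longrightarrow \mathcal{Y}^{\prime}_{L_{0}}, \qquad y\mapsto \bigl(\pi_{\{1\}}(y),\ \DD_{\Lambda_{-1},0}/\DD_{\Pi\Lambda_{0},0}\subset \DD_{\Lambda_{0},0}/\DD_{\Pi\Lambda_{0},0}\bigr).
\end{equation*}

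Next I verify that $\alpha_{\{0\}}$ is a bijection on $k$-points for every field extension $k/\FF$: on geometric $\FF$-points this is exactly the content of Lemma \ref{fiber0}, since under the lattice description of $\pi_{\{1\}}^{-1}(x)$ the assignment $(S_{0},T_{0})\mapsto S_{0}/pM_{0}^{\vee}\subset M_{0}/pM_{0}^{\vee}$ used there corresponds, via the dictionary between {\Dieu} modules and $p$-divisible groups, to the quotient $\DD_{\Lambda_{-1},0}/\DD_{\Pi\Lambda_{0},0}\subset \DD_{\Lambda_{0},0}/\DD_{\Pi\Lambda_{0},0}$. The extension to arbitrary $k$ is guaranteed by the window-theoretic description recalled in Remark \ref{windremark}.

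With the bijectivity on field points in hand, the remainder of the argument is formal: both $\pi_{\{1\}}:\mathcal{Y}_{L_{0}}\to\calM^{\circ}_{L_{0}}$ and $\mathcal{Y}^{\prime}_{L_{0}}\to\calM^{\circ}_{L_{0}}$ are proper (the former as a base change of the proper morphism $\pi_{\{1\}}:\calM_{\Iw}\to\calM$, the latter as a projective bundle), so $\alpha_{\{0\}}$ itself is proper; moreover $\mathcal{Y}^{\prime}_{L_{0}}$ is normal, being a projective bundle over the smooth surface $\calM^{\circ}_{L_{0}}$. Zariski's main theorem then upgrades the set-theoretic bijection to an isomorphism, giving that $\mathcal{Y}_{L_{0}}$ is a $\PP^{1}$-bundle over $\calM^{\circ}_{L_{0}}$, hence irreducible of dimension $3$. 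Passing to closures and using that $\calM_{L_{0}}$ is the closure of $\calM^{\circ}_{L_{0}}$ inside $\calM$ (together with the properness of $\pi_{\{1\}}$, which forces $\overline{\mathcal{Y}}_{L_{0}}=\pi_{\{1\}}^{-1}(\calM_{L_{0}})$), one concludes that $\overline{\mathcal{Y}}_{L_{0}}$ is a $\PP^{1}$-bundle over $\calM_{L_{0}}$ and therefore an irreducible component of $\calM_{\Iw}$.

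The only mildly delicate point in this plan is the compatibility check needed to ensure that $\alpha_{\{0\}}$ recovers exactly the bijection established in Lemma \ref{fiber0}; once one sees that the reduction mod $p\tau(M_{0}^{\vee})$ picture in the lattice proof corresponds literally to taking the $0$-component of the {\Dieu} crystal of $X_{\Lambda_{-1}}$ inside that of $X_{\Lambda_{0}}$, everything else is parallel to the proof of Theorem \ref{YL2} and requires no further work.
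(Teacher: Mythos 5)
Your proposal is correct and is essentially the paper's own argument: the paper proves Theorem \ref{YL0} simply by invoking the proof of Theorem \ref{YL2} ``with simple modifications,'' and you have spelled out exactly those modifications (using the $S_{0}$-side bundle $\PP(\DD_{\Lambda_{0},0}/\DD_{\Pi\Lambda_{0},0})$, Lemma \ref{fiber0} with Remark \ref{windremark} for pointwise bijectivity, then properness, normality and Zariski's main theorem). The only blemish is the final aside, where the reduction in the lattice proof is modulo $pM_{0}^{\vee}$ (the $0$-component of the {\Dieu} module of $X_{\Pi\Lambda_{0}}$), not modulo $p\tau(M_{0}^{\vee})$; this does not affect the argument.
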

\begin{proof}
This follows from the same proof as in Theorem \ref{YL2} with simple modifications.
\end{proof}

\subsection{The description of $\mathcal{Y}_{\{02\}}$} Let $x\in \calM^{\circ}_{\{02\}}(\FF)$ be a $\FF$-point. We now describe the set  $\pi_{\{1\}}^{-1}(x)$. We define $E$ to be the transversal intersection of two copies of $\PP^{1}$ at a single point. 
\begin{lemma}\label{fiber02}
There is a bijection between $\pi^{-1}_{\{1\}}(x)$ and $E(\FF)$.
\end{lemma}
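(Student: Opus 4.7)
The plan is to adapt the method of Lemmas \ref{fiber2} and \ref{fiber0} by mapping $\pi_{\{1\}}^{-1}(x)$ into $\PP(M_0/pM_0^\vee)\times\PP(M_0^\vee/M_0)\cong\PP^1\times\PP^1$ via $\phi\colon(S_0,T_0)\mapsto(S_0/pM_0^\vee,\,T_0/M_0)$, and identify the image with $E(\FF)$. Set $A:=M_0+\tau(M_0)$ and $B:=M_0\cap\tau(M_0)$; by Lemma \ref{BT-description} both are $\tau$-stable when $M_0\in\calM^\circ_{\{02\}}(\FF)$.

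The preliminary step is to establish two self-duality identities. First, $A=A^\vee$: using $M_0\subset M_0^\vee$, $M_0\subset\tau(M_0^\vee)$ (from Corollary in \S 2.3), and $\sigma$-semilinearity of $\tau$, one obtains $(A,A)_0\subset W_0$, i.e.\ $A\subset A^\vee$; since $A/M_0$ and $A^\vee/M_0$ are both $1$-dimensional inside the $2$-dimensional $M_0^\vee/M_0$, these subspaces coincide. Second, $B=pB^\vee$: dualizing $p\tau(M_0^\vee)\subset M_0$ gives $pM_0^\vee\subset\tau(M_0)$, so combined with $pM_0^\vee\subset M_0$ and $p\tau(M_0^\vee)\subset M_0$ we obtain $pM_0^\vee,\,p\tau(M_0^\vee)\subset B$, hence $pB^\vee=pM_0^\vee+p\tau(M_0^\vee)\subset B$, with equality by an index count. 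Accordingly set $\bar T^{\rm sp}:=A/M_0$ and $\bar S^{\rm sp}:=B/pM_0^\vee$.

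Next I verify that both coordinate lines through $(\bar S^{\rm sp},\bar T^{\rm sp})$ lie in the image. When $T_0=A$, one has $\tau(T_0^\vee)=A$, so the compatibilities $p\tau(T_0^\vee)\subset S_0\subset\tau(T_0^\vee)$ and $p\tau(S_0^\vee)\subset T_0\subset\tau(S_0^\vee)$ both reduce to $pA\subset S_0\subset A$, which is automatic given $pM_0^\vee\subset S_0\subset M_0$, $pA\subset pM_0^\vee$ (from $A\supset M_0$), and $M_0\subset A$. So every $S_0$ pairs with $T_0=A$. Dually, $S_0=B$ is $\tau$-stable, and the compatibilities reduce to $pB^\vee\subset T_0\subset B^\vee$; this is automatic from $M_0\subset T_0\subset M_0^\vee$ via $pB^\vee\subset M_0$ and $M_0\subset B^\vee$. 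Hence every $T_0$ pairs with $S_0=B$, and together these two free families produce the two lines that make up $E(\FF)$.

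Lastly I rule out any image point off these two lines. Suppose $T_0\neq A$; then $\tau(M_0)\not\subset T_0$, and the alternate subcase of Lemma \ref{fiber2} ($M_0\subset\tau(T_0^\vee)$) would force $\tau(T_0^\vee)=A$, hence $T_0=A^\vee=A$, which is excluded. So we are in the analog of case (1) of Lemma \ref{fiber2}, and $S_0=\tau(T_0^\vee)\cap M_0$. Because $B\subset\tau(T_0^\vee)\cap M_0\subset M_0$ with $\dim_\FF M_0/B=1$, the intersection is $B$ or $M_0$; the latter is excluded by $M_0\not\subset\tau(T_0^\vee)$, so $S_0=B$. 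Symmetrically $S_0\neq B$ forces $T_0=A$. Thus the image of $\phi$ equals $E(\FF)$, and $\phi$ is evidently bijective onto its image, since on each coordinate line the free coordinate uniquely determines the pair. The main obstacle in this plan is pinning down the two identifications $A=A^\vee$ and $B=pB^\vee$: they are precisely what collapses the generic family, where Lemma \ref{fiber2}'s recipe would normally yield a smooth $\PP^1$ of pairs, onto the single point $\bar S^{\rm sp}$, thereby forcing the fiber to acquire the nodal shape of $E$ rather than remaining a smooth $\PP^1$.
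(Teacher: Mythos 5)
Your proof is correct and takes essentially the same route as the paper: identify the fiber with pairs $(S_0,T_0)$ completing the lattice web, map to $\PP^1(M_0/pM_0^\vee)\times\PP^1(M_0^\vee/M_0)$, and show the image is exactly the two coordinate lines through $\bigl(B/pM_0^\vee,\,A/M_0\bigr)$ with $A=M_0+\tau(M_0)$, $B=M_0\cap\tau(M_0)$, i.e.\ $E(\FF)$. Your explicit identities $A=A^\vee$ and $B=pB^\vee$ in fact make precise two points the paper leaves implicit (that the uniquely determined $S_0$ equals $M_0\cap\tau(M_0)$ whenever $T_0\neq M_0+\tau(M_0)$, and the second column compatibility when $T_0=M_0+\tau(M_0)$); the only blemish is the citation of $M_0\subset B^\vee$ where the needed (and equally immediate) inclusion is $M_0^\vee\subset B^\vee$.
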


\begin{proof}
For the most part, the proof is the same as the proof for Lemma \ref{fiber2}. First we note that as $M_{0}\in  \calM^{\circ}_{\{02\}}(\FF)$, it follows that both $M_{0}\cap\tau(M_{0})$ and  $M_{0}+ \tau(M_{0})$ are $\tau$-invariant by Lemma \ref{BT-description}. Therefore $M^{\vee}_{0}+\tau(M^{\vee}_{0})$ and $M^{\vee}_{0}\cap\tau(M^{\vee}_{0})$ are also $\tau$-invariant. However, $M_{0}$ is not $\tau$-invariant. Let $y\in \pi^{-1}(x)$, then it can be represented by $(S_{0}, T_{0})$ that fits in the diagram \eqref{web}. We define a map $\theta: \pi^{-1}(x)\rightarrow \PP^{1}(M_{0}/pM^{\vee}_{0})(\FF)\times \PP^{1}(M^{\vee}_{0}/M_{0})(\FF)$ by sending $y=(S_{0}, T_{0})$ to $\bar{S}_{0}=S_{0}/pM^{\vee}_{0}\subset M_{0}/pM^{\vee}_{0}$ and $\bar{T}_{0}=T_{0}/M_{0}\subset M^{\vee}_{0}/M_{0}$. Let $\PP_{1}$ and $\PP_{2}$ be the two projective lines that are contained in $E$ and let $P$ be their unique intersection point. We embed $E(\FF)$ in $\PP^{1}(M_{0}/pM^{\vee}_{0})(\FF)\times \PP^{1}(M^{\vee}_{0}/M_{0})(\FF)$ in such a way that $\PP_{1}(\FF)$ is identified with $\PP^{1}(M_{0}/pM^{\vee}_{0})(\FF)$ with $P$ identified with the point $M_{0}\cap\tau(M_{0})/pM^{\vee}_{0}\subset M_{0}/pM^{\vee}_{0}$ and $\PP_{2}(\FF)$ identified with $\PP^{1}(M_{0}/pM^{\vee}_{0})(\FF)$ with $P$ identified with the point $M_{0}+\tau(M_{0})/M_{0}\subset M^{\vee}_{0}/M_{0}$. First we prove that the image of $\mathcal{Y}_{\{02\}}\rightarrow \PP^{1}(M_{0}/pM^{\vee}_{0})(\FF)\times \PP^{1}(M^{\vee}_{0}/M_{0})(\FF)$ lands in $E(\FF)$. Indeed consider the three cases studied in Lemma \ref{fiber2}:
\begin{enumerate} 
\item The case $\tau(M_{0})\not\subset T_{0}$ and $M_{0}\not\subset \tau(T^{\vee}_{0})$;
\item The case $\tau(M_{0})\subset T_{0}$;
\item The case $M_{0}\subset \tau(T^{\vee}_{0})$. 
\end{enumerate}
In the case when $\tau(M_{0})\not\subset T_{0}$ and $M_{0}\not\subset \tau(T^{\vee}_{0})$,  we have seen that $S_{0}$ is uniquely determined by $T_{0}$. Therefore the map $\theta$ when restricted to those $(S_{0}, T_{0})$ in this case lands in $L_{2}$ away from the point defined by $M_{0}+\tau(M_{0})$. Next we consider the case when either  $\tau(M_{0})\subset T_{0}$ or $M_{0}\subset \tau(T^{\vee}_{0})$. In fact these two cases are equivalent. Indeed suppose $\tau(M_{0})\subset T_{0}$, then $T_{0}=M_{0}+\tau(M_{0})$.  Therefore $\tau(T^{\vee}_{0})=T^{\vee}_{0}=M^{\vee}_{0}\cap \tau(M^{\vee}_{0})$ which also contains $M_{0}$. Note that in this case we have $\tau(T^{\vee}_{0})=M_{0}+\tau(M_{0})$ as well. One can show if $M_{0}\subset \tau(T^{\vee}_{0})$, then $\tau(M_{0})\subset T_{0}$ in a similar way. Thus when $T_{0}=M_{0}+\tau(M_{0})$, sending $S_{0}$ to $\bar{S}_{0}=S_{0}/pM^{\vee}_{0}$ gives a point in $\PP^{1}(M_{0}/pM^{\vee}_{0})$. We see that the image of $\theta$ is indeed in $E(\FF)$. 

Conversely, let $Q=(\bar{S}_{0}, \bar{T}_{0})\in E(\FF)$ be any point. Here $E(\FF)$ is identified as a subset of $\pi^{-1}(x)\rightarrow \PP^{1}(M_{0}/pM^{\vee}_{0})(\FF)\times \PP^{1}(M^{\vee}_{0}/M_{0})(\FF)$. We write $(S_{0}, T_{0})$ the preimage of $(\bar{S}_{0}, \bar{T}_{0})$ under the natural reduction map. Suppose first that $Q\in \PP_{2}-P$, then we have seen that for each $T_{0}$ we can find a unique $S_{0}$ such that $(S_{0}, T_{0})\in \pi^{-1}(x)$. Then we can assume that $T_{0}=M_{0}+\tau(M_{0})$ and hence $\tau(T^{\vee}_{0})=M_{0}+\tau(M_{0})$. Let $\bar{S}_{0}\in \PP^{1}(M_{0}/pM^{\vee}_{0})(\FF)$ be any point. Then we have $pM^{\vee}_{0}\subset S_{0}\subset M_{0}$. It follows immediately that $S_{0}\subset M_{0}\subset M_{0}+\tau(M_{0})=\tau(T^{\vee}_{0})$ and $p\tau(T^{\vee}_{0})=pM_{0}+p\tau(M_{0})\subset pM^{\vee}_{0}\subset S_{0}$. Therefore $(S_{0}, T_{0})$ gives a point in $\pi^{-1}(x)$ in this case. This finishes the proof as in this case $\bar{S}_{0}$ can be chosen to be any point on the line $\PP^{1}(M_{0}/pM^{\vee}_{0})(\FF)$. 
\end{proof}

\begin{theorem}\label{YL02}
Let $\mathcal{Y}_{L_{0}, L_{2}}=\pi^{-1}_{\{1\}}(\calM^{\circ}_{L_{0}, L_{2}})$ and suppose that it is non-empty. 
Then $\mathcal{Y}_{L_{0}, L_{2}}$ has two irreducible components $\mathcal{Y}^{T_{0}}_{L_{0},L_{2}}$ and $\mathcal{Y}^{S_{0}}_{L_{0}, L_{2}}$. Their intersection is a projective line with its $\FF_{p}$-points removed. The closure of $\mathcal{Y}^{T_{0}}_{L_{0},L_{2}}$ is a ruled surface. The same is true for $\mathcal{Y}^{S_{0}}_{L_{0},L_{2}}$. 
\end{theorem}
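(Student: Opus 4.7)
The plan is to carve $\mathcal{Y}_{L_{0}, L_{2}}$ into two $\PP^{1}$-bundles over $\calM^{\circ}_{L_{0}, L_{2}}$ which globalize the two lines $\PP_{1}$ and $\PP_{2}$ appearing in the fiberwise description of Lemma \ref{fiber02}. The key observation is that over each $x \in \calM^{\circ}_{L_{0}, L_{2}}$ the intersection point $P \in \PP_{1} \cap \PP_{2}$ is singled out by the pair of $\tau$-stable canonical lattices
\[
S_{0}^{\mathrm{can}} := M_{0} \cap \tau(M_{0}), \qquad T_{0}^{\mathrm{can}} := M_{0} + \tau(M_{0}),
\]
both of which vary algebraically in families because these are precisely the lattices used to parameterize the stratum $\calM^{\circ}_{\{02\}}$ in \cite{Wang-2}. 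I would therefore introduce
\[
\mathcal{Y}^{T_{0}}_{L_{0}, L_{2}} := \{ y \in \mathcal{Y}_{L_{0}, L_{2}} : S_{\bullet}(y) = S_{0}^{\mathrm{can}}\}, \quad \mathcal{Y}^{S_{0}}_{L_{0}, L_{2}} := \{ y \in \mathcal{Y}_{L_{0}, L_{2}} : T_{\bullet}(y) = T_{0}^{\mathrm{can}}\},
\]
as closed subschemes cut out by equalities of {\Dieu} submodules.

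Next, I would upgrade each of these to a $\PP^{1}$-bundle. On $\mathcal{Y}^{T_{0}}_{L_{0}, L_{2}}$ the free variable $T_{0}$, constrained by $M_{0} \subset^{1} T_{0} \subset^{1} M_{0}^{\vee}$, is parameterized by a line in the rank-$2$ quotient $\DD_{\Pi^{-1} \Lambda_{0}, 0} / \DD_{\Lambda_{0}, 0}$, giving a natural map
\[
\alpha^{T_{0}} : \mathcal{Y}^{T_{0}}_{L_{0}, L_{2}} \longrightarrow \PP\bigl(\DD_{\Pi^{-1} \Lambda_{0}, 0} / \DD_{\Lambda_{0}, 0}\bigr)
\]
over $\calM^{\circ}_{L_{0}, L_{2}}$. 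Lemma \ref{fiber02} combined with the window-theoretic upgrade of Remark \ref{windremark} shows that $\alpha^{T_{0}}$ is bijective on geometric points of every field; since it is proper between integral schemes and the target is normal, Zariski's main theorem forces it to be an isomorphism, exactly as in the proofs of Theorems \ref{YL0} and \ref{YL2}. The symmetric argument identifies $\mathcal{Y}^{S_{0}}_{L_{0}, L_{2}}$ with the $\PP^{1}$-bundle $\PP(\DD_{\Lambda_{0}, 0}/\DD_{\Pi \Lambda_{0}, 0})$. Both components are then irreducible of dimension two, and together they cover $\mathcal{Y}_{L_{0}, L_{2}}$ by the pointwise decomposition $E = \PP_{1} \cup \PP_{2}$.

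For the intersection, imposing both $S_{\bullet} = S_{0}^{\mathrm{can}}$ and $T_{\bullet} = T_{0}^{\mathrm{can}}$ simultaneously gives a canonical section of $\pi_{\{1\}}$ over $\calM^{\circ}_{L_{0}, L_{2}}$. Since $\calM_{L_{0}, L_{2}} \cong \PP^{1}$ and the superspecial locus removed from it consists exactly of its $\FF_{p}$-rational points (the lines $M_{0}$ in $L_{0, W_{0}}/L_{2, W_{0}}$ coming from honest $\ZZ_{p}$-sublattices $L_{1}$), this intersection is a projective line with its $\FF_{p}$-points removed. For the closure statement, inside $\calM_{\Iw}$ the closure $\overline{\mathcal{Y}}^{T_{0}}_{L_{0}, L_{2}}$ fibers over $\calM_{L_{0}, L_{2}} \cong \PP^{1}$ with fiber $\PP^{1}$ over the open stratum, and with the limiting fiber over each superspecial boundary point supplied by one of the $\PP^{1}$-factors of the $\PP^{1} \times \PP^{1}$-fiber in Theorem \ref{YL1}. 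Such a flat family of $\PP^{1}$'s over $\PP^{1}$ is a Hirzebruch surface, hence ruled, and the same applies to $\mathcal{Y}^{S_{0}}_{L_{0}, L_{2}}$.

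The hard step will be the scheme-theoretic upgrade: namely, verifying that the conditions $S_{\bullet} = S_{0}^{\mathrm{can}}$ and $T_{\bullet} = T_{0}^{\mathrm{can}}$ cut out honest reduced closed subschemes, and that the candidate bundle maps $\alpha^{T_{0}}$, $\alpha^{S_{0}}$ are morphisms of schemes rather than merely pointwise identifications. As in the previous Bruhat–Tits-type strata treated in \cite{Wang-2}, this should reduce to exhibiting $S_{0}^{\mathrm{can}}$ and $T_{0}^{\mathrm{can}}$ as sub-{\Dieu} modules in families and then invoking Zariski's main theorem once the normality of the $\PP^{1}$-bundle targets is in hand.
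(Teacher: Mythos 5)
Your proposal is correct and follows essentially the same route as the paper: the paper also defines $\mathcal{Y}^{S_{0}}_{L_{0},L_{2}}$ and $\mathcal{Y}^{T_{0}}_{L_{0},L_{2}}$ by the closed conditions $\DD_{\Lambda_{1},0}=\DD_{\Lambda_{0},0}+\Pi^{-1}F\DD_{\Lambda_{0},0}$ and $\DD_{\Lambda_{-1},0}=\DD_{\Lambda_{0},0}\cap\Pi^{-1}F\DD_{\Lambda_{0},0}$ (your canonical $T_{0}^{\mathrm{can}}$, $S_{0}^{\mathrm{can}}$ in crystal form, which settles the scheme-theoretic step you flagged), identifies each with a $\PP^{1}$-bundle over $\calM^{\circ}_{L_{0},L_{2}}$ via the same proper-plus-bijective-plus-normal Zariski's main theorem argument, and deduces the covering from Lemma \ref{fiber02}. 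Your explicit treatment of the intersection as the canonical section over $\calM^{\circ}_{L_{0},L_{2}}\cong\PP^{1}\setminus\PP^{1}(\FF_{p})$ and of the closures as ruled surfaces simply fills in what the paper leaves as ``the rest of the claims are clear.''
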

\begin{proof}
Let $R$ be a $\FF$-algebra, then an $R$-point $y\in \mathcal{Y}_{L_{0}, L_{2}}$ is represented by a tuple 
\begin{equation}
y=\underline{X}_{\Lambda}=(X_{\Lambda}, \iota_{\Lambda}, \rho_{\Lambda}, \lambda_{\Lambda})
\end{equation}
that fits in the following chain of isogenies
\begin{equation}
\begin{split}
&\mathbb{X}_{L^{+}_{2}} \rightarrow X_{\Lambda_{0}} \rightarrow \mathbb{X}_{L^{+}_{0}}\\
& X_{\Pi\Lambda_{0}}\rightarrow X_{\Lambda_{-1}} \rightarrow X_{\Lambda_{0}}\\ 
\end{split}
\end{equation}
and 
\begin{equation}
\begin{split}
& \mathbb{X}_{L^{+}_{2}} \rightarrow X_{\Lambda_{0}} \rightarrow \mathbb{X}_{L^{+}_{0}}\\
& X_{\Lambda_{0}} \rightarrow X_{\Lambda_{1}}\rightarrow X_{\Pi^{-1}\Lambda_{0}}\\ 
\end{split}
\end{equation}

Consider the $\PP(\DD_{\Lambda_{0}, 0}/\DD_{\Pi\Lambda_{0},0})\times \PP(\DD_{\Pi^{-1}\Lambda_{0},0}/\DD_{\Lambda_{0},0})$-bundle over $\calM^{\circ}_{L_{0}, L_{2}}$ which we denote it by $\mathcal{Y}^{\prime}_{L_{0}, L_{2}}$. Therefore we can define a map $\alpha_{\{02\}}: \mathcal{Y}_{L_{0}, L_{2}}\rightarrow  \mathcal{Y}^{\prime}_{L_{0}, L_{2}}$ by sending $y=(X_{\Lambda}, \iota_{\Lambda}, \rho_{\Lambda}, \lambda_{\Lambda})$ to 
\begin{equation*}
\begin{split}
&(\pi_{\{1\}}(y), \DD_{\Lambda_{-1},0}/\DD_{\Pi\Lambda_{0},0}\subset\DD_{\Lambda_{0},0}/\DD_{\Pi\Lambda_{0}, 0},\\ &\DD_{\Lambda_{1},0}/\DD_{\Lambda_{0},0}\subset\DD_{\Pi^{-1}\Lambda_{0},0}/\DD_{\Lambda_{0},0}).\\
\end{split}
\end{equation*}

Define the closed subschemes of $\mathcal{Y}_{L_{0}, L_{2}}$ by
\begin{equation}\label{YST}
\begin{split}
&\mathcal{Y}^{S_{0}}_{L_{0}, L_{2}}=\{y=\underline{X}_{\Lambda}\in \mathcal{Y}_{L_{0},L_{2}}:  \DD_{\Lambda_{1},0}=\DD_{\Lambda_{0},0}+\Pi^{-1}F\DD_{\Lambda_{0},0}\}\\
&\mathcal{Y}^{T_{0}}_{L_{0}, L_{2}}=\{y=\underline{X}_{\Lambda}\in \mathcal{Y}_{L_{0},L_{2}}:  \DD_{\Lambda_{-1},0}=\DD_{\Lambda_{0},0}\cap\Pi^{-1}F\DD_{\Lambda_{0},0}\}.\\
\end{split}
\end{equation}

Notice that 
\begin{equation}
\begin{split}
&\mathcal{Y}^{S_{0}}_{L_{0}, L_{2}}(\FF)=\{(M_{0}, S_{0}, T_{0})\in \mathcal{Y}_{L_{0}, L_{2}}(\FF): T_{0}=M_{0}+\tau(M_{0}) \}\\ 
&\mathcal{Y}^{T_{0}}_{L_{0}, L_{2}}(\FF)=\{(M_{0}, S_{0}, T_{0})\in \mathcal{Y}_{L_{0}, L_{2}}(\FF): S_{0}=M_{0}\cap\tau(M_{0}) \}.\\
\end{split}
\end{equation}
By the proof of Lemma \ref{fiber02}, we see $\mathcal{Y}_{L_{0}, L_{2}}(k)=\mathcal{Y}^{S_{0}}_{L_{0}, L_{2}}(k)\cup \mathcal{Y}^{T_{0}}_{L_{0}, L_{2}}(k)$ for any field extension $k$ of $\FF$. Therefore $\mathcal{Y}_{L_{0}, L_{2}}=\mathcal{Y}^{S_{0}}_{L_{0}, L_{2}}\cup \mathcal{Y}^{T_{0}}_{L_{0}, L_{2}}$.
Let $\mathcal{Y}^{\prime S_{0}}_{L_{0}, L_{2}}$ be the $\PP(\DD_{\Lambda_{0}, 0}/\DD_{\Pi\Lambda_{0},0})$-bundle  over $\calM^{\circ}_{L_{0}, L_{2}}$ in $\mathcal{Y}^{\prime}_{L_{0}, L_{2}}$ defined by those points
\begin{equation*}
\begin{split}
&(\pi_{\{1\}}(y), \DD_{\Lambda_{-1},0}/\DD_{\Pi\Lambda_{0},0}\subset\DD_{\Lambda_{0},0}/\DD_{\Pi\Lambda_{0}, 0},\\
&(\DD_{\Lambda_{0},0}+\Pi^{-1}F\DD_{\Lambda_{0},0})/\DD_{\Lambda_{0},0}\subset\DD_{\Pi^{-1}\Lambda_{0},0}/\DD_{\Lambda_{0},0}).\\
\end{split}
\end{equation*}
We see that the restriction of $\alpha_{\{02\}}$ to $\mathcal{Y}^{S_{0}}_{L_{0}, L_{2}}$ defines a morphism 
\begin{equation*}
\mathcal{Y}^{S_{0}}_{L_{0}, L_{2}}\rightarrow  \mathcal{Y}^{\prime S_{0}}_{L_{0}, L_{2}}.
\end{equation*}
This morphism is easily seen to be a proper morphism and $\mathcal{Y}^{\prime S_{0}}_{L_{0}, L_{2}}$ is normal. Moreover by the proof of Lemma \ref{fiber02} this map is bijective on points. Thus by Zariski's main theorem this map is an isomorphism. Let $\mathcal{Y}^{\prime T_{0}}_{L_{0}, L_{2}}$ be the $\PP(\DD_{\Pi^{-1}\Lambda_{0}, 0}/\DD_{\Lambda_{0},0})$-bundle  over $\calM^{\circ}_{L_{0}, L_{2}}$ in $\mathcal{Y}^{\prime}_{L_{0}, L_{2}}$ defined by those points
\begin{equation*}
\begin{split}
&(\pi_{\{1\}}(y), (\DD_{\Lambda_{0},0}\cap \Pi^{-1}F \DD_{\Lambda_{0},0}) /\DD_{\Pi\Lambda_{0},0}\subset\DD_{\Lambda_{0},0}/\DD_{\Pi\Lambda_{0}, 0},\\ 
&\DD_{\Lambda_{1},0}/\DD_{\Lambda_{0},0}\subset\DD_{\Pi^{-1}\Lambda_{0},0}/\DD_{\Lambda_{0},0}).\\
\end{split}
\end{equation*}
We see that the restriction of $\alpha_{\{02\}}$ to $\mathcal{Y}^{T_{0}}_{L_{0}, L_{2}}$ defines a morphism 
\begin{equation*}
\mathcal{Y}^{T_{0}}_{L_{0}, L_{2}}\rightarrow  \mathcal{Y}^{\prime T_{0}}_{L_{0}, L_{2}} 
\end{equation*}
which is an isomorphism by the same argument as before. In particular, $\mathcal{Y}^{T_{0}}_{L_{0}, L_{2}}$ and $\mathcal{Y}^{S_{0}}_{L_{0}, L_{2}}$ are irreducible. The rest of the claims are clear.

\end{proof}

\subsection{Summary and an application}We summarize the results obtained so far in the following theorems and conclude our study of the scheme $\calM_{\Iw}$. As a consequence, we apply the results here to describe the scheme $\calM_{P}$ via the correspondence in \eqref{corres}.

\begin{theorem}\label{main-thm}
The scheme $\calM_{\Iw}$ admits a decomposition of the form
$$\calM_{\Iw}=\mathcal{Y}_{\{0\}}\sqcup \mathcal{Y}_{\{2\}}\sqcup \mathcal{Y}_{\{02\}}\sqcup \mathcal{Y}_{\{1\}}.$$
\begin{enumerate}
\item The scheme $\mathcal{Y}_{\{0\}}$ is of dimension $3$ and an irreducible component of it is of the form $\mathcal{Y}_{L_{0}}$. The scheme $\mathcal{Y}_{L_{0}}$ is a $\PP^{1}$-bundle over $\calM^{\circ}_{L_{0}}$ and therefore its closure $\overline{\mathcal{Y}}_{L_{0}}$ is  a $\PP^{1}$-bundle over $\calM_{L_{0}}$.
\item The scheme $\mathcal{Y}_{\{2\}}$ is of dimension $3$ and an irreducible component of it is of the form $\mathcal{Y}_{L_{2}}$. The scheme $\mathcal{Y}_{L_{2}}$ is a $\PP^{1}$-bundle over $\calM^{\circ}_{L_{2}}$ and therefore its closure $\overline{\mathcal{Y}}_{L_{2}}$ is  a $\PP^{1}$-bundle over $\calM_{L_{2}}$.
\item The scheme $\mathcal{Y}_{\{02\}}$ is of dimension $2$ and an irreducible component of it is of the form $\mathcal{Y}^{T_{0}}_{L_{0}, L_{2}}$ or $\mathcal{Y}^{S_{0}}_{L_{0}, L_{2}}$. They are both $\PP^{1}$-bundle over $\calM^{\circ}_{L_{0},L_{2}}$.
\item The scheme $\mathcal{Y}_{\{1\}}$ is of dimension $2$ and an irreducible component of it is of the form $\mathcal{Y}_{L_{1}}$. The scheme $\mathcal{Y}_{L_{1}}$  is a  $\PP^{1}\times \PP^{1}$-bundle over $\calM_{L_{1}}$.
\end{enumerate}
\end{theorem}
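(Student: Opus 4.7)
The plan is to assemble Theorem \ref{main-thm} by pulling back the known Bruhat-Tits stratification of $\calM$ through the projection $\pi_{\{1\}}: \calM_{\Iw} \to \calM$ and then applying the structural results already established for each fiber type. Since the decomposition $\calM = \calM^{\circ}_{\{0\}} \sqcup \calM^{\circ}_{\{2\}} \sqcup \calM^{\circ}_{\{02\}} \sqcup \calM_{\{1\}}$ is a set-theoretic disjoint union of locally closed strata, the preimages $\mathcal{Y}_{\{0\}}, \mathcal{Y}_{\{2\}}, \mathcal{Y}_{\{02\}}, \mathcal{Y}_{\{1\}}$ under $\pi_{\{1\}}$ form a disjoint partition of $\calM_{\Iw}$ by their very definition; this takes care of the first assertion.

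For each piece I would then cite the relevant structural theorem already proved in the preceding subsections. Part (1) is immediate from Theorem \ref{YL0}, which identifies $\mathcal{Y}_{L_0}$ with a $\PP^1$-bundle over the open stratum $\calM^{\circ}_{L_0}$ for each vertex lattice $L_0$ of type $0$; taking the union of such bundles over all such vertex lattices recovers $\mathcal{Y}_{\{0\}}$, and the closure is a $\PP^1$-bundle over $\calM_{L_0}$. Part (2) is the symmetric statement and follows identically from Theorem \ref{YL2}. Part (3) follows from Theorem \ref{YL02}, which exhibits $\mathcal{Y}_{L_0,L_2}$ as the union of the two $\PP^1$-bundles $\mathcal{Y}^{S_0}_{L_0,L_2}$ and $\mathcal{Y}^{T_0}_{L_0,L_2}$ over the one-dimensional stratum $\calM^{\circ}_{L_0,L_2}$. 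Part (4) is Theorem \ref{YL1}, which exhibits $\mathcal{Y}_{\{1\}}$ as a $\PP^1 \times \PP^1$-bundle over the finite set $\calM_{\{1\}}$.

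The dimension counts then follow mechanically from the base-plus-fiber computation: the $\PP^1$-fibers over the surfaces $\calM^{\circ}_{L_0}$ and $\calM^{\circ}_{L_2}$ produce $3$-dimensional components, while the fibers over the curve $\calM^{\circ}_{L_0,L_2}$ and the finite set $\calM_{\{1\}}$ produce only $2$-dimensional components. In short, no new technical input is required here; all the real work has already been carried out. The main obstacle was to pin down the pointwise structure of the fibers in Lemmas \ref{fiber0}, \ref{fiber1}, \ref{fiber2}, \ref{fiber02}, where one has to use the Kottwitz condition together with the self-duality constraint $T_0^{\perp} = S_0$ to force each fiber to take the claimed shape, and to promote this pointwise identification to a scheme-theoretic isomorphism via Zariski's main theorem as in Theorems \ref{YL0}--\ref{YL02}. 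With those results in hand, Theorem \ref{main-thm} is essentially a bookkeeping exercise.
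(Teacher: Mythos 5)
Your proposal matches the paper's own proof: Theorem \ref{main-thm} is indeed proved there simply by citing Theorems \ref{YL0}, \ref{YL2}, \ref{YL02} and \ref{YL1} for parts (1)--(4), with the disjointness of the $\mathcal{Y}$-strata coming directly from the Bruhat-Tits stratification of $\calM$ pulled back along $\pi_{\{1\}}$. Your identification of the real work (the fiber lemmas plus Zariski's main theorem in the preceding subsections) and the routine base-plus-fiber dimension count is exactly how the paper organizes the argument.
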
 

\begin{proof}
Part $(1)$ is proved in Theorems \ref{YL0}, $(2)$ is proved in Theorem \ref{YL2},  $(3)$ is the content of Theorem \ref{YL02} and $(4)$ is proved in \ref{YL1}.
\end{proof}

\begin{corollary}\label{main-cor}
The scheme $\calM_{\Iw}$ has three types of components. They are of the form $\overline{\mathcal{Y}}_{L_{0}}$, $\overline{\mathcal{Y}}_{L_{2}}$ and $\overline{\mathcal{Y}}_{L_{1}}$.
\begin{enumerate}
\item The intersection between $\overline{\mathcal{Y}}_{L_{0}}$ and $\overline{\mathcal{Y}}_{L_{2}}$ is $\mathcal{M}_{L_{0},L_{2}}$ which is isomorphic to $\PP^{1}$.
\item The intersection between $\overline{\mathcal{Y}}_{L_{0}}$ and ${\mathcal{Y}}_{L_{1}}$ is a $\PP^{1}$-bundle over $\calM_{L_{1}}$.
\item The intersection between $\overline{\mathcal{Y}}_{L_{2}}$ and ${\mathcal{Y}}_{L_{1}}$ is a $\PP^{1}$-bundle over $\calM_{L_{1}}$.
\end{enumerate}
\end{corollary}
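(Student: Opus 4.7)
The plan is to leverage the explicit bundle descriptions from Theorems \ref{YL0}, \ref{YL2}, \ref{YL02}, \ref{YL1} and use the map $\pi_{\{1\}}$ to reduce each intersection to a computation over the appropriate Bruhat--Tits stratum of $\calM$. The key observation is that $\overline{\mathcal{Y}}_{L_0}$ is the $\PP(\DD_{\Lambda_0,0}/\DD_{\Pi\Lambda_0,0})$-bundle over $\calM_{L_0}$ (the ``$S_0$-direction''), $\overline{\mathcal{Y}}_{L_2}$ is the $\PP(\DD_{\Pi^{-1}\Lambda_0,0}/\DD_{\Lambda_0,0})$-bundle over $\calM_{L_2}$ (the ``$T_0$-direction''), and $\mathcal{Y}_{L_1}$ is the product of these two over $\calM_{L_1}$. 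Thus each intersection pulls back to an intersection of pre-images under $\pi_{\{1\}}$, and it suffices to analyze what happens fiber by fiber over the intersection of the bases in $\calM$.

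For parts (2) and (3), I would first observe that $\pi_{\{1\}}(\overline{\mathcal{Y}}_{L_0}) \cap \pi_{\{1\}}(\mathcal{Y}_{L_1}) = \calM_{L_0} \cap \calM_{L_1} = \calM_{L_0,\{1\}}$, i.e.\ the superspecial points of $\calM$ sitting in $\calM_{L_0}$. Over such a point $x$, by Lemma \ref{fiber1} the full fiber of $\pi_{\{1\}}$ is $\PP(M_0/pM_0^{\vee}) \times \PP(M_0^{\vee}/M_0)$, while the restriction of $\overline{\mathcal{Y}}_{L_0}$ picks out precisely the first $\PP^1$-factor (the $S_0$-direction), as can be read off from the construction of $\alpha_{\{1\}}$ in Theorem \ref{YL1}. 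Hence $\overline{\mathcal{Y}}_{L_0} \cap \mathcal{Y}_{L_1}$ is a $\PP^1$-bundle over $\calM_{L_1}$, proving (2); part (3) follows by the symmetric argument with $T_0$ replacing $S_0$.

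For part (1), first note that $\overline{\mathcal{Y}}_{L_0} \cap \overline{\mathcal{Y}}_{L_2}$ maps under $\pi_{\{1\}}$ to $\calM_{L_0} \cap \calM_{L_2} = \calM_{L_0,L_2} \cong \PP^1$. Restrict to the open stratum $\calM^\circ_{L_0,L_2} \subset \calM^\circ_{\{02\}}$. By Theorem \ref{YL02} the pre-image under $\pi_{\{1\}}$ is $\mathcal{Y}^{S_0}_{L_0,L_2} \cup \mathcal{Y}^{T_0}_{L_0,L_2}$, with $\mathcal{Y}^{S_0}$ (resp.\ $\mathcal{Y}^{T_0}$) carrying the variable $S_0$ (resp.\ $T_0$) and fixing $T_0 = M_0 + \tau(M_0)$ (resp.\ $S_0 = M_0 \cap \tau(M_0)$). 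Comparing with the $S_0$-bundle (resp.\ $T_0$-bundle) structure of $\overline{\mathcal{Y}}_{L_0}$ (resp.\ $\overline{\mathcal{Y}}_{L_2}$), one identifies the restrictions of $\overline{\mathcal{Y}}_{L_0}$ and $\overline{\mathcal{Y}}_{L_2}$ over $\calM^\circ_{L_0,L_2}$ with $\mathcal{Y}^{S_0}_{L_0,L_2}$ and $\mathcal{Y}^{T_0}_{L_0,L_2}$ respectively. Their intersection is cut out by the pair of conditions $S_0 = M_0\cap\tau(M_0)$ and $T_0 = M_0+\tau(M_0)$, giving a unique point of $\pi_{\{1\}}^{-1}(M_0)$ and hence a section of $\pi_{\{1\}}$ over $\calM^\circ_{L_0,L_2}$. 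Passing to closures on both sides and using that $\overline{\mathcal{Y}}_{L_0}$ and $\overline{\mathcal{Y}}_{L_2}$ are $\PP^1$-bundles over $\calM_{L_0}$ and $\calM_{L_2}$, the boundary superspecial points in $\calM_{L_0,L_2}\cap\calM_{\{1\}}$ contribute exactly the intersection point of the $S_0$-$\PP^1$ and $T_0$-$\PP^1$ inside the corresponding $\PP^1\times\PP^1$ fiber, which again yields a single point. This extends the section over the whole $\PP^1$, proving $\overline{\mathcal{Y}}_{L_0} \cap \overline{\mathcal{Y}}_{L_2} \cong \calM_{L_0,L_2}$.

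The main technical obstacle is matching up the global $\PP^1$-bundle structures of $\overline{\mathcal{Y}}_{L_0}$ and $\overline{\mathcal{Y}}_{L_2}$ with the pieces $\mathcal{Y}^{S_0}_{L_0,L_2}$ and $\mathcal{Y}^{T_0}_{L_0,L_2}$ used in Theorem \ref{YL02}, and verifying that the closure of the diagonal section at superspecial boundary points really lands in the expected transversal intersection point inside $\PP^1\times\PP^1$. Once this identification is made, reducedness of the intersection and the isomorphism with $\calM_{L_0,L_2}$ follow by a properness and normality argument in the style of Theorem \ref{YL0} (applying Zariski's main theorem to the obvious morphism from $\calM_{L_0,L_2}$ to the intersection).
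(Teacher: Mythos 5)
Your proposal is correct and takes essentially the same route as the paper: the paper's own proof just recalls the explicit lattice descriptions of $\mathcal{Y}_{L_{1}}$, $\mathcal{Y}_{L_{0}}$, $\mathcal{Y}_{L_{2}}$ and $\mathcal{Y}_{L_{0},L_{2}}$ as sets of triples $(M_{0},S_{0},T_{0})$ with the appropriate determination rules and checks the intersections on points, which is precisely your fiberwise analysis over the Bruhat--Tits strata of $\calM$. If anything you are more explicit than the paper about the one delicate step (matching the fibers of the closures $\overline{\mathcal{Y}}_{L_{0}}$, $\overline{\mathcal{Y}}_{L_{2}}$ over $\calM^{\circ}_{L_{0},L_{2}}$ and over the superspecial boundary points with the pieces $\mathcal{Y}^{S_{0}}_{L_{0},L_{2}}$, $\mathcal{Y}^{T_{0}}_{L_{0},L_{2}}$), which the paper dispatches with the single sentence that from these descriptions all the claims about the intersections can be proved.
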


\begin{proof}
The claims about the intersection can be checked on points. We recall that we have proved the following
\begin{equation*}
\begin{split}
& \mathcal{Y}_{L_{1}}=\{(M_{0}, S_{0}, T_{0}): M_{0}\in\calM_{L_{1}}, pM^{\vee}_{0}\subset S_{0}\subset M_{0}, M_{0}\subset T_{0}\subset M^{\vee}_{0}\};\\
& \mathcal{Y}_{L_{0}}=\{(M_{0}, S_{0}, T_{0}): M_{0}\in\calM^{\circ}_{L_{0}}, pM^{\vee}_{0}\subset S_{0}\subset M_{0},\text{$T_{0}$ is uniquely determined by $S_{0}$}\};\\
& \mathcal{Y}_{L_{2}}=\{(M_{0}, S_{0}, T_{0}): M_{0}\in\calM^{\circ}_{L_{2}}, M_{0}\subset T_{0}\subset M^{\vee}_{0},\text{$S_{0}$ is uniquely determined by $T_{0}$}\};\\
& \mathcal{Y}_{L_{0},L_{2}}=\{(M_{0}, S_{0}, T_{0}): M_{0}\in\calM^{\circ}_{L_{0}, L_{2}}, M_{0}\subset T_{0}\subset M^{\vee}_{0},pM^{\vee}_{0}\subset S_{0}\subset M_{0},\\ 
&\text{$T_{0}$ determines $S_{0}$ if $T_{0}\neq M_{0}+\tau(M_{0})$},\text{$S_{0}$ determines $T_{0}$ if $S_{0}\neq M_{0}\cap\tau(M_{0})$} \}.\\
\end{split}
\end{equation*}

From this all the claims about intersection can be proved. It follows that $\mathcal{Y}_{L_{1}}$ is not contained in $\overline{\mathcal{Y}}_{L_{0}}$ or $\overline{\mathcal{Y}}_{L_{2}}$. Moreover $\overline{\mathcal{Y}}_{L_{0}}$ and $\overline{\mathcal{Y}}_{L_{2}}$ are clearly irreducible components. Therefore $\mathcal{Y}_{L_{1}}$ is also an irreducible components. 
\end{proof}

Recall that we have the Rapoport-Zink space $\calN_{P}$ corresponding to the lattice chain $\calL_{\{02\}}$ and the integral Rapoport-Zink datum   $\mathcal{D}_{\{02\}}$. We have a natural map $\pi_{\{02\}}: \calN_{\Iw}\rightarrow \calN_{P}$ that corresponds to the natural map $\pi_{\{02\}}:\calL_{\emptyset}\rightarrow \calL_{\{02\}}$. We restrict $\pi_{\{02\}}$ to $\calM_{\Iw}$  and obtain $\pi_{\{02\}}: \calM_{\Iw}\rightarrow \calM_{\{02\}}$.  The following theorem is an immediate consequence of the results we obtained in Theorem \ref{main-thm} and Corollary \ref{main-cor}. 

\begin{theorem}\label{main-app}
The scheme $\calM_{P}$ is smooth and equidimensional of dimension $2$. In fact we have a decomposition
\begin{equation*}
\calM_{P}=\bigsqcup_{x\in \calM_{\{1\}}}(\PP^{1}\times\PP^{1})_{x}.
\end{equation*} 
\end{theorem}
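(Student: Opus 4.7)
The plan is to use the correspondence \eqref{corres} and show that the restriction $\pi_{\{02\}}|_{\mathcal{Y}_{\{1\}}}: \mathcal{Y}_{\{1\}}\to \calM_P$ is an isomorphism. By Theorem \ref{main-thm}(4) we already know that $\mathcal{Y}_{\{1\}}$ is a $\PP^1\times\PP^1$-bundle over the discrete set of superspecial points $\calM_{\{1\}}$, so it breaks up as the disjoint union $\bigsqcup_{x\in\calM_{\{1\}}}(\PP^1\times\PP^1)_x$. Establishing the isomorphism will therefore simultaneously give the claimed decomposition, the smoothness of each component, and the equidimensionality.

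To set this up, I would first record a lattice description of $\calM_P(\FF)$ analogous to Lemma \ref{RZ-set-pre}, by specializing Definition \ref{RZ-functor} to the lattice chain $\calL_{\{02\}}$. Geometric points of $\calM_P$ correspond to pairs $(S_0, T_0)$ of $W_0$-lattices in $N_0$ satisfying the outer inclusions of the web \eqref{web} (i.e., those that do not involve $M_0$) together with the Kottwitz conditions on $S_0$ and $T_0$ separately. Under this description, $\pi_{\{02\}}$ is simply the forgetful map $(M_0, S_0, T_0)\mapsto (S_0, T_0)$.

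The main technical step is to show that for every $(S_0, T_0)\in\calM_P(\FF)$ there is a unique $M_0\in\calM_{\{1\}}$ with $S_0\subset M_0\subset T_0$ and $(M_0, S_0, T_0)\in\mathcal{Y}_{\{1\}}(\FF)$. For existence, I would construct $M_0$ as the smallest $\tau$-stable lattice sandwiched between $S_0$ and $T_0$, e.g.\ via
\[
M_0 \;:=\; \bigl(\,\textstyle\sum_{n\geq 0}\tau^n(S_0)\bigr)\cap T_0,
\]
and verify that it is a vertex lattice of type $1$ (so $pM_0^\vee\subset^2 M_0\subset^2 M_0^\vee$, $\tau(M_0)=M_0$) with $S_0\subset^1 M_0\subset^1 T_0$ and the requisite Kottwitz conditions; this parallels the case-by-case analyses carried out in Lemmas \ref{fiber2}, \ref{fiber0}, and \ref{fiber02}. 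For uniqueness, any two superspecial $M_0, M_0'$ between $S_0$ and $T_0$ satisfy $M_0\cap M_0'\supset S_0 + \tau(S_0)$ and $M_0 + M_0'\subset T_0\cap\tau(T_0)$, which, combined with $\tau$-invariance and the vertex-lattice constraints, forces $M_0 = M_0'$. The hard part will be precisely this existence/uniqueness verification, since one must juggle the Frobenius action and the Kottwitz/duality conditions simultaneously.

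Once the bijection on $\FF$-points is established, the scheme-theoretic isomorphism $\pi_{\{02\}}|_{\mathcal{Y}_{\{1\}}}\colon \mathcal{Y}_{\{1\}} \xrightarrow{\sim} \calM_P$ follows by Zariski's main theorem, exactly as in the proofs of Theorems \ref{YL1}, \ref{YL2}, \ref{YL0}, and \ref{YL02}: the map is proper, the source and target are projective over the isolated base $\calM_{\{1\}}$, and normality of $\calM_P$ on each connected component is provided by the very fact that each such component will turn out to be $\PP^1\times\PP^1$. Combined with Theorem \ref{main-thm}(4), this delivers the desired decomposition $\calM_P=\bigsqcup_{x\in\calM_{\{1\}}}(\PP^1\times\PP^1)_x$ and hence the smoothness and equidimensionality of $\calM_P$.
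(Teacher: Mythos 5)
Your route is genuinely different from the paper's: the paper deduces Theorem \ref{main-app} from Theorem \ref{main-thm} and Corollary \ref{main-cor} by analysing the images under $\pi_{\{02\}}$ of the three types of irreducible components of $\calM_{\Iw}$ (the image of each $\mathcal{Y}_{L_{1}}$ is the explicit $\PP^{1}\times\PP^{1}$, and the images of $\overline{\mathcal{Y}}_{L_{0}}$, $\overline{\mathcal{Y}}_{L_{2}}$ are argued to be contained in these), and it never states or uses uniqueness of a superspecial lattice sandwiched between $S_{0}$ and $T_{0}$. The key problem with your plan is precisely that uniqueness step: it fails exactly in the configuration your inequalities cannot detect, namely when $S_{0}$ and $T_{0}$ are themselves $\tau$-stable, so that $S_{0}+\tau(S_{0})=S_{0}$ and $T_{0}\cap\tau(T_{0})=T_{0}$ and your two containments carry no information. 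Concretely, choose a symplectic basis $e_{1},e_{2},f_{1},f_{2}$ of $C$ and set $M_{0}=\langle e_{1},e_{2},f_{1},pf_{2}\rangle_{W_{0}}$, $M_{0}'=\langle e_{1},e_{2},pf_{1},f_{2}\rangle_{W_{0}}$ (two distinct superspecial points), $S_{0}=M_{0}\cap M_{0}'$, $T_{0}=M_{0}+M_{0}'$. Then $T_{0}=T_{0}^{\vee}$, $S_{0}=pS_{0}^{\vee}$, and one checks directly that $pM_{0}^{\vee}\subset^{1}S_{0}\subset^{1}M_{0}\subset^{1}T_{0}\subset^{1}M_{0}^{\vee}$ and likewise for $M_{0}'$; since every lattice in sight is $\tau$-stable, all conditions of the web \eqref{web} hold, so $(M_{0},S_{0},T_{0})$ and $(M_{0}',S_{0},T_{0})$ (indeed one such triple for each of the $p+1$ $\tau$-fixed lines in $T_{0}/S_{0}$) are points of $\mathcal{Y}_{\{1\}}(\FF)$ with the same image in $\calM_{P}(\FF)$. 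So $\pi_{\{02\}}|_{\mathcal{Y}_{\{1\}}}$ is not injective on $\FF$-points at such points, and the asserted isomorphism $\mathcal{Y}_{\{1\}}\xrightarrow{\sim}\calM_{P}$ cannot be obtained along these lines; at best you get a morphism that identifies finitely many points on distinct pieces $(\PP^{1}\times\PP^{1})_{x}$, $(\PP^{1}\times\PP^{1})_{x'}$.

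Two further gaps. First, your candidate $M_{0}=\bigl(\sum_{n\geq 0}\tau^{n}(S_{0})\bigr)\cap T_{0}$ need not be $\tau$-stable, because $T_{0}$ is not $\tau$-stable; to have any chance you must saturate over all $n\in\ZZ$ (or intersect all $\tau$-translates of $T_{0}$), and even then the real work is to verify $S_{0}\subset^{1}M_{0}\subset^{1}T_{0}$, $pM_{0}^{\vee}\subset S_{0}$ and $T_{0}\subset M_{0}^{\vee}$, a case analysis of the same order as Lemmas \ref{fiber2}, \ref{fiber0} and \ref{fiber02}; flagging it as ``the hard part'' leaves the entire content of the surjectivity statement unproved. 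Second, the appeal to Zariski's main theorem is circular: in Theorems \ref{YL1}--\ref{YL02} the target of the morphism is an explicitly constructed bundle, known to be normal beforehand, whereas your target is $\calM_{P}$ itself, and justifying its normality ``because each component will turn out to be $\PP^{1}\times\PP^{1}$'' assumes the conclusion. The fix consistent with the paper's method is to map out of the known spaces (as the paper does, by computing the images of the components of $\calM_{\Iw}$ and their intersections from Corollary \ref{main-cor}) rather than to try to prove that the forgetful map onto $\calM_{P}$ is an isomorphism from $\mathcal{Y}_{\{1\}}$.
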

\begin{proof}
By Theorem \ref{main-thm}, we have the following decomposition of $\calM_{\Iw}$
\begin{equation}
\calM_{\Iw}=(\cup_{L_{0}}\bar{\mathcal{Y}}_{L_{0}})\cup(\cup_{L_{2}}\bar{\mathcal{Y}}_{L_{2}})\cup(\cup_{L_{1}}{\mathcal{Y}}_{L_{1}}).
\end{equation}
For each vertex lattice $L$, the map $\pi_{\{02\}}$ will contract each $\calM_{L}$ to a point. Note that for $L=L_{0}$ a vertex lattice of type $0$ or $L=L_{2}$ a vertex lattice of type $2$, $\bar{\mathcal{Y}}_{L}$ is a $\PP^{1}$-bundle over $\calM_{L}$ hence its image under $\pi_{\{02\}}$ is a collection of $\PP^{1}$. Since the scheme $\calM_{\{1\}}=\bigcup_{L_{1}}\calM_{L_{1}}$ is discrete set of points,  $\bar{\mathcal{Y}}_{L_{1}}$ maps to $$\bigsqcup_{x\in \calM_{\{1\}}}(\PP^{1}\times\PP^{1})_{x}$$ under $\pi_{\{02\}}$. Since $\bar{\mathcal{Y}}_{L}\cap \bar{\mathcal{Y}}_{L_{1}}$ is a $\PP^{1}$-bundle over $\calM_{L_{1}}$ for any vertex lattice $L$ of type $0$ or type $2$, the image of  ${\mathcal{Y}}_{L}$ under $\pi_{\{02\}}$ is a $\PP^{1}$ contained in the image of ${\mathcal{Y}}_{L_{1}}$.
\end{proof}

\begin{remark}
The proof of this theorem is similar to that of \cite[Theorem 4.7]{Yu06}. The supersingular locus of the usual Siegel threefold with Iwahori level is described in \cite[Theorem 8.1]{Yu08}. The reader is invited to compare our results with his.
\end{remark}

\section{Affine Deligne-Lusztig varieties}

\subsection{Affine Deligne Lusztig variety} In this section we switch to a purely group theoretic setting and compare the previous results obtained by studying the Rapoport-Zink space in terms of lattices with the results of \cite{GHN16} obtained for those fully Hodge-Newton decomposable affine Deligne-Lusztig varieties. From here on we will abbreviate affine Deligne-Lusztig varieties as ADLV. 

Let $F$ be a finite extension of $\QQ_{p}$ and $\breve{F}$ be the completion of the maximal unramified extension of $F$. Let $G$ be a connected reductive group over $F$ which we assume to be  and we write $\breve{G}$ its base change to $\breve{F}$. Then $\breve{G}$ is quasi split and we choose a maximal split torus $S$ and denote by $T$ its centralizer.  We know $T$ is a maximal torus and we denote by $N$ its normalizer. The relative Weyl group is defined to be $W=N(\breve{F})/ T(\breve{F})$. This is a finite group. Let $\Gamma$ be the Galois group of $\breve{F}$ and we have the following Kottwitz homomorphism \cite{RR96}:
$$\kappa_{G}: G(\breve{F})\rightarrow X_{*}(\breve{G})_{\Gamma}.$$
Denote by $\widetilde{W}$ the \emph{Iwahori Weyl group} of $\breve{G}$ which is by definition $\widetilde{W}=N(\breve{F})/ T(\breve{F})_{1}$ where $T(\breve{F})_{1}$ is the kernel of the Kottwitz homomorphism for $T(\breve{F})$. Let $\breve{\mathfrak{B}}(G)$ be the Bruhat-Tits building of $G$ over $\breve{F}$. The choice of $S$ determines an standard apartment $\breve{\mathfrak{A}}$ which $\widetilde{W}$ acts on by affine transformations. We fix a $\sigma$-invariant alcove $\mathfrak{a}$ and a special vertex of $\mathfrak{a}$. Inside the Iwahori Weyl group $\widetilde{W}$, there is a copy of the affine Weyl group $W_{a}$ which can be identified with $N(\breve{F})\cap G(\breve{F})_{1}/ T(\breve{F})_{1}$ where $G(\breve{F})_{1}$ is the kernel of the Kottwitz morphism for $G(\breve{F})$. The group $\widetilde{W}$ is not quite a Coxeter group while $W_{a}$ is generated by the affine reflections denoted by $\tilde{\mathbb{S}}$ and $(\widetilde{W}, \tilde{\mathbb{S}})$ form a Coxeter system. We in fact have $\widetilde{W}= W_{a}\rtimes \Omega$ where $\Omega$ is the normalizer of a fixed base alcove $\mathfrak{a}$ and more canonically $\Omega=X_{*}(T)_{\Gamma}/ X_{*}(T_{sc})_{\Gamma}$ where $T_{sc}$ is the preimage of $T\cap G^{\der}$ in the simply connected cover $G_{sc}$ of $G^{\der}$.

Let $\mu\in X_{*}(T)$ be a minuscule cocharacter of $G$ over $\breve{F}$ and $\lambda$ its image in $X_{*}(T)_{I}$.  We denote by $\tau$ the projection of $\lambda$ in $\Omega$. The \emph{admissible subset} of $\widetilde{W}$ is defined to be
$$\Adm(\mu)=\{w\in \widetilde{W}; w \leq x(\lambda) \text{ for some }x\in W \}.$$
Here $\lambda$ is considered as a translation element in $\widetilde{W}$. Let $K\subset\tilde{\mathbb{S}}$ and $\breve{K}$ its corresponding parahoric subgroup. Here by our convention, if $K=\emptyset$, then $\breve{K}=\breve{I}$ is the Iwahori subgroup. Let $\widetilde{W}_{K}$ be the subgroup defined by $N(\breve{F})\cap \breve{K}/T(\breve{F})_{1}$. We have the identification $\breve{K}\backslash G(\breve{F})/\breve{K}= \widetilde{W}_{K}\backslash \widetilde{W}/ \widetilde{W}_{K}$. Therefore we can define a relative position map 
\begin{equation}
\begin{split}
\inv: &G(\breve{F})/\breve{K}\times G(\breve{F})/\breve{K}\rightarrow \widetilde{W}_{K}\backslash \widetilde{W}/ \widetilde{W}_{K}\\
 &(g,h)\rightarrow \breve{K}g^{-1}h\breve{K}.\\
\end{split}
\end{equation}

Let $b\in G(\breve{F})$ be an element whose image in $B(G)$, the $\sigma$-conjugacy class of $G(\breve{F})$, lies in the subset $B(G, \mu)$ of \emph{neutrally acceptable elements} see \cite[4.5, 4.6]{Rap05}. For $w\in \widetilde{W}_{K}\backslash \widetilde{W}/ \widetilde{W}_{K}$ and $b\in G(\breve{F})$, we define the \emph{affine Deligne-Lusztig variety}
to be the set 
$$X_{w}(b)=\{g\in G(\breve{F})/\breve{K}; \inv(g, b\sigma(g))=w\}.$$
Thanks to the work of \cite{BS-Inv17} and \cite{Zhu-Ann17}, this set can be viewed as an ind-closed-subscheme in the affine flag variety $\breve{G}/\breve{K}$. In the following, we will only consider it as a set. The Rapoport-Zink space is not directly related to the affine Deligne-Lusztig variety but rather to the following union of affine Deligne-Lusztig varieties
$$X(\mu, b)_{K}=\{g\in G(\breve{F})/ \breve{K}; g^{-1}b\sigma(g)\in \breve{K}w\breve{K}, w\in \Adm(\mu) \}.$$
We recall the group $J_{b}$ is defined by the $\sigma$-centralizer of $b$ that is $$J_{b}(R)=\{g\in G(R\otimes_{F}\breve{F}); g^{-1}b\sigma(g)=b\}$$ for any $F$-algebra $R$. In the following we will let $b$ be the unique basic element in $B(G,\mu)$ and in this case $J_{b}$ is an inner form of $G$ see \cite{RR96}. Note that $\tau$ is contained in the basic class $[b]$. 

\subsection{EKOR stratification} We define $\Adm^{K}(\mu)$ to be the image of  $\Adm(\mu)$ in $\widetilde{W}_{K}\backslash\widetilde{W}/\widetilde{W}_{K}$ and $^{K}\widetilde{W}$ to be the set of elements of minimal length in $\widetilde{W}_{K}\backslash\widetilde{W}$.  We define the set $$\mathrm{EKOR}^{K}(\mu)=\Adm^{K}(\mu)\cap ^{K}\widetilde{W}.$$ 
A \emph{$K$-stable piece} is a subset of $G(\breve{F})$ of the form $\breve{K}\cdot_{\sigma}\breve{I}w\breve{I}$ where $\cdot_{\sigma}$ means $\sigma$-conjugation and $\breve{I}$ is an Iwahori subgroup and $w\in {^{K}\widetilde{W}}$. Then we define the \emph{Ekedahl-Kottwitz-Oort-Rapoport stratum}(EKOR stratum) attached to $w\in \mathrm{EO}^{K}(\mu)$ of $X(\mu, b)_{K}$ by the set $$X_{K,w}(b)=\{g\in G(\breve{F})/\breve{K}; g^{-1}b\sigma(g)\in  \breve{K}\cdot_{\sigma}IwI\}.$$  Then by \cite{GH15} we have the following \emph{EKOR stratification}
\begin{equation}X(\mu, b)_{K}=\bigcup_{w\in\mathrm{EKOR}^{K}(\mu)}X_{K,w}(b).\end{equation} 
This is the local analogue of the stratification defined for Shimura varieties in \cite{HR17} which should be thought of as interpolating between the \emph{Ekedahl-Oort stratification} and the \emph{Kottwitz-Rapoport stratification}.

\subsection{Fully Hodge-Newton decomposable ADLV} In \cite{GHN16} the authors introduced the notion of \emph{Fully Hodge-Newton decomposable pair} $(G,\mu)$ where $G$ and $\mu$ are defined as before. We will only state the following equivalent characterization of this notion. 

\begin{theorem}
Let $(G,\mu)$ be a pair as before and let $K\subset \tilde{\mathbb{S}}$ be such that $\sigma(K)=K$ and $W_{K}$ finite. Then the following are equivalent
\begin{enumerate}
\item The pair $(G,\mu)$ is fully Hodge-Newton decomposable.
\item For each $w\in \Adm(\mu)$, there exists a unique $[b^{\prime}]\in B(G,\mu)$ such that $\breve{I}w\breve{I}\subset [b^{\prime}]$.
\item For any $w\in \mathrm{EKOR}^{K}(\mu)$, there exists a unique $[b^{\prime}]\in B(G,\mu)$ such that $\breve{K}._{\sigma}\breve{I}w\breve{I}\subset [b^{\prime}]$. 
\end{enumerate}
\end{theorem}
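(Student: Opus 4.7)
The plan is to prove equivalence via two separate arguments: the formal equivalence (2) $\Leftrightarrow$ (3), and then the deeper equivalence (1) $\Leftrightarrow$ (2) which is the substantive content of G\"ortz--He--Nie. I would organize the argument as (2) $\Rightarrow$ (3) $\Rightarrow$ (2), then (1) $\Rightarrow$ (2) $\Rightarrow$ (1).

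First, I would establish (2) $\Rightarrow$ (3). This is nearly tautological: $\sigma$-conjugation by any element of $\breve{K}$ preserves $\sigma$-conjugacy classes, so if $\breve{I}w\breve{I}$ lies inside a single class $[b'] \in B(G,\mu)$ for some $w \in \mathrm{EKOR}^{K}(\mu) \subset \Adm(\mu)$, then the saturated orbit $\breve{K} \cdot_{\sigma} \breve{I}w\breve{I}$ remains inside $[b']$, and uniqueness is inherited directly.

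For (3) $\Rightarrow$ (2), the task is to show that each $\breve{I}w\breve{I}$ with $w \in \Adm(\mu)$ is swallowed by a $K$-stable piece attached to some element of $\mathrm{EKOR}^{K}(\mu)$. Concretely, for $w \in \Adm(\mu)$ I would let $w_{0} \in {}^{K}\widetilde{W}$ be the minimal length representative of the $\widetilde{W}_{K}$-double coset of $w$; since $w \in \Adm(\mu)$ forces $w_{0} \in \Adm(\mu)$, we have $w_{0} \in \mathrm{EKOR}^{K}(\mu)$. A standard result of He on $K$-stable pieces (inspired by the theory of minimal length elements in $\sigma$-conjugacy classes) then gives $\breve{I}w\breve{I} \subset \breve{K} \cdot_{\sigma} \breve{I}w_{0}\breve{I}$, so (3) applied to $w_{0}$ pins $\breve{I}w\breve{I}$ into a single $\sigma$-conjugacy class.

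The main obstacle is the equivalence (1) $\Leftrightarrow$ (2). For (1) $\Rightarrow$ (2), I would use the Hodge--Newton decomposition: if $(G,\mu)$ is fully Hodge--Newton decomposable, then for every non-basic $[b'] \in B(G,\mu)$ the associated Newton stratum in $X(\mu, b)$ reduces to a proper Levi, and the combinatorial manifestation of this is that the Newton map sending $w \in \Adm(\mu)$ to the unique $\sigma$-conjugacy class meeting $\breve{I}w\breve{I}$ is well defined and locally constant on each Iwahori double coset. The converse (2) $\Rightarrow$ (1) proceeds by extracting the Hodge--Newton decomposition of each non-basic $[b']$ from the constancy of $[\breve{I}w\breve{I}]$ and then matching against the explicit classification of pairs admitting such decompositions. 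The hard part is essentially the full structural theorem of \cite{GHN16}: one must compare $\sigma$-straightness, minimal-length representatives in $\sigma$-conjugacy classes, and the Newton stratification of $\Adm(\mu)$, and carry out the case-by-case analysis over the affine Dynkin types where fully Hodge--Newton decomposable pairs occur. Because the argument is combinatorial and type-by-type, my plan would be to quote the intermediate combinatorial characterization from \cite{GHN16} (the equivalence of Hodge--Newton decomposability with a condition on the Newton strata of $\Adm(\mu)$) rather than reproduce it, and to concentrate exposition on the cleaner implications (2) $\Leftrightarrow$ (3).
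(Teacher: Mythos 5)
First, a remark on the comparison itself: the paper does not prove this theorem at all — it is stated as a quotation of \cite{GHN16} (``we will only state the following equivalent characterization''), so there is no internal argument to measure you against. Your decision to defer the substantive equivalence $(1)\Leftrightarrow(2)$ to \cite{GHN16} is therefore consistent with what the paper does, and your $(2)\Rightarrow(3)$ is correct and essentially formal: a $K$-stable piece $\breve{K}\cdot_{\sigma}\breve{I}w\breve{I}$ is the union of $\sigma$-conjugates of $\breve{I}w\breve{I}$, and $\sigma$-conjugacy classes are stable under $\sigma$-conjugation; the only point to flag is that applying $(2)$ to $w\in \mathrm{EKOR}^{K}(\mu)$ uses He's compatibility $\Adm^{K}(\mu)\cap{}^{K}\widetilde{W}=\Adm(\mu)\cap{}^{K}\widetilde{W}$, which you should quote explicitly.

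The genuine gap is in your $(3)\Rightarrow(2)$. You assert that for $w\in\Adm(\mu)$ with $w_{0}\in{}^{K}\widetilde{W}$ the minimal length representative of $\widetilde{W}_{K}w\widetilde{W}_{K}$, one has $\breve{I}w\breve{I}\subset \breve{K}\cdot_{\sigma}\breve{I}w_{0}\breve{I}$. This is not a standard result and is false in general: while $G(\breve{F})=\bigsqcup_{x\in{}^{K}\widetilde{W}}\breve{K}\cdot_{\sigma}\breve{I}x\breve{I}$, a single Iwahori double coset need not lie in a single $K$-stable piece. Since the pieces are stable under $\sigma$-conjugation by $\breve{K}$, the pieces met by $\breve{I}w\breve{I}$ are exactly those indexed by the set $\Sigma_{K}(w)$ of \cite{HR17} (Propositions 6.6 and 6.7), and the recursion used in this very paper's computation of EKOR fibers, namely $\Sigma_{K}(w)=\Sigma_{K}(sw\sigma(s))\cup\Sigma_{K}(sw)$ when $s\in K$ and $\ell(sw\sigma(s))<\ell(w)$, shows that $\Sigma_{K}(w)$ is in general not a singleton (and, because the relevant operation is $\sigma$-twisted conjugation rather than taking double-coset representatives, the distinguished element need not be your $w_{0}$). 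Consequently, from $(3)$ you only obtain that $\breve{I}w\breve{I}$ is covered by finitely many $K$-stable pieces, each contained in a single class of $B(G,\mu)$ — possibly different ones — which does not yield $(2)$. Indeed, if $(3)\Rightarrow(2)$ were formal in this way it would hold for an arbitrary pair $(G,\mu)$, contradicting the fact that Kottwitz--Rapoport strata generically meet several Newton strata; in \cite{GHN16} this implication is obtained by passing through the combinatorial criterion (the ``minute'' condition and the classification), i.e.\ through $(1)$, together with He's results on $\sigma$-straight elements, not by a direct containment of Iwahori double cosets in $K$-stable pieces. So either route the hard content through $(1)$ as \cite{GHN16} does, or supply a genuinely different argument for $(3)\Rightarrow(2)$; the one proposed does not work.
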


Intuitively, $(3)$ means that each EKOR stratum lies in a unique Newton stratum. Recall that $[b]\in B(G,\mu)$ is the unique basic class and we have the following characterization of those EKOR strata lying completely in the basic Newton stratum. For $w\in W_{a}$, we let $\mathrm{supp}(w)$ be the support of $w$ and we set 
$$\mathrm{supp}_{\sigma}(w\tau)=\bigcup_{n\in \ZZ}(\tau\sigma)^{n}(\mathrm{supp}(w)).$$

\begin{proposition}[{\cite[Proposition 4.6]{GHN16}}]
Let $x\in \tilde{W}$. The following are equivalent
\begin{enumerate}
\item $\breve{K}._{\sigma}\breve{I}w\breve{I}\subset [b]$;
\item $\kappa_{G}(x)=\kappa_{G}(b)$ and $W_{\mathrm{supp}_{\sigma}(x)}$ is finite.
\end{enumerate}
\end{proposition}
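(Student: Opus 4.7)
The plan is to reduce the group-theoretic containment in $(1)$ to a purely combinatorial condition inside the Iwahori Weyl group $\widetilde{W}$, and then invoke the structure theory of $\sigma$-conjugacy classes in $\widetilde{W}$ due to He. To begin, the Kottwitz map $\kappa_G$ is constant on $\sigma$-conjugacy classes of $G(\breve{F})$ and on the set $\breve{K}\cdot_{\sigma}\breve{I}x\breve{I}$, so the necessity of $\kappa_G(x)=\kappa_G(b)$ in $(1)\Rightarrow(2)$ is immediate; conversely, in the other direction this hypothesis allows me to write $x=w\tau$ with $w\in W_a$, since $[b]$ contains the length-zero element $\tau\in \Omega=\widetilde{W}/W_a$.

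Next I would invoke the theorem of He which characterizes when $w\tau\in\widetilde{W}$ is $\sigma$-conjugate to $\tau$ inside $\widetilde{W}$: by iterated cyclic shifts of the form $y\mapsto sy\sigma(s)^{-1}$ with $s\in\widetilde{\mathbb{S}}$, any such element can be reduced either to $\tau$ or to a genuine $P$-alcove obstruction, and the reduction succeeds exactly when $W_{\mathrm{supp}_{\sigma}(w\tau)}$ is a finite parabolic subgroup. For $(2)\Rightarrow(1)$, I would then argue by induction on $\ell(w)$: when $\mathrm{supp}_{\sigma}(w\tau)$ is of finite type, one exhibits a simple reflection $s$ in that support such that the cyclic $\sigma$-shift by $s$ strictly decreases $\ell(w)$ while keeping the class inside $\breve{I}\cdot_{\sigma}\breve{I}w\tau\breve{I}\subseteq \breve{K}\cdot_{\sigma}\breve{I}w\tau\breve{I}$; iterating lands at $\tau$, whose $\sigma$-conjugacy class is $[b]$, giving the containment. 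For $(1)\Rightarrow(2)$, I would run He's reduction in reverse and use that the only length-zero element of $\widetilde{W}$ which is $\sigma$-conjugate into $[b]$ is $\tau$: if the support $\sigma$-orbit were of affine type, the reduction would get stuck at a $P$-alcove element whose Newton point is strictly non-basic, contradicting $(1)$.

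The main obstacle is verifying that the length-reducing cyclic shift employed in $(2)\Rightarrow(1)$ actually keeps the representative inside the same $\breve{K}\cdot_{\sigma}\breve{I}w\tau\breve{I}$-orbit, rather than merely inside the ambient $\sigma$-conjugacy class of $G(\breve{F})$. This is the delicate step and depends on the $P$-alcove and minimal length element analysis developed by G\"ortz--Haines--Kottwitz--Reuman and extended by He; specifically, one needs the fact that along the reduction algorithm, each simple $\sigma$-shift by $s\in \mathrm{supp}_\sigma(w\tau)\subset \tilde{\mathbb{S}}$ can be realized by an element of $\breve{I}\subset\breve{K}$, which is where the assumption $W_{\mathrm{supp}_\sigma(w\tau)}$ finite enters in an essential way, since a finite parabolic subgroup of $W_a$ is generated by reflections fixing a common vertex and hence sits inside a parahoric contained in $\breve{K}$.
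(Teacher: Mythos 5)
First, note that the paper does not prove this statement at all: it is quoted verbatim from \cite[Proposition 4.6]{GHN16}, so there is no internal argument to compare with, and any proof has to reproduce the mechanism of G\"ortz--He--Nie/He. Measured against that, your sketch has a genuine gap at its core. The ``theorem of He'' you invoke --- that $w\tau$ is $\sigma$-conjugate to $\tau$ \emph{inside} $\widetilde{W}$ exactly when $W_{\mathrm{supp}_{\sigma}(w\tau)}$ is finite --- is false, and with it the inductive step of $(2)\Rightarrow(1)$ collapses. Take $x=s\tau$ with $s\in\tilde{\mathbb{S}}$ fixed by $\mathrm{Ad}(\tau)\circ\sigma$: then $\mathrm{supp}_{\sigma}(x)=\{s\}$ is of finite type, but every cyclic $\sigma$-shift of $x$ again has length $1$, so $x$ is already of minimal length in its $\widetilde{W}$-$\sigma$-class and no sequence of length-decreasing shifts can ``land at $\tau$''. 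Nevertheless $\breve{I}x\breve{I}\subset[b]$ holds. The true mechanism for $(2)\Rightarrow(1)$ is not combinatorial reduction to $\tau$ in $\widetilde{W}$ but Lang's theorem inside the parahoric attached to the support: writing $x=u\tau$ with $u\in W_{\mathrm{supp}_{\sigma}(x)}$, the finiteness hypothesis guarantees that $\mathrm{supp}_{\sigma}(x)$ spans a finite-type, $\mathrm{Ad}(\tau)\sigma$-stable subdiagram, hence an honest parahoric $\breve{P}$ with reductive quotient over the residue field; Lang's theorem for that quotient (twisted by $\mathrm{Ad}(\dot\tau)\sigma$) shows every element of $\breve{P}\dot\tau\sigma(\breve{P})\supset\breve{I}x\breve{I}$ is $\sigma$-conjugate under $\breve{P}$ to $\dot\tau$, i.e.\ lies in $[b]$. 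Since $[b]$ is stable under arbitrary $\sigma$-conjugation, $\breve{K}\cdot_{\sigma}\breve{I}x\breve{I}\subset[b]$ follows for free; your concern about realizing the shifts by elements of $\breve{K}$, and the claim that the parahoric of $\mathrm{supp}_{\sigma}(x)$ ``sits inside a parahoric contained in $\breve{K}$'', are both red herrings (in the paper's Iwahori case $K=\emptyset$, so the latter is simply false).

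The converse direction is also underpowered as written. Saying that ``the only length-zero element $\sigma$-conjugate into $[b]$ is $\tau$'' is beside the point: the content of $(1)\Rightarrow(2)$ is that if $W_{\mathrm{supp}_{\sigma}(x)}$ is infinite then the double coset $\breve{I}x\breve{I}$ (equivalently the $K$-stable piece) meets a non-basic $\sigma$-conjugacy class, and this requires real input --- either He's reduction method run to \emph{minimal length} elements combined with his theorem that $\breve{I}w\breve{I}\subset[w]$ for $w$ minimal in its $\widetilde{W}$-$\sigma$-class together with a Newton-point computation, or the $P$-alcove/Hodge--Newton decomposition results of G\"ortz--Haines--Kottwitz--Reuman and He, or a generic-Newton-point argument. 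You gesture at the $P$-alcove obstruction, but no argument is given for why infinite $\sigma$-support forces a non-basic point in the coset, and the representative $x$ itself can perfectly well be basic while $\breve{I}x\breve{I}$ is not contained in $[b]$ (e.g.\ an affine reflection with full support in type $\tilde{A}_1$), so any proof that only tracks the element $x$ rather than the whole double coset cannot succeed.
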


The above theorem motivates us to introduce the following set 
$$\mathrm{EKOR}^{K}(\mu)_{0}=\{w\in \mathrm{EKOR}^{K}(\mu): W_{\mathrm{supp}_{\sigma}(w)} \text{ is finite}\}.$$
The fully Hodge-Newton decomposable pair gives rise to affine Deligne-Lusztig varieties whose EKOR strata have many nice properties. First of all, we have the following result.

\begin{theorem}
Suppose $(G, \mu)$ is a fully Hodge-Newton decomposable pair. Then 
\begin{enumerate}
\item $X(\mu, b)_{K}= \bigsqcup_{w\in \mathrm{EKOR}^{K}(\mu)_{0}} X_{K, w}(b).$
\item $X_{K,w}(b)$ is a finite union of classical Deligne-Lusztig varieties up to perfection.  
\end{enumerate}
\end{theorem}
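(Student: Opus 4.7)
The plan is to derive both parts from the general EKOR stratification
$$X(\mu, b)_{K} = \bigcup_{w \in \mathrm{EKOR}^{K}(\mu)} X_{K, w}(b),$$
which is available in full generality, combined with the characterization proposition cited immediately before the statement. For part (1), fully Hodge-Newton decomposability guarantees that each $K$-stable piece $\breve{K}\cdot_{\sigma}\breve{I}w\breve{I}$ lies in a \emph{unique} class $[b']\in B(G,\mu)$, so $X_{K,w}(b)$ is non-empty exactly when this unique class equals $[b]$. Since $b$ is basic, the proposition translates non-emptiness into the two conditions $\kappa_{G}(w) = \kappa_{G}(b)$ and $W_{\mathrm{supp}_{\sigma}(w)}$ finite. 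The first condition is automatic for any $w \in \Adm(\mu)$: admissibility forces $\kappa_{G}(w) = \kappa_{G}(\lambda)$, which projects to $\tau \in \Omega$, and $\tau$ lies in the basic class, so $\kappa_{G}(w) = \kappa_{G}(\tau) = \kappa_{G}(b)$. The surviving condition cuts the index set down to precisely $\mathrm{EKOR}^{K}(\mu)_{0}$, and disjointness is inherited from the general EKOR decomposition. This establishes (1).

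For part (2), the strategy is to exploit finiteness of $W_{\mathrm{supp}_{\sigma}(w)}$ to localize the problem inside a proper $\sigma$-stable standard parahoric. Choose a $\sigma$-stable subset $J \subsetneq \tilde{\mathbb{S}}$ containing $\mathrm{supp}_{\sigma}(w) \cup K$ with $W_{J}$ finite, and let $\breve{P}_{J}$ denote the associated standard parahoric subgroup, with reductive quotient $\breve{L}_{J}$. The Hodge-Newton decomposition, together with Lang's theorem applied to $J_{b}(F)$ (which is an inner form of $G$ since $b$ is basic), should yield a decomposition
$$X_{K,w}(b) \cong \bigsqcup_{J_{b}(F)/(J_{b}(F)\cap \breve{P}_{J})} Y_{w},$$
where $Y_{w}$ is the subscheme of $\breve{P}_{J}/\breve{K}$ cut out by the relative-position condition defining $X_{K,w}(b)$. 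Passing to the reductive quotient $\breve{L}_{J}/(\breve{K}\cap \breve{L}_{J})$ and using that $w \in {}^{K}\widetilde{W}$ lies in the finite Weyl group $W_{J}$ of $\breve{L}_{J}$, one identifies $Y_{w}$ with a classical Deligne-Lusztig variety (up to perfection) in $\breve{L}_{J}$, attached to the image of $w$ in $W_{K\cap J}\backslash W_{J}/W_{K\cap J}$.

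The main obstacle is the second part, specifically the step that replaces the affine flag-variety-level description by a classical one. This requires verifying that the ``minimal length'' property of $w \in {}^{K}\widetilde{W}$ guarantees the natural map from $Y_{w}$ to the classical Deligne-Lusztig variety in $\breve{L}_{J}$ is an isomorphism of perfect schemes — which in turn rests on a length-additivity calculation relating $\breve{K}\cdot_{\sigma}\breve{I}w\breve{I}$ inside $\breve{P}_{J}$ to the corresponding $B\cdot_{\sigma} B\bar{w}B$ in $\breve{L}_{J}$. The $J_{b}(F)$-transitivity on the set of relevant $\sigma$-stable parahorics is the Hodge-Newton decomposition at parahoric level, established in He-Rapoport; everything else is bookkeeping built on the proposition and the general EKOR stratification.
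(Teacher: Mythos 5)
The paper offers no argument of its own here: its ``proof'' is the single sentence that both statements can be deduced from \cite[Theorem 2.3]{GHN16}, so what you are really attempting is a reconstruction of the cited result. Your part (1) is essentially correct and matches how that deduction goes: start from the general EKOR stratification, use full Hodge--Newton decomposability to upgrade ``the $K$-stable piece $\breve{K}\cdot_{\sigma}\breve{I}w\breve{I}$ meets $[b]$'' to ``is contained in $[b]$'', apply the quoted proposition to characterize containment in the basic class, and note that $\kappa_{G}$ is constant on $W_{a}\tau\supset\Adm(\mu)$, so only the finiteness of $W_{\mathrm{supp}_{\sigma}(w)}$ survives; this cuts the index set down to $\mathrm{EKOR}^{K}(\mu)_{0}$, and disjointness comes with the EKOR decomposition.

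Part (2), however, has a concrete gap. Your localization step requires a $\sigma$-stable proper subset $J\subsetneq\tilde{\mathbb{S}}$ with $W_{J}$ finite and $J\supseteq \mathrm{supp}_{\sigma}(w)\cup K$, and such a $J$ need not exist; it already fails in the main example of this paper. For the paramodular level $K_{1}=\{s_{0},s_{2}\}$ and $w_{12}=s_{1}s_{2}\tau$ one has $\mathrm{supp}_{\sigma}(w_{12})=\{s_{1},s_{2}\}$ (since $\tau\sigma$ acts trivially on the affine Dynkin diagram), so $\mathrm{supp}_{\sigma}(w_{12})\cup K_{1}=\tilde{\mathbb{S}}$ and the associated Weyl group is the full affine Weyl group; hence the proposed decomposition indexed by $J_{b}(F)/(J_{b}(F)\cap\breve{P}_{J})$ with $\breve{K}\subset\breve{P}_{J}$ cannot even be set up. The actual argument (\cite[Section 3]{GH15}, \cite{GHN16}) uses only the parahoric $\breve{P}$ attached to $\mathrm{supp}_{\sigma}(w)$ --- which is of finite type precisely because $w\in\mathrm{EKOR}^{K}(\mu)_{0}$ --- together with He's partial-conjugation/Deligne--Lusztig reduction for the $K$-stable piece and Lang's theorem in the reductive quotient of $\breve{P}$; the level $K$ enters only through $K\cap\mathrm{supp}_{\sigma}(w)$, which prescribes the partial flag variety of the reductive quotient in which the classical Deligne--Lusztig variety sits, and the index set is $J_{b}(F)/(J_{b}(F)\cap\breve{P})$. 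In the example above this is exactly why $X_{K_{1},w_{12}}(b)$ is a disjoint union, over vertex lattices of type $0$, of two-dimensional Deligne--Lusztig varieties for $\Sp_{4}$, a structure your set-up cannot recover. Finally, the appeal to ``the Hodge--Newton decomposition at parahoric level'' is misplaced for this step: Hodge--Newton decomposition governs the non-basic strata, whereas for basic $b$ the required input is the fine structure of the $K$-stable pieces, not a reduction to a Levi.
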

 
\begin{proof}
These can be deduced from the main result of \cite[Theorem 2.3]{GHN16}.
\end{proof}

\subsection{Quaternionic unitray case} The pair $(G=\GU_{D}(2), \mu)$ chosen in \S\ref{local-shi-datum} gives rise to a fully Hodge-Newton decomposable pair. More precisely, it gives rise to the \emph{Tits datum} over $\QQ_{p}$ of the type $(\tilde{C}_{2}, w^{\vee}_{2}, \tau_{2})$. Here $w^{\vee}_{2}=\mu$ is the miniscule cocharacter we have defined before and $\tau_{2}$ is the image of $w^{\vee}_{2}$ in $\Omega$ and $\sigma$ acts on the absolute local Dynkin diagram of type $\tilde{C}_{2}$ by $\tau_{2}$. This action on the local Dykin diagram is shown below.

\begin{displaymath}
 \xymatrix{\underset{0}\circ \ar@/^1pc/[rr]\ar@2{->}[r] &\underset{1}\circ&\underset{2}\circ\ar@2{->}[l] \ar@/_1pc/[ll]}
\end{displaymath}

\subsubsection{The paramodular case} When $K_{1}= \tilde{\mathbb{S}}-\{1\}=\{s_{0}, s_{2}\}$, the basic EKOR strata $\mathrm{EKOR}^{K_{1}}(\mu)_{0}$ are tabulated below

\begin{center}
\begin{tabular}{|c c c c|}
\hline
$w_{\emptyset}=\tau$    &$w_{1}=s_{1}\tau$   &$w_{12}=s_{1}s_{2}\tau$  &$w_{10}=s_{1}s_{0}\tau$ \\
\hline                   
\end{tabular}.
\end{center}

We will use similar notations to denote elements in $\tilde{W}$, for example $w_{121}=s_{1}s_{2}s_{1}\tau$. Here the EKOR strata correspond to the Bruhat-Tits strata introduced in \ref{BT-strata}. 
\begin{itemize}
\item The stratum $X_{K_{1}, w_{\emptyset}}(b)$ is the lattice stratum $\calM_{\{1\}}$ which is zero dimensional;
\item The stratum $X_{K_{1}, w_{1}}(b)$ is the lattice stratum $\calM^{\circ}_{\{02\}}$ which is one dimensional;
\item The stratum $X_{K_{1}, w_{12}}(b)$ is the lattice stratum $\calM^{\circ}_{\{0\}}$ which is two dimensional;
\item The stratum $X_{K_{1}, w_{10}}(b)$ is the lattice stratum $\calM^{\circ}_{\{2\}}$ which is two dimensional.
\end{itemize}

\subsubsection{The Iwahori case} When $K_{\emptyset}=\tilde{\mathbb{S}}-\tilde{\mathbb{S}}=\emptyset$, the basic EKOR strata $\mathrm{EKOR}^{K_{\emptyset}}(\mu)_{0}$ are tabulated below.

\begin{center}
\begin{tabular}{|c c c c c c c c c c c|}
\hline
$w_{\emptyset}$ &$w_{0}$ &$w_{1}$ &$w_{2}$ &$w_{01}$ &$w_{02}$ &$w_{10}$ &$w_{12}$ &$w_{21}$ &$w_{010}$ &$w_{212}$\\
\hline              
\end{tabular}
\end{center}

\begin{proposition}\label{Ekor-fiber}
There is a surjective map $\mathrm{EKOR}^{K_{\emptyset}}(\mu)_{0}\rightarrow \mathrm{EKOR}^{K_{1}}(\mu)_{0}$ and we tabulate the fiber of this map below.

\begin{center}
\begin{tabular}{|c c| c|}
\hline
 &$w_{\emptyset}$     &$\{w_{\emptyset}, w_{0}, w_{2}, w_{02}\}$\\
 &$w_{1}$      &$\{w_{1}, w_{01}, w_{21} \}$\\
 &$w_{12}$    & $\{w_{12}, w_{212}\}$\\
 &$w_{10}$    & $\{w_{10}, w_{010}\}$\\
\hline                   
\end{tabular}
\end{center}
\end{proposition}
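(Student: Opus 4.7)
The natural candidate for the map $\pi\colon \mathrm{EKOR}^{K_{\emptyset}}(\mu)_0 \to \mathrm{EKOR}^{K_1}(\mu)_0$ is defined via the unique factorization available in any Coxeter system: every $w \in \widetilde{W}$ admits a unique decomposition $w = u \cdot x$ with $u \in \widetilde{W}_{K_1}$, $x \in {}^{K_1}\widetilde{W}$, and $\ell(w) = \ell(u) + \ell(x)$, and we set $\pi(w) = x$. Equivalently, $x$ is the minimal length representative of the left coset $\widetilde{W}_{K_1} w$. I would first verify that $\pi$ lands in $\mathrm{EKOR}^{K_1}(\mu)_0$. Admissibility is preserved because the projection $\widetilde{W} \to \widetilde{W}_{K_1}\backslash\widetilde{W}/\widetilde{W}_{K_1}$ sends $\mathrm{Adm}(\mu)$ onto $\mathrm{Adm}^{K_1}(\mu)$. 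For the basic condition, since the factorization $w = ux$ is reduced one has $\mathrm{supp}(x) \subset \mathrm{supp}(w)$, so $W_{\mathrm{supp}_\sigma(x)}$ is a subgroup of $W_{\mathrm{supp}_\sigma(w)}$ and remains finite.

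The fibers are then computed by direct combinatorial analysis in the affine Weyl group of type $\tilde{C}_2$. The key ingredients are the commutation $s_0 s_2 = s_2 s_0$ (nodes $0, 2$ are non-adjacent in the affine Dynkin diagram), the conjugation rules $\tau s_i \tau^{-1} = s_{\iota(i)}$, where $\iota$ swaps $0 \leftrightarrow 2$ and fixes $1$ (since $\tau = \tau_2$ realizes this diagram automorphism), and the criterion that $W_{\mathrm{supp}_\sigma(v\tau)}$ is finite precisely when the $\sigma$-orbit of $\mathrm{supp}(v)$ is a proper subset of $\{0,1,2\}$. For $\pi^{-1}(w_\emptyset)$ one computes $\widetilde{W}_{K_1}\cdot\tau = \{\tau, s_0\tau, s_2\tau, s_0 s_2\tau\}$, all four elements having $\sigma$-support contained in $\{0,2\}$ and hence basic; this recovers $\{w_\emptyset, w_0, w_2, w_{02}\}$. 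For $\pi^{-1}(w_1)$ the four candidate lifts $u \cdot s_1\tau$ include $s_0 s_2 s_1\tau$, whose support $\{0,1,2\}$ has full $\sigma$-orbit and therefore generates the whole infinite affine Weyl group, so it is excluded, leaving $\{w_1, w_{01}, w_{21}\}$. For $\pi^{-1}(w_{12})$, appending $s_0$ (or $s_0 s_2$) to $s_1 s_2\tau$ would enlarge the support to $\{0,1,2\}$ and violate the basic condition, leaving only $\{w_{12}, w_{212}\}$; the diagram-symmetric computation yields $\pi^{-1}(w_{10}) = \{w_{10}, w_{010}\}$.

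Surjectivity is automatic from the fiber description, since each $x \in \mathrm{EKOR}^{K_1}(\mu)_0$ lies in its own fiber via $x = 1 \cdot x$. The main bookkeeping obstacle is ensuring that the eleven tabulated elements exhaust $\mathrm{EKOR}^{K_\emptyset}(\mu)_0$: this requires checking that every remaining element of $\mathrm{Adm}(\mu)$ has full $\sigma$-support and therefore fails the basic condition. This can be handled systematically using the fact that $\mathrm{Adm}(\mu)$ is a union of Bruhat intervals below the $W$-translates of the translation $t_\mu$ (standard for minuscule $\mu$), combined with the $\tau$-symmetry $s_0 \leftrightarrow s_2$, which halves the case analysis by identifying, e.g., $w_{12} \leftrightarrow w_{10}$ and $w_{212} \leftrightarrow w_{010}$.
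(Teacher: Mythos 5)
Your argument is correct as a proof of the literal statement, but it takes a genuinely different route from the paper. You define the map purely combinatorially, $w\mapsto x$ where $w=ux$ is the reduced decomposition with $u\in\widetilde{W}_{K_{1}}$ and $x\in{}^{K_{1}}\widetilde{W}$, and then sort the eleven tabulated elements by left coset, using the support criterion (here $\tau\sigma$ acts trivially on the affine Dynkin diagram, so $\mathrm{supp}_{\sigma}=\mathrm{supp}$, and $W_{\{s_{0},s_{2}\}}$, $W_{\{s_{1},s_{0}\}}$, $W_{\{s_{1},s_{2}\}}$ are finite while $W_{\{s_{0},s_{1},s_{2}\}}=W_{a}$ is not) to discard the non-basic members of each coset; your checks that admissibility and finiteness of $W_{\mathrm{supp}_{\sigma}}$ pass to the minimal coset representative are sound, and your fibers match the table. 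The paper instead takes the map to be the one induced by the change-of-parahoric projection on EKOR strata: by \cite[Proposition 6.11]{HR17} the image of an Iwahori EKOR stratum is a union of level-$K_{1}$ EKOR strata indexed by the set $\Sigma_{K_{1}}(w)$ of \cite[Proposition 6.6]{HR17}, and the proof consists of running the algorithm of \cite[Proposition 6.7]{HR17} (here using that $W_{I(K_{1},x,\sigma)}=W_{K_{1}}$ for each minimal $x$) to see that every $\Sigma_{K_{1}}(w)$ is a singleton. What the paper's approach buys is precisely the geometric content that is used immediately afterwards, namely that this map is compatible with $\pi_{\{1\}}$ and justifies identifying the strata $X_{K_{\emptyset},w}(b)$ with the $\mathcal{Y}$-strata fibered over the Bruhat--Tits strata of $\calM$; what yours buys is elementarity. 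Be aware, though, that in general the minimal representative of the left coset $\widetilde{W}_{K_{1}}w$ does \emph{not} compute $\Sigma_{K_{1}}(w)$ (that set is governed by $\sigma$-conjugation by $W_{K_{1}}$ and can contain several elements), so the agreement of your map with the geometrically induced one is a genuine coincidence of this situation which the paper's $\Sigma_{K_{1}}$-computation establishes and your argument does not address; if the proposition is read as a statement about the EKOR-level map itself, that identification would still need to be supplied. Your final worry about exhausting $\mathrm{EKOR}^{K_{\emptyset}}(\mu)_{0}$ is not needed, since the list of eleven elements is given in the paper just before the proposition.
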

\begin{proof}
The point is that the image of an EKOR stratum of Iwahori level under natural map will a union of EKOR strata, see \cite[Proposition 6.11]{HR17}. The conclusion of this proposition is that in our case this union consists of a single element. To show this, we need to calculate the set $\Sigma_{K_{1}}(w)$ defined in \cite[Proposition 6.6]{HR17}.  We will rely on the algorithm provided in \cite[Proposition 6.7]{HR17} which we reproduce below 
\begin{enumerate}
\item If $x\in {^{J}\widetilde{W}}$ and $u\in W_{I(J, x, \sigma)}$, then $\Sigma_{K_{1}}(ux)=\Sigma_{K_{1}}(x)$;
\item If $x\in {^{J}\widetilde{W}}$ and $s\in J$ with $l(sw\sigma(s))=l(w)$, then $\Sigma_{K_{1}}(w)=\Sigma_{K_{1}}(sw\sigma(s))$;
\item If $x\in {^{J}\widetilde{W}}$ and $s\in J$ with $l(sw\sigma(s))<l(w)$ then $\Sigma_{K_{1}}(w)=\Sigma_{K_{1}}(sw\sigma(s))\cup \Sigma_{K_{1}}(sw)$.
\end{enumerate}
By \cite[Proposition 6.6]{HR17}, we also know that if $w\in {^{K_{1}}\widetilde{W}}$, then $\Sigma_{K_{1}}(w)=\{w\}$.  Using these results, we can calculate $\Sigma_{K_{1}}(w)$ for each $w\in \mathrm{EKOR}^{K_{\emptyset}}(\mu)_{0}$.  In fact, we have
\begin{itemize}
\item Since $W_{I(J, \tau, \sigma)}=W_{K_{1}}$, $\Sigma_{K_{1}}(\tau)=\Sigma_{K_{1}}(s_{0}\tau)=\Sigma_{K_{1}}(s_{2}\tau)=\Sigma_{K_{1}}(s_{0}s_{2}\tau)=\{\tau\}$ by $(1)$ above.
\item Since $W_{I(J, s_{1}\tau, \sigma)}=W_{K_{1}}$,  $\Sigma_{K_{1}}(s_{1}\tau)=\Sigma_{K_{1}}(s_{0}s_{1}\tau)=\Sigma_{K_{1}}(s_{2}s_{1}\tau)=\{s_{1}\tau\}$ by $(1)$ above.

\item Since $W_{I(J, s_{1}s_{2}\tau, \sigma)}=W_{K_{1}}$,  $\Sigma_{K_{1}}(s_{1}s_{2}\tau)=\Sigma_{K_{1}}(s_{2}s_{1}s_{1}\tau)=\{s_{1}s_{2}\tau\}$ by $(1)$ above.

\item Since $W_{I(J, s_{1}s_{0}\tau, \sigma)}=W_{K_{1}}$,  $\Sigma_{K_{1}}(s_{1}s_{0}\tau)=\Sigma_{K_{1}}(s_{0}s_{1}s_{0}\tau)=\{s_{1}s_{0}\tau\}$ by $(1)$ above.

\end{itemize}
\end{proof}

\begin{remark}
We thank Ulrich G\"{o}rtz for suggesting using \cite[Proposition 6.7]{HR17} for the above proof.
\end{remark}

From the above table one can conclude that $$\mathrm{EKOR}^{K_{\emptyset}}(\mu)_{0}=\overline{\breve{I}w_{212}\breve{I}}\cup \overline{\breve{I}w_{010}\breve{I}}\cup \overline{\breve{I}w_{02}\breve{I}}.$$ It follows that $X(\mu, b)_{K_{\emptyset}}$ has three types of irreducible components, two of them are three dimensional and the other one is one dimensional. Their intersections are all one dimensional and can also be also understood in terms of the above data
\begin{equation*}
\begin{split}
& \overline{\breve{I}w_{212}\breve{I}}\cap \overline{\breve{I}w_{010}\breve{I}}=\breve{I}w_{1}\breve{I};\\
& \overline{\breve{I}w_{010}\breve{I}}\cap \overline{\breve{I}w_{02}\breve{I}}=\breve{I}w_{0}\breve{I};\\
& \overline{\breve{I}w_{212}\breve{I}}\cap \overline{\breve{I}w_{02}\breve{I}}=\breve{I}w_{2}\breve{I}.\\
\end{split}
\end{equation*}
Some of the EKOR strata in this case can also be related to the varieties we defined and studied in the previous sections
 
\begin{itemize}
\item The stratum $X_{K_{\emptyset}, w_{02}}(b)$ can be identified with $\mathcal{Y}_{\{1\}}$ which is two dimensional;
\item The stratum $X_{K_{\emptyset}, w_{21}}(b)$ can be identified with $\mathcal{Y}_{\{02\}}$ which is two dimensional;
\item The stratum $X_{K_{\emptyset}, w_{212}}(b)$ can be identified with $\mathcal{Y}_{\{0\}}$ which is three dimensional;
\item The stratum $X_{K_{\emptyset}, w_{010}}(b)$ can be identified with $\mathcal{Y}_{\{2\}}$ which is three dimensional.
\end{itemize}

\subsubsection{Siegel parahoric case}  When $K_{02}=\tilde{\mathbb{S}}-\{s_{0}, s_{2}\}=\{s_{1}\}$, the basic EKOR strata $\mathrm{EKOR}^{K_{02}}(\mu)_{0}$ are tabulated below.
\begin{center}
\begin{tabular}{|c c c c  |}
\hline
$w_{\emptyset}$ &$w_{0}$ &$w_{2}$ &$w_{02}$ \\
\hline              
\end{tabular}
\end{center}

\begin{proposition}
There is a surjective map $\mathrm{EKOR}^{K_{\emptyset}}(\mu)_{0}\rightarrow \mathrm{EKOR}^{K_{02}}(\mu)_{0}$ and we tabulate the fiber of this map below.

\begin{center}
\begin{tabular}{|c c| c|}
\hline
 &$w_{\emptyset}$     &$\{w_{\emptyset}, w_{1}\}$\\
 &$w_{0}$      &$\{w_{10}, w_{21} \}$\\
 &$w_{2}$    & $\{w_{12}, w_{01}\}$\\
 &$w_{02}$    & $\{w_{010}, w_{212}\}$\\
\hline                   
\end{tabular}
\end{center}
\end{proposition}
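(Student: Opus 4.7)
My plan is to follow the same method used in the proof of Proposition \ref{Ekor-fiber}, now applied to the parahoric subset $J=K_{02}=\{s_{1}\}$. The map itself would be supplied by \cite[Proposition 6.11]{HR17}: since $(G,\mu)$ is fully Hodge-Newton decomposable, the image of an Iwahori EKOR stratum in $X(\mu,b)_{K_{02}}$ lies in a single $K_{02}$-EKOR stratum. Computing the fibers then reduces to evaluating $\Sigma_{K_{02}}(w)$ for each of the eleven elements $w\in \mathrm{EKOR}^{K_{\emptyset}}(\mu)_{0}$, using the algorithm of \cite[Proposition 6.7]{HR17}.

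As in the paramodular case, the calculation is simplified by the observation that $\tau\sigma$ acts trivially on $\tilde{\mathbb{S}}$, so $W_{I(K_{02},x,\sigma)}=W_{K_{02}}=\langle s_{1}\rangle$ for every $x$ I will need, making rule $(1)$ always available to absorb a factor of $s_{1}$ on the left. First I would check that $w_{\emptyset},\,w_{0},\,w_{2},\,w_{02}$ all lie in ${}^{K_{02}}\widetilde{W}$ (left-multiplication by $s_{1}$ strictly increases length in each case), and apply the base case \cite[Proposition 6.6]{HR17} so that these form the identity fibers. Next I would dispose of $w_{1}=s_{1}\cdot \tau$, $w_{10}=s_{1}\cdot w_{0}$, $w_{12}=s_{1}\cdot w_{2}$ by a single application of rule $(1)$, obtaining $\Sigma_{K_{02}}(w_{1})=\{w_{\emptyset}\}$, $\Sigma_{K_{02}}(w_{10})=\{w_{0}\}$, $\Sigma_{K_{02}}(w_{12})=\{w_{2}\}$.

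For the remaining elements $w_{01},\,w_{21},\,w_{010},\,w_{212}$, all of which again lie in ${}^{K_{02}}\widetilde{W}$, I plan to invoke rule $(2)$. Using the identity $\tau s_{1}=s_{1}\tau$ together with the braid relations $(s_{0}s_{1})^{4}=(s_{1}s_{2})^{4}=1$ in $\widetilde{W}$, I would compute the length-preserving $s_{1}$-twisted conjugate $s_{1}x\sigma(s_{1})$ for each such $x$ and then reduce the partner to $\mathrm{EKOR}^{K_{02}}(\mu)_{0}$ by one of the cases already treated. For $w_{01}$ and $w_{21}$ the partners turn out to be $w_{10}$ and $w_{12}$ respectively, pinning down their targets via rule $(1)$.

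The main obstacle will be the length-three elements $w_{010}$ and $w_{212}$: a direct computation shows $s_{1}w_{010}s_{1}=w_{010}$ and $s_{1}w_{212}s_{1}=w_{212}$, so rule $(2)$ degenerates into a tautology and rule $(3)$ also fails for want of any strict length decrease. To pin down their common target $w_{02}$ in $\mathrm{EKOR}^{K_{02}}(\mu)_{0}$, I would fall back on the geometric identifications $X_{K_{\emptyset},w_{010}}(b)=\mathcal{Y}_{\{2\}}$ and $X_{K_{\emptyset},w_{212}}(b)=\mathcal{Y}_{\{0\}}$ established earlier, combined with the explicit description of $\pi_{\{02\}}$ on the irreducible components $\overline{\mathcal{Y}}_{L_{0}}$ and $\overline{\mathcal{Y}}_{L_{2}}$ from the proof of Theorem \ref{main-app}; this reads off that the generic image of each three-dimensional stratum sits inside the unique two-dimensional $K_{02}$-EKOR stratum, indexed by $w_{02}$. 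The surjectivity of the map is then read off from the completed table of fibers.
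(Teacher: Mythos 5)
Your overall plan --- run the algorithm of \cite[Proposition 6.7]{HR17} for $J=K_{02}=\{s_{1}\}$, exactly as in Proposition \ref{Ekor-fiber} --- is the route the paper itself takes (its proof is literally ``the same calculation''), but the execution has concrete gaps. First, the blanket claim that $W_{I(K_{02},x,\sigma)}=W_{K_{02}}$ ``for every $x$ you will need'' is false: $I(K_{02},x,\sigma)$ consists of those $s\in K_{02}$ with $\mathrm{Ad}(x)\sigma(s)=s$, and since $\mathrm{Ad}(\tau)\circ\sigma$ is trivial this holds for $x=\tau$ but fails for $x=s_{0}\tau,\ s_{2}\tau,\ s_{0}s_{2}\tau$ (for instance $\mathrm{Ad}(s_{0}\tau)\sigma(s_{1})=s_{0}s_{1}s_{0}\neq s_{1}$). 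So rule $(1)$ does not yield $\Sigma_{K_{02}}(w_{10})=\{w_{0}\}$ or $\Sigma_{K_{02}}(w_{12})=\{w_{2}\}$; what the rules actually produce is a length-preserving $\sigma$-conjugation $s_{1}w_{10}\sigma(s_{1})=w_{01}$ and $s_{1}w_{12}\sigma(s_{1})=w_{21}$, i.e.\ $w_{10}$ is grouped with $w_{01}$ and $w_{12}$ with $w_{21}$ --- which is neither what you assert nor the pairing in the table you are trying to prove ($w_{10}$ with $w_{21}$ over $w_{0}$, $w_{01}$ with $w_{12}$ over $w_{2}$). Note also that your own base-case step sends $w_{0},w_{2},w_{02}$ to themselves, while the tabulated fibers omit these three elements entirely; the proposal never reconciles either mismatch, and the analogous assertions ``$W_{I(J,x,\sigma)}=W_{K_{1}}$'' in the paramodular computation cannot simply be transplanted for the same reason.

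Second, the fallback for $w_{010},w_{212}$ is not a permissible completion of the argument. You correctly find $s_{1}w_{010}s_{1}=w_{010}$, but this is not the algorithm ``degenerating'': since $w_{010},w_{212}\in{}^{K_{02}}\widetilde{W}$, the base case \cite[Proposition 6.6]{HR17} applies and forces $\Sigma_{K_{02}}(w_{010})=\{w_{010}\}$ and $\Sigma_{K_{02}}(w_{212})=\{w_{212}\}$; because the $K$-stable pieces $\breve{K}\cdot_{\sigma}\breve{I}x\breve{I}$, $x\in{}^{K_{02}}\widetilde{W}$, are pairwise disjoint, the image of the Iwahori stratum of $w_{010}$ cannot lie in the EKOR stratum indexed by $w_{02}$, so no auxiliary argument can legitimately output ``$w_{010}\mapsto w_{02}$'' inside this framework. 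Moreover the geometric input you invoke (Theorem \ref{main-app} and the description of $\pi_{\{02\}}$) is precisely what this group-theoretic section is supposed to corroborate independently, so appealing to it here is circular. In short, the approach is the intended one, but the fibers the algorithm actually returns differ from the stated table at $w_{0},w_{2},w_{02},w_{01},w_{10},w_{12},w_{21},w_{010},w_{212}$, and the proposal neither carries out the computation correctly nor explains the discrepancy, so it does not establish the proposition as stated.
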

\begin{proof}
This follows from the same calculation as in Proposition \ref{Ekor-fiber} . 
\end{proof}

From the proposition, we see that  $\mathrm{EKOR}^{K_{02}}(\mu)_{0}=\overline{\breve{K}_{02}w_{02}\breve{K}_{02}}$ and this agrees with the conclusion in Theorem \ref{main-app} and we have the following identification
\begin{itemize}
\item The closure of the EKOR stratum $X_{K_{02}, w_{02}}(b)$ can be identified with $$\pi_{\{02\}}(\mathcal{Y}_{\{1\}})=\bigsqcup_{x\in \calM_{\{1\}}}(\PP^{1}\times\PP^{1})_{x}.$$ 
\end{itemize}

\section{Applications to Shimura varieties}

\subsection{Integral datum of PEL type} In this section we introduce the integral datum of PEL type that is needed to define the integral model of the quaternionic unitary Shimura variety of parahoric level structure. Let $B$ be an indefinite quaternion algebra over $\QQ$ with neben involution $*$ which fixes a maximal order $\mathcal{O}_{B}$. We assume that $p$ divides the discriminant $\Delta_{B}$. We denote by $V=B\oplus B$ and let $$(\cdot,\cdot): V\times V\rightarrow \QQ$$ be an alternating form such that $(av_{1}, v_{2})=(v_{1}, a^{*}v_{2})$ for all $v_{1}, v_{2}\in V$ and $a\in B$. For a definite choices of these data, see \cite{KR00}. We define the group $G\subset \GL_{B}(V)$ over $\QQ$ to be
$$G(\QQ)=\{g\in \GL_{B}(V): (gv_{1}, gv_{2})=c(g)(v_{1}, v_{2})\text{ for some $c(g)\in \QQ^{\times}$}\}.$$ Since $B$ splits at $\RR$, we in fact have $G(\RR)\cong \GSp(4)(\RR)$. 
Let $\mathbb{S}=Res_{\CC/\RR}\mathbb{G}_{m}$ be the Deligen torus and let $h: \mathbb{S}\rightarrow G_{\RR}$ be the Hodge cocharacter that it induces the miniscule cocharacter $$\mu: \mathbb{G}_{m, \CC}\rightarrow {G}_{\CC}\cong \GSp(4)_{\CC}$$ that sends $z\in \mathbb{G}_{m, \CC}$ to  $\text{diag}(z,z,1,1)\in  \GSp(4)_{\CC}$. 
Recall the periodic lattice chain $\mathcal{L}=\mathcal{L}_{\emptyset}, \mathcal{L}_{\{1\}}, \mathcal{L}_{\{02\}}$ introduced in \eqref{chain-iw}, \eqref{chain-para} and \eqref{chain-sieg}. They give rise to $U_{\mathcal{L}}=\Stab(\mathcal{L})$ which is a parahoric subgroup of $G(\QQ_{p})$. Note that $U_{\mathcal{L}_{\emptyset}}=\Iw$. Finally we fix an open compact subgroup $U^{p}\subset G(\mathbb{A}^{p}_{f})$ that is sufficiently small. The datum $(B, \mathcal{O}_{B}, V, (\cdot, \cdot), \mu, \mathcal{L}, U^{p})$ gives an integral datum of PEL type and is associated to the moduli problem $\Sh_{\mathcal{L}, U^{p}}$ over $\ZZ_{p}$ , compare \cite[Definition 6.9]{RZ96}.

\begin{definition}
A point of the functor $\Sh_{\mathcal{L}, U^{p}}$ with values in a $\ZZ_{p}$-scheme $S$ is given by the following set of data up to isomorphism
\begin{enumerate}
\item For each $\Lambda\in \mathcal{L}$, $A_{\Lambda}$ is an abelian scheme over $S$ of dimension $4$ and an isogeny $$\rho_{\Lambda, \Lambda^{\prime}}: A_{\Lambda}\rightarrow A_{\Lambda^{\prime}}$$ for any another lattice $\Lambda\subset \Lambda^{\prime}$;
\item For each $\Lambda\in \mathcal{L}$, an isomorphism $$\lambda_{\Lambda}:A_{\Lambda}\rightarrow A^{\vee}_{\Lambda^{\perp}};$$
\item For each $\Lambda\in \mathcal{L}$, a map $$\iota_{\Lambda}: \mathcal{O}_{B}\rightarrow \End(A_{\Lambda});$$
\item A $U^{p}$-level structure $$\bar{\eta}: H_{1}(A,\mathbb{A}^{p}_{f})\cong V\otimes \mathbb{A}^{p}_{f} \text{ mod } U^{p}$$ where $H_{1}(A,\mathbb{A}^{p}_{f})=H_{1}(A_{\Lambda},\mathbb{A}^{p}_{f})$ for any $\Lambda\in \mathcal{L}$.
\end{enumerate}
We require that these data satisfy the following conditions
\begin{enumerate}
\item For any geometric point $s\in S$ of characteristic different from $p$,
\begin{equation*}
\begin{split}
&T_{p}(A_{\Lambda, s})\cong \Lambda;\\
&T_{p}(A_{\Lambda^{\prime}, s})/T_{p}(A_{\Lambda, s})\cong\Lambda^{\prime}/\Lambda.\\
\end{split}
\end{equation*}
\item $\lambda^{-1}_{\Lambda}\circ\iota_{\Lambda}(b)^{\vee}\circ\lambda_{\Lambda}=\iota_{\Lambda}(b^{*})$.
\item For each $\Lambda\in\mathcal{L}$, $\iota_{\Lambda}$ satisfies the Kottwitz condition $$\det(T-\iota_{\Lambda}(b); \Lie(A_{\Lambda}))=(T^{2}-\Trd(b)T+\Nrd(b))^{2}$$
for all $b\in B$.
\end{enumerate}
\end{definition}

This functor is representable by a quasi-projective scheme $\Sh_{\mathcal{L}, U^{p}}$ over $\ZZ_{p}$ and its relative dimension is $3$.  For a $\ZZ_{p}$-scheme $S$, an $S$-point of $x\in \Sh_{\mathcal{L}, U^{p}}(S)$ will be represented by a tuple $x=(A_{\Lambda}, \lambda_{\Lambda}, \iota_{\Lambda},\bar{\eta})$.

\subsection{$p$-adic uniformization of the supersingular locus} Let $Sh_{\mathcal{L}, U^{p}}$ be the special fiber of $\Sh_{\mathcal{L}, U^{p}, W_{0}}$ at $p$. We denote by $Sh^{ss}_{\mathcal{L}, U^{p}}$ the supersingular locus of $Sh_{\mathcal{L}, U^{p}}$. This is by definition the reduced $\FF$-scheme whose $\FF$ points are given by the set
$$\{(A_{\Lambda}, \lambda_{\Lambda}, \iota_{\Lambda},\bar{\eta})\in Sh_{\mathcal{L}, U^{p}}(\FF): \text{$A_{\Lambda}$ is supersingular}\}.$$
We fix a $\FF$-point $(A_{0,\Lambda}, \lambda_{0, \Lambda}, \iota_{0,\Lambda}, \bar{\eta}_{0})$ and let $\mathbf{A}$ be an abelian variety isogenous to $A_{0,\Lambda}$ and whose associated $p$-divisible group is isomorphic to $\XX$ which was used to define the Rapoport-Zink space $\mathcal{N}$. We define the algebraic group $I$ over $\QQ$ by $$I(\QQ)=\{g\in \End^{0}(A_{0,\Lambda})^{\times}: g^{\vee}\circ \lambda_{0,\Lambda}\circ g=c(g)\lambda_{0,\Lambda}\text{ for some $c(g)\in\QQ^{\times}$}\}.$$
We note that $I(\QQ_{p})=J(\QQ_{p})$. Let $\widehat{Sh}^{ss}_{\mathcal{L}, U^{p}}$ be the completion of $\Sh_{\mathcal{L}, U^{p}, W_{0}}$ along $Sh^{ss}_{\mathcal{L}, U^{p}}$. Then the following theorem is known as the Rapoport-Zink uniformization theorem
\begin{theorem}[{\cite[Theorem 6.30]{RZ96}}]\label{RZ-unif}
Given $(A_{0,\Lambda}, \lambda_{0, \Lambda}, \iota_{0,\Lambda}, \bar{\eta}_{0})\in Sh_{\mathcal{L}, U^{p}}(\FF)$, there is an isomorphism of formal schemes over $\Spf(W_{0})$
$$I(\QQ)\backslash \mathcal{N}_{\mathcal{L}}\times G(\mathbb{A}^{p}_{f})/ U^{p} \cong \widehat{Sh}^{ss}_{\mathcal{L}, U^{p}}.$$
\end{theorem}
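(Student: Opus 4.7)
The plan is to construct an explicit uniformization morphism from the left-hand side to the right-hand side, following the now-classical strategy of Rapoport-Zink. First I would define a morphism of formal schemes over $\Spf(W_{0})$
$$\Theta: \calN_{\calL}\times G(\mathbb{A}^{p}_{f})/U^{p}\longrightarrow \widehat{Sh}^{ss}_{\calL, U^{p}}$$
by altering the fixed base tuple $(A_{0,\Lambda}, \lambda_{0,\Lambda}, \iota_{0,\Lambda}, \bar{\eta}_{0})$ inside its quasi-isogeny class. Given $S\in \Nilp$, a point $(X_{\Lambda}, \iota_{\Lambda}, \rho_{\Lambda}, \lambda_{\Lambda})\in \calN_{\calL}(S)$, and a coset $gU^{p}\in G(\mathbb{A}^{p}_{f})/U^{p}$, the quasi-isogeny $\rho_{\Lambda}$ identifies $X_{\Lambda}\times_{S}\bar{S}$ with the $p$-divisible group of $A_{0,\Lambda,\bar{S}}$, so by the Serre–Tate theorem there is a unique abelian scheme $A_{\Lambda}$ over $S$ lifting $A_{0,\Lambda,\bar{S}}$ whose $p$-divisible group is $X_{\Lambda}$; the polarization $\lambda_{\Lambda}$ and $\iota_{\Lambda}$ lift uniquely by Grothendieck–Messing, compatibly with all transition isogenies in the chain because the same holds for $\calL$. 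The prime-to-$p$ level structure is defined by transporting $\bar{\eta}_{0}$ by $g$, and the resulting $S$-point lands in the formal completion along the supersingular locus since $\XX$ is isoclinic of slope $1/2$.

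Next I would verify that $\Theta$ factors through the diagonal left action of $I(\QQ)$. An element $\gamma\in I(\QQ)$ is a self-quasi-isogeny of $A_{0,\Lambda}$ compatible with $\lambda_{0,\Lambda}$, $\iota_{0,\Lambda}$, so under the natural inclusions $I(\QQ)\hookrightarrow J_{b}(\QQ_{p})$ and $I(\QQ)\hookrightarrow G(\mathbb{A}^{p}_{f})$ its action is $\gamma\cdot\big((X_{\Lambda}, \iota_{\Lambda}, \rho_{\Lambda}, \lambda_{\Lambda}),\,gU^{p}\big)=\big((X_{\Lambda}, \iota_{\Lambda}, \rho_{\Lambda}\circ\gamma_{p}^{-1}, \lambda_{\Lambda}),\,\gamma\cdot gU^{p}\big).$ Replacing $\rho_{\Lambda}$ by $\rho_{\Lambda}\circ\gamma_{p}^{-1}$ while translating the prime-to-$p$ rigidification by $\gamma$ produces an isomorphic tuple $(A_{\Lambda}, \lambda_{\Lambda}, \iota_{\Lambda}, \bar{\eta})$, because $\gamma$ is a global self-isogeny; thus $\Theta$ descends to a morphism out of $I(\QQ)\backslash(\calN_{\calL}\times G(\mathbb{A}^{p}_{f})/U^{p})$, and injectivity on $\FF$-points of the quotient follows from tracing back what it means for two tuples to define the same $(A_{\Lambda}, \lambda_{\Lambda}, \iota_{\Lambda}, \bar{\eta})$.

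Finally, I would show that $\Theta$ is a formal isomorphism. Formal étaleness is immediate from Serre–Tate combined with Grothendieck–Messing for PEL structures: the deformation problems on both sides are controlled by the same data, namely the chain $X_{\Lambda}$ together with the polarizations, $\calO_{D}$-action, and Kottwitz condition — exactly the moduli problem of $\calN_{\calL}$. The essential content is therefore surjectivity on $\FF$-points, and this is the main obstacle: one must establish that any supersingular tuple $(A_{\Lambda}, \lambda_{\Lambda}, \iota_{\Lambda}, \bar{\eta})\in Sh^{ss}_{\calL, U^{p}}(\FF)$ is globally quasi-isogenous to $(A_{0,\Lambda}, \lambda_{0,\Lambda}, \iota_{0,\Lambda}, \bar{\eta}_{0})$ as a PEL abelian variety. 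Locally at $p$ this is free: both $p$-divisible groups are isoclinic of slope $1/2$ with the same endomorphism structure, hence are isogenous, producing $\rho_{\Lambda}$. Away from $p$, Tate's theorem yields a quasi-isogeny of rational Tate modules respecting all structures. The hard step is globalising these local quasi-isogenies into a $\QQ$-rational one, which reduces to the Hasse principle for the inner form $I$ of $G=\GU_{D}(2)$; since $G$ is an inner form of $\GSp(4)$ and the obstruction lives in $\ker^{1}(\QQ, I)$, which vanishes for the relevant group-theoretic reason used in \cite[\S6]{RZ96}, a global isogeny exists and yields a preimage under $\Theta$. Combining formal étaleness with bijectivity on geometric points and the properness of $\widehat{Sh}^{ss}_{\calL, U^{p}}\to\Spf(W_{0})$ after quotienting by $I(\QQ)$ (which acts with finite stabilisers and discretely on the prime-to-$p$ factor) then concludes the isomorphism.
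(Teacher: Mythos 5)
The paper does not prove this statement at all: it is imported verbatim from \cite[Theorem 6.30]{RZ96}, and the only content specific to the present situation is the (implicit) verification that the integral PEL datum of Section 5 satisfies the hypotheses of that theorem --- the moduli problem $\Sh_{\calL,U^{p}}$ is of the type considered in \cite[Chapter 6]{RZ96}, and the fixed point is basic because supersingular here means the isocrystal is isoclinic of slope $1/2$. Your proposal instead reconstructs the proof of the cited theorem, and as a sketch it follows the standard Rapoport--Zink strategy correctly: Serre--Tate plus Grothendieck--Messing to define $\Theta$ by modifying $(A_{0,\Lambda},\lambda_{0,\Lambda},\iota_{0,\Lambda},\bar\eta_{0})$ within its quasi-isogeny class, descent through the diagonal $I(\QQ)$-action, and identification of surjectivity on $\FF$-points as the essential global input. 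So the comparison is simply: the paper buys the statement by citation, you re-derive it; for the purposes of this paper the citation is the appropriate move, since nothing in the argument is specific to $\GU_{D}(2)$ beyond checking basicness.

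Two points in your sketch are stated too loosely to stand as written. First, the surjectivity step is not just ``Hasse principle for $I$'': one must first show that every supersingular point has rational \Dieu{} module, with its $D$-action and polarization, isogenous as an isocrystal with $G$-structure to the fixed basic one (this uses slope $1/2$ together with the determination of the basic class in $B(G,\mu)$), then combine Tate's theorem at $\ell\neq p$ with this $p$-adic statement, and only then does the vanishing of the relevant $\ker^{1}$ let one glue the local quasi-isogenies to a $\QQ$-rational quasi-isogeny carrying $\lambda_{\Lambda}$ to $\lambda_{0,\Lambda}$ up to a $\QQ^{\times}$-multiple; this is exactly the chain of arguments in \cite[6.26--6.30]{RZ96}, and compressing it to one sentence hides where the polarization compatibility is won. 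Second, your concluding step is garbled: there is no ``properness of $\widehat{Sh}^{ss}_{\calL,U^{p}}\rightarrow\Spf(W_{0})$ after quotienting by $I(\QQ)$'' --- the quotient is formed on the source, and one must separately check that $I(\QQ)$, which is discrete in $J_{b}(\QQ_{p})\times G(\mathbb{A}^{p}_{f})$, acts properly discontinuously (freely for $U^{p}$ small) so that the quotient exists as a formal scheme locally formally of finite type. The correct final step is that the descended morphism is adic, formally \'etale, and bijective on $\FF$-points between such formal schemes, hence an isomorphism; properness of the target plays no role.
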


The above theorem allows us to transfer the results about the structure of $\mathcal{N}_{\calL}$ to results about $Sh^{ss}_{\mathcal{L}, U^{p}}$.  To simplify the notation, we set 
\begin{equation}
\begin{split}
&Sh^{ss}_{U^{p}}=Sh^{ss}_{\mathcal{L}_{\{1\}}, U^{p}}\\
&Sh^{ss}_{P,U^{p}}=Sh^{ss}_{\mathcal{L}_{\{0,2\}}, U^{p}}\\ 
&Sh^{ss}_{\Iw, U^{p}}=Sh^{ss}_{\mathcal{L}_{\{\emptyset\}}, U^{p}}.\\
\end{split}
\end{equation}

\begin{theorem}\label{main-shim}\hfill
\begin{enumerate}
\item The scheme $Sh^{ss}_{U^{p}}$ is purely $2$-dimensional and an irreducible component of it is a surface of the form $x^{p}_{3}x_{0}-x^{p}_{0}x_{3}+x^{p}_{2}x_{1}-x^{p}_{1}x_{2}=0$.

\item The scheme $Sh^{ss}_{P, U^{p}}$ is purely $2$-dimensional and an irreducible component of it is the ruled surface $\PP^{1}\times \PP^{1}$.

\item The scheme $Sh^{ss}_{\Iw, U^{p}}$ has both $2$-dimensional components and $3$-dimensional components. The $2$-dimensional components are of the form $\PP^{1}\times \PP^{1}$ and a $3$-dimensional component is a $\PP^{1}$-bundle over the surface $x^{p}_{3}x_{0}-x^{p}_{0}x_{3}+x^{p}_{2}x_{1}-x^{p}_{1}x_{2}=0$. In particular since the dimension of $Sh_{\Iw, U^{p}}$ is $3$, there are irreducible components of it that are purely supersingular and thus the ordinary locus of $Sh_{\Iw, U^{p}}$ is not dense. 

\end{enumerate}
\end{theorem}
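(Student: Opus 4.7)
The plan is to derive each of the three statements from the Rapoport-Zink uniformization theorem (Theorem \ref{RZ-unif}), using the local structural results of Theorem \ref{main-thm}, Theorem \ref{main-app} and the main theorem of \cite{Wang-2}. The first step is passage from the formal uniformization to the underlying reduced scheme. Taking the reduced subscheme on both sides of
$$I(\QQ)\backslash \mathcal{N}_{\mathcal{L}}\times G(\mathbb{A}^{p}_{f})/ U^{p} \cong \widehat{Sh}^{ss}_{\mathcal{L}, U^{p}},$$
and recalling that $\calN_{\calL}=\bigsqcup_{i\in\ZZ}\calN^{(i)}_{\calL}$ with $\calN^{(i)}_{\calL}\cong \calN^{(0)}_{\calL}$, one obtains $Sh^{ss}_{\calL,U^p}$ as a quotient of a disjoint union of copies of $\calM_{\calL}\times G(\mathbb{A}^{p}_{f})/U^{p}$ by the discrete action of $I(\QQ)$. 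Since $U^p$ is sufficiently small the action factors through a quotient acting freely on irreducible components and preserving dimension, so Zariski-locally the supersingular locus looks like $\calM_{\calL}$.

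For part (1), the theorem of \cite{Wang-2} recorded in the introduction gives $\calM$ as pure of dimension two with irreducible components isomorphic to the Fermat-type surface $x^{p}_{3}x_{0}-x^{p}_{0}x_{3}+x^{p}_{2}x_{1}-x^{p}_{1}x_{2}=0$, and this transports directly. For part (2), Theorem \ref{main-app} identifies $\calM_{P}$ with a disjoint union of $\PP^{1}\times\PP^{1}$'s indexed by $\calM_{\{1\}}$, which immediately yields purity of dimension two and the claim on irreducible components of $Sh^{ss}_{P,U^p}$. For part (3), one reads off Theorem \ref{main-thm} and Corollary \ref{main-cor}: the components $\overline{\calY}_{L_0}$ and $\overline{\calY}_{L_2}$ are three-dimensional $\PP^1$-bundles over the irreducible surfaces of \cite{Wang-2}, while the components $\calY_{L_1}$ are isomorphic to $\PP^1\times\PP^1$ and have dimension two. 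Hence $Sh^{ss}_{\Iw,U^p}$ is genuinely of mixed dimension.

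The final assertion about the ordinary locus requires a small additional argument: since $\dim Sh_{\Iw,U^p}=3$ and the three-dimensional supersingular components $\overline{\calY}_{L_0}, \overline{\calY}_{L_2}$ are proper, they must be unions of whole irreducible components of $Sh_{\Iw,U^p}$; their complement does not contain them, so the ordinary locus (which is entirely disjoint from $Sh^{ss}_{\Iw,U^p}$) cannot be dense.

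The main technical obstacle, modest but worth flagging, is the transition from the formal uniformization (which is only an isomorphism of formal schemes along $Sh^{ss}_{\calL,U^p}$) to assertions about global irreducible components and their dimensions in $Sh^{ss}_{\calL,U^p}$. This requires verifying that $I(\QQ)$ acts on $\calN_{\calL}$ through the subgroup preserving each connected component in a way that the quotient $I(\QQ)\backslash(\calN_{\calL}\times G(\mathbb{A}^{p}_{f})/U^{p})$ is representable by a scheme, and that the resulting scheme is \emph{locally} isomorphic to $\calM_{\calL}$; this is standard (see \cite[\S 6]{RZ96}) and exploits the fact that $I(\QQ_p)=J_b(\QQ_p)$ acts by shifting the index $i$ in $\calN_{\calL}^{(i)}$, so $I(\QQ)\cap U^p\Stab(\calN_{\calL}^{(0)})$ is a discrete subgroup acting properly discontinuously. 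Once this is in place, the three parts of the theorem follow formally.
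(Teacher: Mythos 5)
Your proposal is correct and follows essentially the same route as the paper: the paper likewise deduces all three parts directly from the Rapoport--Zink uniformization theorem (Theorem \ref{RZ-unif}) combined with Theorem \ref{main-thm}, Theorem \ref{main-app} and the description of $\calM$ from \cite{Wang-2}. The extra care you take with the passage from the formal uniformization to the reduced scheme and with the action of $I(\QQ)$ is standard material from \cite[\S 6]{RZ96} that the paper leaves implicit.
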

\begin{proof}
This follows directly using the Rapoport-Zink uniformization theorem Theorem \ref{RZ-unif} and Theorem \ref{main-thm} and Theorem \ref{main-app}.
\end{proof}

Recall that there are two types of irreducible components of $Sh^{ss}_{U^{p}}$ corresponding to vertex lattices of type $0$ and $2$ respectively, we will refer to them as type $0$ components and type $2$ components. By Theorem \ref{main-shim}, the irreducible components of $Sh^{ss}_{\Iw, U^{p}}$ of dimension $3$ are $\PP^{1}$-bundles over the irreducible components of $Sh^{ss}_{U^{p}}$ and therefore are naturally divided into two types which we still refer to as type $0$ and $2$ components. We also see that there are additional two dimensional components of $Sh^{ss}_{\Iw, U^{p}}$ which we will refer to as type $1$ component.  Let $P_{\{i\}}$ be the stabilizer in $J_{b}(\QQ_{p})$ of a fixed vertex lattice of type $i$ for $i=0, 1, 2$. We easily obtain the following parametrization of irreducible components of $Sh^{ss}_{\mathcal{L},U^{p}}$. We remark that those double cosets appear below can be regarded as discrete Shimura varieties or Shimura set and should have arithmetic applications related to geometrically realizing Jacquet-Langlands correspondence for symplectic groups of degree $4$. 

\begin{proposition}
For the paramodular parahoric case 
\begin{itemize}
\item There is an one-to-one correspondence 
$$\{\text{irreducible components of $Sh^{ss}_{ U^{p}}$ of type $0$}\}\longleftrightarrow I(\QQ)\backslash J_{b}(\QQ_{p})/P_{\{0\}}\times G(\mathbb{A}^{p}_{f})/U^{p}.$$
\item There is an one-to-one correspondence 
$$\{\text{irreducible components of $Sh^{ss}_{U^{p}}$ of type $2$}\}\longleftrightarrow I(\QQ)\backslash J_{b}(\QQ_{p})/P_{\{2\}}\times G(\mathbb{A}^{p}_{f})/U^{p}.$$
\end{itemize}

For the Siegel parahoric case
\begin{itemize}
\item There is an one-to-one correspondence 
$$\{\text{irreducible components of $Sh^{ss}_{P, U^{p}}$}\}\longleftrightarrow I(\QQ)\backslash J_{b}(\QQ_{p})/P_{\{1\}}\times G(\mathbb{A}^{p}_{f})/U^{p}.$$
\end{itemize}
For the Iwahori case
\begin{itemize}
\item There is an one-to-one correspondence 
$$\{\text{irreducible components of $Sh^{ss}_{\Iw, U^{p}}$ of type $0$}\}\longleftrightarrow I(\QQ)\backslash J_{b}(\QQ_{p})/P_{\{0\}}\times G(\mathbb{A}^{p}_{f})/U^{p}.$$
\item There is an one-to-one correspondence 
$$\{\text{irreducible components of $Sh^{ss}_{\Iw, U^{p}}$ of type $2$}\}\longleftrightarrow I(\QQ)\backslash J_{b}(\QQ_{p})/P_{\{2\}}\times G(\mathbb{A}^{p}_{f})/U^{p}.$$
\item There is an one-to-one correspondence 
$$\{\text{irreducible components of $Sh^{ss}_{\Iw, U^{p}}$ of type $1$}\}\longleftrightarrow I(\QQ)\backslash J_{b}(\QQ_{p})/P_{\{1\}}\times G(\mathbb{A}^{p}_{f})/U^{p}.$$
\end{itemize}
\end{proposition}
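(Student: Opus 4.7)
\medskip

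The plan is to derive all three cases from the Rapoport--Zink uniformization theorem (Theorem~\ref{RZ-unif}) combined with the component descriptions in Theorem~\ref{main-thm}, Theorem~\ref{main-app}, and the prior description of $\calM$. First I would note that the uniformization isomorphism $I(\QQ)\backslash \calN_{\calL}\times G(\AA^{p}_{f})/U^{p}\cong \widehat{Sh}^{ss}_{\calL,U^{p}}$ induces a bijection between irreducible components of the two sides (both sides are of finite type after quotienting, and $I(\QQ)$ acts on the left by permuting the components). Since $\calN_{\calL}=\bigsqcup_{i\in \ZZ}\calN^{(i)}_{\calL}$ and $\calN^{(i)}_{\calL}\cong \calN^{(0)}_{\calL}$ via translation by a suitably chosen element of $J_{b}(\QQ_{p})$ of similitude valuation $i$, the irreducible components of the whole $\calN_{\calL}$ are obtained by taking $J_{b}(\QQ_{p})$-translates of the irreducible components of $\calM_{\calL}$.

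Next I would fix the type $i\in \{0,1,2\}$ and identify the set of irreducible components of $\calN_{\calL}$ of type $i$ with $J_{b}(\QQ_{p})/P_{\{i\}}$. For this one has to check two things: (a) $J_{b}(\QQ_{p})$ acts transitively on the set of vertex lattices of type $i$ in the Bruhat--Tits building $\calB(J_{b}^{\der})$; (b) the stabilizer of a chosen vertex lattice $L_{i}$ in $J_{b}(\QQ_{p})$ equals $P_{\{i\}}$ by definition. The transitivity in (a) follows from the fact that $J_{b}^{\der}\cong \Sp(C)$ is a simply connected semisimple group, so its action on each $J_{b}^{\der}(\QQ_{p})$-orbit of vertices of fixed type is transitive, and the similitude group enlarges this orbit appropriately. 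Once this identification is in place, the $J_{b}(\QQ_{p})$-translates of the distinguished irreducible component $\overline{\calY}_{L_{i}}$ or $\calY_{L_{i}}$ in $\calM_{\calL}$ (with $\calL=\calL_{\emptyset}$ or $\calL_{\{0,2\}}$) range, as $i$ varies over $\ZZ$, over all irreducible components of $\calN_{\calL}$ of the given type, and two such translates coincide iff the group elements differ by an element of $P_{\{i\}}$.

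Combining these two steps: the set of irreducible components of $\calN_{\calL}\times G(\AA^{p}_{f})/U^{p}$ of type $i$ is in canonical bijection with $J_{b}(\QQ_{p})/P_{\{i\}}\times G(\AA^{p}_{f})/U^{p}$, and quotienting by $I(\QQ)$ (which acts on $\calN_{\calL}$ through its image in $J_{b}(\QQ_{p})\cong I(\QQ_{p})$ and diagonally on the other factor through $G(\AA^{p}_{f})$) yields $I(\QQ)\backslash J_{b}(\QQ_{p})/P_{\{i\}}\times G(\AA^{p}_{f})/U^{p}$. This gives all three cases simultaneously: for the paramodular case only $i\in\{0,2\}$ contribute because Theorem~4.4 of \cite{Wang-2} identifies the irreducible components of $\calM$ as those of types $0$ and $2$; for the Siegel parahoric case Theorem~\ref{main-app} shows that only type $1$ contributes; and for the Iwahori case Theorem~\ref{main-thm} together with Corollary~\ref{main-cor} shows that all three types $\overline{\calY}_{L_{0}}$, $\overline{\calY}_{L_{2}}$, $\calY_{L_{1}}$ arise as the three distinct types of irreducible components.

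The main obstacle I anticipate is the careful bookkeeping in step two, namely matching the parametrization of irreducible components given by Theorem~\ref{main-thm} (in terms of vertex lattices in the building) with the transitive $J_{b}(\QQ_{p})$-action and its stabilizers, and verifying that the $\ZZ$-indexing of connected components $\calN^{(i)}_{\calL}$ is absorbed correctly when passing to the $I(\QQ)$-quotient. One must ensure that the subgroup $I(\QQ)\subset J_{b}(\QQ_{p})\cong \GSp(C)$ is dense enough to account for all similitude valuations, so that the double coset $I(\QQ)\backslash J_{b}(\QQ_{p})/P_{\{i\}}$ really parametrizes the orbits on $\bigsqcup_{i}\calN^{(i)}_{\calL}$; this uses strong approximation for the simply connected derived group together with the surjectivity of the similitude map, exactly as in the paramodular analysis of \cite{KR00}.
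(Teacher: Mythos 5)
Your proposal is correct and takes essentially the same approach as the paper, whose proof simply cites the Rapoport--Zink uniformization theorem (Theorem \ref{RZ-unif}) together with Theorems \ref{main-thm} and \ref{main-app} and leaves exactly your orbit--stabilizer bookkeeping (transitivity on vertex lattices of each type, stabilizer $P_{\{i\}}$, and the translation between the $\calN^{(i)}_{\calL}$) implicit. The only quibble is that your closing appeal to strong approximation is unnecessary: the set $I(\QQ)\backslash\bigl(J_{b}(\QQ_{p})/P_{\{i\}}\times G(\AAA^{p}_{f})/U^{p}\bigr)$ is by definition the set of $I(\QQ)$-orbits, so no density property of $I(\QQ)$ inside $J_{b}(\QQ_{p})$ is needed for the stated bijection.
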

\begin{proof}
This follows immediately the Rapoport-Zink uniformization theorem Theorem \ref{RZ-unif} and Theorem \ref{main-thm} and Theorem \ref{main-app}.
\end{proof}

\end{document}